\newcounter{1}
\newtheorem*{claim}{Claim}
\newtheorem{thm2}{Theorem}
\newtheorem{thm}{Theorem}[section]
\newtheorem{cor}[thm]{Corollary}
\newtheorem{lem}[thm]{Lemma}
\newtheorem{prop}[thm]{Proposition}
\newtheorem{conj}[thm]{Conjecture}
\theoremstyle{definition}
\newtheorem{defn}[thm]{Definition}
\newtheorem{ex}[thm]{Example}
\newtheorem{rmk}[thm]{Remark}
 \DeclareMathOperator{\Spec}{Spec}
\DeclareMathOperator{\Gal}{Gal} 
\DeclareMathOperator{\End}{End} 
\DeclareMathOperator{\Hom}{Hom} 
 \DeclareMathOperator{\rk}{rk}
\newcommand{\C}{\ensuremath\mathds{C}}
\newcommand{\R}{\ensuremath\mathds{R}}
\newcommand{\Z}{\ensuremath\mathds{Z}}
\newcommand{\Q}{\ensuremath\mathds{Q}}
\newcommand{\HH}{\ensuremath\mathrm{H}}
\newcommand{\CH}{\ensuremath\mathrm{CH}}
\newcommand{\h}{\ensuremath\mathfrak{h}}
\newcommand{\op}{\ensuremath\mathrm{op}}
\newcommand{\B}{\ensuremath\mathrm{B}}
\newcommand{\dR}{\ensuremath\mathrm{dR}}
\newcommand{\dRB}{\ensuremath\mathrm{dRB}}
\newcommand{\mot}{\ensuremath\mathrm{mot}}
\renewcommand{\And}{\ensuremath\mathrm{And}}
\newcommand{\an}{\ensuremath\mathrm{an}}
\newif\ifHideFoot
\newcommand{\Charles}[1]{}
\newcommand{\Mingmin}[1]{}
\newcommand{\Tobias}[1]{}
\newcommand{\marg}[1]{\normalsize{{
			\color{red}\footnote{{\color{blue}#1}}}{\marginpar[\vskip
			-.25cm{\color{red}\hfill$\Rightarrow$\tiny\thefootnote}]{\vskip
				-.2cm{\color{red}$\Leftarrow$\tiny\thefootnote}}}}}
\newcommand{\Mingmin}[1]{\marg{(Mingmin) #1}}
\newcommand{\Charles}[1]{\marg{(Charles) #1}}
\newcommand{\Tobias}[1]{\marg{(Tobias) #1}}
	\def\MR#1{}
\begin{document}
	
	\title[$L$-de\,Rham--Betti classes]{De\,Rham--Betti classes with coefficients}
    \author{Tobias Kreutz}
	\author{Mingmin Shen}
	\author{Charles Vial}
	
	\thanks{2010 {\em Mathematics Subject Classification.} 14C15, 14C25, 14C34, 14F40, 14G25, 14J42}
	
	
	\thanks{The research of M.Sh.\ was supported by the NWO under grant number 016.Vidi.189.015. The research of Ch.V.\ was funded by the Deutsche Forschungsgemeinschaft (DFG, German Research Foundation) -- Project-ID 491392403 - TRR 358.}

	
%
%

	\date{\today}
	
	\begin{abstract} Let $K$ and $L$ be algebraic extensions of the rational numbers inside the field of complex numbers.
An $L$-\emph{de\,Rham--Betti class} on a smooth projective variety $X$ over 
$K$ 
 is a 
class in the Betti cohomology with $L$-coefficients of the analytification of~$X$ that descends to a class in the algebraic de Rham cohomology of $X$ via the period comparison isomorphism.
The period conjecture of Grothendieck implies that $L$-de\,Rham--Betti classes should be $L$-linear combinations of algebraic cycle classes.
We prove that $L$-de\,Rham--Betti classes on products of elliptic curves are $L$-linear combinations of algebraic classes, provided $L$ contains at most one of the CM 
fields associated with the CM elliptic curves involved in the product. 
A key step consists in establishing a version of the analytic subgroup theorem with $L$-coefficients.
Moreover, building on results of Deligne and Andr\'e regarding the Kuga--Satake correspondence, 
 we show that codimension-2 $L$-de\,Rham--Betti classes on hyper-K\"ahler varieties of known deformation type are $L$-linear combinations of
motivated cycles, and we obtain a global de\,Rham--Betti Torelli theorem for K3 surfaces defined over the algebraic numbers.
	\end{abstract}
	\maketitle

\vspace{-25pt}
	\section{Introduction}

Throughout this paper, $\overline \Q$ denotes the algebraic closure of $\Q$ inside $\C$ and we let $K$ and $L$ be subfields of $\overline \Q$. 
\medskip

\paragraph{\textbf{$L$-de\,Rham--Betti classes.}}
Let $X$ be a smooth projective variety defined over $K$.
Serre's GAGA and the analytic Poincar\'e lemma provide a canonical isomorphism
\begin{equation}\label{E:comparison}
c^n_X : \HH^{n}_\dR(X/K) \otimes_K \C \stackrel{\simeq}{\longrightarrow} \HH^{n}_\B(X_\C^\an,\Q)\otimes_\Q \C,
\end{equation}
called the \emph{period comparison isomorphism}, between the algebraic de\,Rham cohomology of $X_\C$ and the Betti cohomology with $\C$-coefficients of the analytification $X_\C^\an$ of $X_\C$. Writing as is usual $\Q(k) =_{\mathrm{def}} (2\pi i)^{k}\Q \subset \C$, the \emph{de\,Rham--Betti cohomology} of $X$ is defined as the triple
$$\HH^n_\dRB(X,\Q(k)) =_{\mathrm{def}} \big( \HH^{n}_\dR(X/K),  \HH^{n}_\B(X_\C^\an,\Q(k)),  c_X^n\big).$$
 If $Z$ denotes an algebraic cycle of codimension~$k$ on~$X$, then by e.g.\ \cite[Prop.~1.1]{BostCharles} its classes in de\,Rham cohomology $ \HH^{2k}_\dR(X/K)$ and in Betti cohomology $\HH^{2k}_\B(X_\C^\an,\Q(k))$ are related, up to a sign, by 
\begin{equation}\label{E:cycleclass}
	c^{2k}_X\big(\mathrm{cl}_\dR(Z)\otimes_K 1_\C \big) = \mathrm{cl}_\B(Z) \otimes_{\Q} 1_\C.
\end{equation}
An $L$-\emph{de\,Rham--Betti class} in $\HH^n_\dRB(X,\Q(k))$
consists of a pair $(\alpha_\dR,\alpha_\B)$ with $\alpha_\dR\in \HH^{n}_\dR(X/K)$  and $\alpha_B \in\HH^{n}_\B(X_\C^\an,\Q(k))\otimes_\Q L$ 
whose complexifications correspond to one another under $c_X$\,; see Definitions~\ref{D:dRBobject} and~\ref{D:dRB}.  
By \eqref{E:cycleclass}, the Betti class and the de~Rham class of an algebraic cycle of codimension $k$ define a $\Q$-de\,Rham--Betti class in $\HH^{2k}_\dRB(X,\Q(k))$.

A consequence of the Grothendieck Period Conjecture is that $L$-de\,Rham--Betti classes in $\HH_{\dRB}^j(X,\Q(k))$ are 
$L$-linear combinations of $\Q$-de\,Rham--Betti classes\,; see Proposition~\ref{P:GPC-LdRB}. It turns out that even this statement is out of reach in general. It cannot be formal, as it fails for general de\,Rham--Betti objects\,; see Example~\ref{Ex:dRB-Qbar}.
Phenomena for which fields of coefficients play a non-trivial role are not uncommon.
For instance, 
it is expected from the standard conjectures, but still unclear in positive characteristic, that the $\Q_\ell$-span of algebraic classes in $\ell$-adic cohomology is isomorphic to the $\Q$-span of algebraic classes in $\ell$-adic cohomology tensored with~$\Q_\ell$\,; see \cite{Clozel}, \cite[Rmk.~6.9]{Ancona} and \cite{Ancona-Fratila}.
The subtlety of working with de\,Rham--Betti classes with coefficients also arises in the theory of Shimura periods for CM abelian varieties\,:
monomial relations among Shimura periods on CM abelian varieties are governed exactly by $\overline \Q$-de\,Rham--Betti classes,
so that the $\overline \Q$-algebraicity of $\overline{\Q}$-de\,Rham--Betti classes on products of CM abelian varieties implies that all monomial relations among Shimura periods are generated by the so-called monomial Shimura relations\,; see \S \ref{SS:Shimura}.
All these considerations underscore the importance of a systematic study of 
de\,Rham--Betti classes with coefficients and their behavior under extensions of scalar coefficients.

 The Grothendieck Period Conjecture, as formulated in Conjecture~\ref{C:GPC}, implies in fact that  $L$-de\,Rham--Betti classes in $\HH_{\dRB}^j(X,\Q(k))$ are 
\emph{$L$-algebraic}, i.e., are $L$-linear combinations of algebraic classes. 
It remains wide open\,: 
it is essentially known in the case of varieties whose motives are direct sums of Lefschetz motives (transcendence of $\pi$), and in the case of CM elliptic curves due to Chudnovsky~\cite{Chudnovsky}.
Nonetheless, the algebraicity of  $\Q$-de\,Rham--Betti classes could be established
  in some cases without establishing the Grothendieck Period Conjecture.
 Here is an exhaustive list\,:  
 for  $\Q$-de\,Rham--Betti classes in $\HH_{\dRB}^2(X,\Q(1))$ for $X$ an abelian variety~\cite{AndreBook, Bost, BostCharles} 
 as an application of W\"ustholz's analytic subgroup theorem~\cite{Wuestholz} and, 
 via the Kuga--Satake correspondence of Andr\'e~\cite{AndreK3}, 
 for  $X$ a hyper-K\"ahler variety~\cite{BostCharles}\,; 
 see Theorem~\ref{T:dRB1A} and Proposition~\ref{P:BC}.
The aim of this work is to extend the above results to $L$-de\,Rham--Betti classes and to higher codimension classes.
\medskip

\paragraph{\textbf{$L$-de\,Rham--Betti classes of codimension~1 on abelian varieties}}
As a first result, which will be instrumental for understanding $L$-de\,Rham--Betti classes on abelian varieties and hyper-K\"ahler varieties,
we establish a version with $L$-coefficients of W\"ustholz's analytic subgroup theorem (Proposition~\ref{Qbarsubgroupmotives}) and as a consequence show\,:
\begin{thm2}[Corollary~\ref{cor:LdRB-abelian}]\label{T:LdRB-abelian}
	Let $A$ be an abelian variety over $K$. Then\,:
		\begin{enumerate}[(i)]
		\setcounter{enumi}{\value{1}}
		\item any $L$-de\,Rham--Betti class in $\HH^{2}_{\dRB}(A,\Q(1))$ is $L$-algebraic\,;
		\item the $L$-de\,Rham--Betti object $\HH^{1}_{\dRB}(A,L(k))$ is semi-simple for all $k\in \Z$. 
	\end{enumerate}
\end{thm2}
We refer to 
\S \ref{SS:dRBdef} for the notion of $L$-de\,Rham--Betti object. 
\medskip

\paragraph{\textbf{$L$-De\,Rham--Betti classes on products of elliptic curves}} 
Our next result, parts of which seem to be known to experts (see, e.g., \cite[footnote 12]{Andre} 
where Andr\'e mentions that Theorem~\ref{T:main-elliptic} can be proved for $\Q$-de\,Rham--Betti classes on powers of a non-CM elliptic curve), 
is the following\,:

\begin{thm2}[Theorem~\ref{T:dRB-elliptic}]\label{T:main-elliptic}
	Let $X$ be an abelian variety over $K$ such that $X_{\overline \Q}$ is isogenous to a product of elliptic curves $E_i$.
	Assume that $L$ contains at most one of the CM fields associated with the CM elliptic curves among the elliptic curves $E_i$.
	Then\,:
	\begin{enumerate}[(i)]
		\setcounter{enumi}{\value{1}}
		\item any $L$-de\,Rham--Betti class in $\HH^{2k}_{\dRB}(X,\Q(k))$ is $L$-algebraic for all $k$\,;
		\item any $L$-de\,Rham--Betti class in $\HH^{j}_{\dRB}(X,\Q(k))$ is zero for $j\neq 2k$.
	\end{enumerate}
\end{thm2}

We refer also to  Theorem~\ref{T:abelian-surfaces} for the case of powers of abelian surfaces whose base-change to~$\overline \Q$ have non-trivial endomorphism ring. 
The reason we have to set restrictions on the field of coefficients $L$ is outlined in Remark~\ref{rmk:elliptic}.
Theorem~\ref{T:main-elliptic} is the analogue in the de\,Rham--Betti setting of the following results.
\begin{enumerate}[(a)]
	\item The Hodge conjecture holds for products of complex elliptic curves. Its proof essentially goes back to Tate (unpublished)\,; see \cite[\S 3]{Hodge-survey} for a proof and further references.
	\item The Tate conjecture holds for products of elliptic curves over a finitely generated extension of $\Q$. This follows from the validity of the Mumford--Tate conjecture for products of elliptic curves. The latter is established in \cite[Cor.~1.2]{Lombardo} and builds on the validity of the Mumford--Tate conjecture for elliptic curves due to Serre~\cite{Serre}.
	\item The Tate conjecture holds for products of elliptic curves over a finite field. This is due to Spie\ss~\cite{Spiess}.
\end{enumerate}

The proof of Theorem~\ref{T:main-elliptic} uses the Tannakian formalism\,:
 it consists in showing the stronger statement that 
the \emph{$L$-de\,Rham--Betti torsor of periods} agrees with the \emph{motivic torsor of periods}, i.e., 
that the de\,Rham--Betti conjecture holds for $X_{\overline \Q}$ with $L$-coefficients (Conjecture~\ref{C:dRB}).
Crucial ingredients include the connectedness of the $L$-de\,Rham--Betti torsor of periods (Theorem~\ref{T:connectedness}) and the aforementioned W\"ustholz's analytic subgroup theorem with $L$-coefficients.
A difficulty is that, in contrast to the Hodge setting and the Mumford--Tate group, 
there is \emph{a priori} no theory of weights in the de\,Rham--Betti setting, i.e., it is not clear that the Tannakian $L$-de\,Rham--Betti group contains the scalar matrices.
For instance, for a smooth projective variety $X$ over~$K$, it is not known in general that   $\HH^{j}_{\dRB}(X,\Q(k))$ does not support any non-zero $L$-de\,Rham--Betti class for $j\neq 2k$. 
 Theorem~\ref{T:main-elliptic}$(ii)$ confirms that this is indeed the case for products of elliptic curves in case $L=\Q$. 
The only general results that were known so far in that direction are the following\,:
\begin{enumerate}[(a)]
	\item Any $\Q$-de\,Rham--Betti class in   $\HH^{0}_{\dRB}(X,\Q(k))$ is zero for $k\neq 0$\,; this is equivalent to the transcendence of $\pi$.
	\item Any $\Q$-de\,Rham--Betti class in   $\HH^{1}_{\dRB}(X,\Q(k))$ is zero for
	$k=0$ and $k=1$ by \cite[Thms.~4.1 \& 4.2]{BostCharles}.
	We observe in Theorem~\ref{T:H1} that W\"ustholz's analytic subgroup theorem in fact implies that any de\,Rham--Betti class in   $\HH^{1}_{\dRB}(X,\Q(k))$ is zero for any $k\in \Z$. 
\end{enumerate}
\medskip

\paragraph{\textbf{$L$-de\,Rham--Betti classes on hyper-K\"ahler varieties}}
Let $X$ be a polarized {hyper-K\"ahler variety} over~$K$.
Here a \emph{hyper-K\"ahler variety} over~$K$ means a variety over~$K$ whose base-change to $\C$ is projective, irreducible holomorphic symplectic and, deviating from the usual definition, is such that its second Betti number is at least~3.
We denote $ \mathrm{P}^2_\dRB(X,\Q)$ its de\,Rham--Betti \emph{primitive cohomology} -- it is the orthogonal complement to the class of the polarization -- and $ \mathrm{P}^2_\dRB(X,L)$ its base-change on the Betti side to $L$. 
Theorem~\ref{T:LdRB-abelian}, together with Andr\'e's result \cite{AndreK3} that the Kuga--Satake correspondence is motivated and defined over~$\overline \Q$, implies easily that $L$-de\,Rham--Betti classes in $\HH_\dRB^2(X,\Q(1))$ are $L$-algebraic\,; see Proposition~\ref{P:BC}. 
Our aim is to extend such results to higher-codimensional $L$-de\,Rham--Betti classes.

\begin{thm2}[special instance of Theorem~\ref{P:GdRBisometry}]
	\label{T:main-dRBisometry}
	Let $X$ and $X'$ be hyper-K\"ahler varieties over $K$.
	Any $L$-de\,Rham--Betti isometry 
	$$ \mathrm{P}^2_\dRB(X,L) \stackrel{\sim}{\longrightarrow} \mathrm{P}^2_{\dRB}(X',L)$$
	is  $L$-motivated.
\end{thm2}

Combining Theorem~\ref{T:main-dRBisometry} with the Hodge-theoretic Torelli theorem for K3 surfaces, we obtain\,:

\begin{thm2}[Global de\,Rham--Betti Torelli theorem for K3 surfaces over $\overline \Q$\,; see Theorem~\ref{T2:dRB-Torelli}]
	\label{T:dRB-Torelli}
	Let $S$ and $S'$ be two K3 surfaces over $\overline{\Q}$. If there exists  an integral de\,Rham--Betti class in $\HH^4_\dRB(S \times S',\Z(2))$ inducing an isometry
	$$\HH^2_\dRB(S,\Z) \stackrel{\sim}{\longrightarrow} \HH^2_\dRB(S',\Z),$$
	then $S$ and $S'$ are isomorphic.
\end{thm2}

A hyper-K\"ahler variety will be said to be \emph{of known deformation type} if its analytification is deformation equivalent to the Hilbert scheme of points on a K3 surface, a generalized Kummer variety, or one of O'Grady's two sporadic examples (all of these have second Betti number larger than 3).
Building on Theorem~\ref{T:main-dRBisometry}, we obtain the following.

\begin{thm2}[Proposition~\ref{P:dRB-tensor2} and  Theorem~\ref{T:dRB-cod2}] 
	\label{T:main-dRB}
	Let $X$ be a hyper-K\"ahler variety over $K$
	  and let $n\in \Z_{>0}$. 
	 Then\,:
	\begin{enumerate}[(i)]
		\item any $L$-de\,Rham--Betti class in $\HH^2_\dRB(X,\Q(1))\otimes \HH^2_\dRB(X, \Q(1))$ is $L$-motivated.
			\setcounter{1}{\value{enumi}}
	\end{enumerate}
If in addition $X$ is of known deformation type, then\,:
\begin{enumerate}[(i)]
\setcounter{enumi}{\value{1}}
		\item  any $L$-de\,Rham--Betti class in $\HH^4_\dRB(X,\Q(2))$ is  $L$-motivated.
	\end{enumerate}
\end{thm2}

As a first consequence of Theorem~\ref{T:main-dRB}$(i)$, we obtain in Corollary~\ref{C:T2} that, for any $k\in \Z$, any $\Q$-de\,Rham--Betti class in $\mathrm{H}^2_\dRB(X, \Q(k))$ is algebraic and thus zero for $k\neq 1$. 
Furthermore, 
we can derive from Theorem~\ref{T:main-dRB} that de\,Rham--Betti classes of any codimension on hyper-K\"ahler varieties of large Picard rank are motivated\,:

\begin{thm2}\label{T:main-GPC-CM}
Let $X$ be a hyper-K\"ahler variety over $K$ of known deformation type. Denote by $\rho^c(X)$ the \emph{Picard corank} of $X$, that is, $\rho^c(X) =_{\mathrm{def}} h^{1,1}(X_\C^\an) - \rho(X_\C)$ where $\rho(X_\C)$ is the Picard rank of $X_\C$.
\begin{enumerate}[(i)]
	\item  If $\rho^c(X)\leq 1$, then any $L$-de\,Rham--Betti class in $\HH^{j}_{\dRB}(X^n,\Q(k))$ is $L$-motivated.
	\item  If $\rho^c(X)=2$, then any $\Q$-de\,Rham--Betti class in $\HH^{j}_{\dRB}(X^n, \Q(k))$ is motivated.
\end{enumerate}
\end{thm2}

We refer to the more general (due to Proposition~\ref{P:torsor-fullyfaithful}) Theorem~\ref{T:main-GPC-maxPic}.
\medskip

\paragraph{\textbf{Outline}} In an effort to distinguish formal properties of de\,Rham--Betti objects with properties of geometric de\,Rham--Betti objects, we will focus first in \S \ref{S:dRB} and~ \S\ref{S:dRB-barQ} exclusively on general de\,Rham--Betti objects (with various fields of coefficients, both on the de\,Rham and the Betti side), and only starting with \S\ref{S:GPC} outline which properties are expected, or have been established, in the geometric setting. 
We then explore the algebraicity of $L$-de\,Rham--Betti classes for abelian varieties in \S \ref{S:ab} and for hyper-K\"ahler varieties in \S \ref{S:cod2}.
\medskip

\paragraph{\textbf{Acknowledgments}} Thanks to Giuseppe Ancona, Yves Andr\'e, Joseph Ayoub and Francis Brown for useful comments and/or discussions. Ch.V.\ is grateful to All Souls College, Oxford, for a Visiting Fellowship in 2025 providing excellent working conditions.
\medskip

\paragraph{\textbf{Note}} After the first version of this work appeared on the arXiv, Bruno Kahn uploaded to the arXiv a paper containing another proof of Theorem~\ref{T:main-elliptic}$(i)$ in the special case $L=\Q$\,; this work is now published as \cite{kahn}.
\medskip

\paragraph{\textbf{Dedication}} Tobias Kreutz passed away in August 2024. 
M.Sh.\ and Ch.V.\ have fond memories of the times spent together with Tobias in Amsterdam and Bielefeld collaborating on this work. His insights and passion for mathematics are profoundly missed.

\section{De\,Rham--Betti objects}\label{S:dRB}

\subsection{The Tannakian category of de\,Rham--Betti objects} 
\label{SS:dRBdef}

\begin{defn}[De\,Rham--Betti objects]
	\label{D:dRBobject}
The category of \emph{$L$-de\,Rham--Betti objects over $K$} is the $(K\cap L)$-linear category $\mathscr C_{\dRB, K_\dR, L_\B}$ whose objects $M$
are triples of the form $$M = (M_{\dR}, M_\B, c_M),$$ where $M_{\dR}$ is a finite-dimensional $K$-vector space, $M_\B$ is a finite-dimensional $L$-vector space and $c_M : M_{\dR}\otimes_K \C \to M_\B \otimes_{L} \C$ is a
$\C$-linear isomorphism. 
A \emph{de\,Rham--Betti homomorphism} $f\in \Hom_{\dRB}(M,N)$ between $L$-de\,Rham--Betti objects over $K$ consists of a $K$-linear map $f_\dR : M_\dR \to N_\dR$ together with an $L$-linear map $f_\B : M_\B \to N_\B$ such that the diagram
$$\xymatrix{M_\dR \otimes_K \C \ar[d]_{c_M} \ar[rr]^{f_\dR \otimes_K \mathrm{id}_\C} && N_\dR \otimes_K \C \ar[d]^{c_N} \\
M_\B \otimes_L \C \ar[rr]^{f_\B\otimes_{L}\mathrm{id}_\C} & & N_\B \otimes_L \C}$$
commutes.

For any $k\in \Z$, we denote $\mathds 1(k)$ the object of $\mathscr C_{\dRB,K_\dR,L_\B}$ defined by
$$\mathds{1}(k)_\dR := K, \quad \mathds{1}(k)_\B := (2\pi i)^{k}L, \quad \mbox{and} \quad c_{\mathds{1}(k)} : \C \to \C, z\mapsto  z$$
or equivalently by
$$\mathds{1}(k)_\dR := K, \quad \mathds{1}(k)_\B := L, \quad \mbox{and} \quad c_{\mathds{1}(k)} : \C \to \C, z\mapsto (2\pi i)^{-k} z.$$
The category $\mathscr C_{\dRB, K_\dR, L_\B}$ is then naturally an abelian rigid $\otimes$-category, with unit object $\mathds 1 := \mathds 1(0)$. 
The two natural forgetful functors
$$\omega_\dR \colon\mathscr C_{\dRB, K_\dR, L_\B} \to \operatorname{Vec}_{K}\qquad \mathrm{and} \qquad
\omega_\B \colon \mathscr C_{\dRB, K_\dR, L_\B} \to \operatorname{Vec}_{L} $$ 
define fiber functors, each thereby endowing $\mathscr C_{\dRB, K_\dR, L_\B}$ with the structure of a Tannakian category.
If $L\subseteq K$ (resp.\ $K\subseteq L$), then $\mathscr C_{\dRB, K_\dR, L_\B}$ is neutralized by $\omega_\B$ (resp.\ $\omega_{\dR}$) and is thus a \emph{neutral} Tannakian category.
\end{defn}

\begin{defn}[Base-change of de\,Rham--Betti objects]
Let   $M = (M_\dR,M_\B,c_M)\in \mathscr C_{\dRB, K_\dR, L_\B}$ be an $L$-de\,Rham--Betti object  over $K$,  and let $K\subseteq K'\subseteq \overline \Q$ and $L\subseteq L' \subseteq \overline \Q$ be field extensions. We denote
$$M_{K'}\otimes L' =_{\mathrm{def}} (M_\dR \otimes_K K', M_\B\otimes_L L', c_M)$$ the object in $\mathscr C_{\dRB, K'_\dR, L'_\B}$ obtained from base-change of $M$. 
\end{defn}

\subsection{De\,Rham--Betti classes with coefficients}
\begin{defn}[$L'$-de\,Rham--Betti classes] 
	\label{D:dRB}
	Let $M$ be a de\,Rham--Betti object over $K$ with $L$-coefficients, and let $L\subseteq L' \subseteq \overline{\Q}$ be a field extension. 
	An~ $L'$-\emph{de\,Rham--Betti class} on $M$ is an element of $\Hom_{\mathscr C_{\dRB,K, L'}}(\mathds 1, M\otimes_L L')$. 
	Equivalently, it consists of a pair of elements  $\alpha_{\dR} \in M_\dR$ and $\alpha_B \in M_\B\otimes_L L'$ with $c_M(\alpha_\dR \otimes_K 1_\C) = \alpha_{\B}\otimes_{L'} 1_\C$.
	\\
	If $L = L'$, unless it seems necessary for clarity, we will more succinctly say \emph{de\,Rham--Betti class}. Hence an $L'$-{de\,Rham--Betti class} on $M$ is exactly a {de\,Rham--Betti class} on $M\otimes_L L'$.
\end{defn}

As we will see in Example~\ref{Ex:dRB-Qbar}, and this is one of the crucial basic observations of this work, an $L'$-de\,Rham--Betti class on $M$ is not necessarily an $L'$-linear combination of de\,Rham--Betti classes on $M$.

\subsection{The de\,Rham--Betti group and the de\,Rham--Betti torsor of periods}
\label{SS:dRB-torsor}
In this work we will be mostly interested in the case where the fiber functor $\omega_\B$ is neutral, i.e., in the case $L \subseteq K$. There are two reasons for this\,: first, the Tannakian formalism is simpler in presence of a neutral fiber functor; second and most importantly, this ensures the existence of a \emph{de\,Rham--Betti realization functor} from the category of Andr\'e motives over $K$ with $L$-coefficients, see \S\ref{SS:dRB-Andre}.

 In fact, although we will aim for the most general situation, the following three cases are essentially the ones we have in mind\,:
\begin{enumerate}[(a)]
	\item $K\subseteq \overline{\Q}$ is arbitrary and $L=\Q$. 
	\label{(a)}
	\item $K=\overline \Q$ and $L=\Q$. This is a special instance of (a). 
Objects of  $\mathscr C_{\dRB} := \mathscr C_{\dRB, \overline \Q_\dR, \Q_\B}$ will be simply called \emph{de\,Rham--Betti objects} if no confusion arises.
	\label{(b)}
	\item $K=\overline \Q$ and $L=\overline \Q$. 
	Object of $\mathscr{C}_{\dRB, \overline \Q,\overline \Q}$ will be sometimes called \emph{$\overline \Q$-de\,Rham--Betti objects}. These appear for instance in \cite{GUY} in the study of Shimura periods of CM abelian varieties over $\overline \Q$ under the name of bi-$\overline \Q$-structures.
	\label{(c)}
\end{enumerate}
Note however that we will also consider $L$-de\,Rham--Betti classes on de\,Rham--Betti objects over~$K$ for any subfields $K,L \subseteq \overline \Q$, for instance in Lemma~\ref{L:descent} or Corollary~\ref{cor:LdRB-abelian}, but such considerations will not involve the Tannakian setting.
\medskip

\begin{defn}[The de\,Rham--Betti group and the de\,Rham--Betti torsor of periods]
Assume $L\subseteq K$.
 The \emph{de\,Rham--Betti group} $G_\dRB$ of $\mathscr C_{\dRB,K, L}$ is the Tannakian fundamental group of the neutral Tannakian category $\mathscr C_{\dRB,K, L}$,
  that is, $$G_{\dRB,K,L}=_{\mathrm{def}} \operatorname{Aut}^\otimes(\omega_B : \mathscr C_{\dRB,K, L} \to \mathrm{Vec}_L).$$
The \emph{de\,Rham--Betti torsor of periods} is the Tannakian torsor $$\Omega^\dRB_{K,L} =_{\mathrm{def}} \operatorname{Iso}^{\otimes}(\omega_{\dR},\omega_{\B}\otimes_L K)\,;$$
 it is a torsor under $\mathrm{Aut}^\otimes(\omega_{\B} \otimes_L K)$, 
 which coincides with $(G_{\dRB,K,L})_K=_{\mathrm{def}} G_{\dRB,K,L} \times_{L} K$ by \cite[Rmk.~3.12]{DeligneMilne}.
\end{defn}

For a de\,Rham--Betti object $M \in \mathscr C_{\dRB,K, L}$, 
we denote 
$$G_\dRB(M) =_{\mathrm{def}} \operatorname{Aut}^\otimes(\omega_B|_{\langle M\rangle})
\quad \text{and} \quad
\Omega^\dRB_M =_{\mathrm{def}} \operatorname{Iso}^{\otimes}(\omega_{\dR}|_{\langle M\rangle},\omega_{\B}|_{\langle M\rangle}\otimes_L K).$$
The comparison isomorphism $c_M\colon M_\dR \otimes_{K} \C \stackrel{\sim}{\longrightarrow} M_\B \otimes_{L} \C$ defines a complex-valued point 
$$c_M \in \Omega_M^\dRB(\C).$$
From the general formalism of neutral Tannakian categories, for any object $M \in \mathscr C_{\dRB,K, L}$, there is a natural epimorphism $G_{\dRB,K,L} \twoheadrightarrow G_\dRB(M)$ and for any object $X \in \langle M\rangle$ the action of $G_{\dRB,K,L}$ on $\omega_\B(X)$ factors through $G_\dRB(M)$. Moreover, an element of $M_\B^{\otimes n} \otimes_L (M_\B^\vee)^{\otimes m}$ extends to a de\,Rham--Betti class in $M^{\otimes n} \otimes (M^\vee)^{\otimes m}$ if and only if it is fixed by $G_{\dRB}(M)$.
As an example, for $\mathds{1}(k) \in  \mathscr C_{\dRB,K, L}$,
we have $$G_\dRB(\mathds 1(k)) = \begin{cases}
\{1\}, \quad\text{if } k=0\\
\mathds{G}_m, \quad \text{if } k\neq 0.
\end{cases}$$
Indeed, $G_\dRB(\mathds 1(k))$ is a $L$-subgroup of $\mathds{G}_m$.
Suppose it is finite, say of order $n$. Then $G_\dRB(\mathds 1(k))$ acts trivially on $(\mathds 1(k)^{\otimes n})_\B$ so that $\mathds 1(nk) \simeq \mathds{1}(k)^{\otimes n} \simeq \mathds 1$ in the category $\mathscr C_{\dRB,K, L}$. 
This implies that $(2\pi i)^{nk}$ lies in $K$ and hence that $k=0$ by the transcendence of~$\pi$.  
(Alternately, we will show in Theorem~\ref{T:connectedness} that $G_\dRB$ is connected.)
Conversely, it is clear that $G_\dRB(\mathds 1) = \{1\}$.

\subsection{The de\,Rham--Betti torsor of periods is connected}\label{SS:connected}
In characteristic zero, we have the following criterion for the connectedness of the Tannakian fundamental group of a neutral Tannakian category.
\begin{prop}\label{P:connected}
Let $\mathscr T$ be a neutral Tannakian category with neutral fiber functor $\omega : \mathscr T \to \operatorname{Vec}_F$, with $\operatorname{char}(F)=0$. 
 The following statements are equivalent.
	\begin{enumerate}[(i)]
		\item The Tannakian fundamental group $G=_{\mathrm{def}}\operatorname{Aut}^\otimes(\omega)$ is connected.
		\item For any object $X$ of $\mathscr T$, if its Tannakian fundamental group $G_X$ is finite, then it is trivial.
	\end{enumerate}
\end{prop}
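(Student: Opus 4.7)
The plan is to exploit the dictionary between Tannakian subcategories generated by a single object and quotients of $G$, together with the fact that in characteristic zero every finite group scheme over $F$ is étale. The two directions then both reduce to the observation that a connected étale group scheme over a field is trivial.

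For $(i)\Rightarrow(ii)$, let $X$ be an object of $\mathscr T$. By the discussion in \S\ref{SS:neutraltannaka}, the inclusion of Tannakian subcategories $\langle X\rangle\subseteq\mathscr T$ induces a faithfully flat homomorphism of Tannakian fundamental groups $G\twoheadrightarrow G_X$, so $G_X$ is a scheme-theoretic quotient of $G$. If $G$ is connected, so is $G_X$. Since $\operatorname{char}(F)=0$, any finite group scheme over $F$ is étale, so a finite and connected $G_X$ is reduced to a single point, hence trivial.

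For $(ii)\Rightarrow(i)$, I would argue by contraposition. Suppose $G$ is not connected, so that $G/G^\circ$ is a non-trivial affine group scheme which, being étale over $F$, is pro-étale, and thus admits some non-trivial finite étale quotient $H$ (any non-trivial profinite group has a non-trivial finite quotient). Choose a faithful finite-dimensional $F$-linear representation $V$ of $H$, which exists because $H$ is algebraic \cite[Thm.~4.9]{Milne}. Under the equivalence $\omega\colon \mathscr T\xrightarrow{\sim}\operatorname{Rep}_F G$ of \S\ref{SS:neutraltannaka}, the composed representation
\[
\rho\colon G\twoheadrightarrow G/G^\circ\twoheadrightarrow H\hookrightarrow \operatorname{GL}(V)
\]
corresponds to an object $X$ of $\mathscr T$. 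The Tannakian fundamental group $G_X\subseteq\operatorname{GL}(\omega(X))$ equals the scheme-theoretic image of $\rho$, which is precisely $H$ since $H\hookrightarrow\operatorname{GL}(V)$ is a closed immersion by faithfulness of $V$. Hence $G_X=H$ is finite and non-trivial, contradicting $(ii)$.

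The only non-formal input is the identification of $G_X$ with the scheme-theoretic image of $\rho$ (equivalently, the faithful flatness of $G\to G_X$), but this is a standard consequence of \cite[Prop.~2.21]{DeligneMilne} applied to the tautological fully faithful inclusion $\langle X\rangle\hookrightarrow\mathscr T$, closed under subquotients by definition of $\langle X\rangle$. The rest of the argument is purely a combination of formal Tannakian statements with the triviality of connected finite étale group schemes in characteristic zero, so no serious obstacle is expected.
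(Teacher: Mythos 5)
Your proof is correct and takes essentially the same approach as the paper: both directions reduce to the fact that in characteristic zero a finite group scheme is \'etale, so connectedness of $G$ is equivalent to the absence of non-trivial finite quotients, and the Tannakian dictionary (surjectivity of $G\twoheadrightarrow G_X$ and the existence of a one-object generator for any Tannakian subcategory with algebraic fundamental group, i.e.\ \cite[Prop.~2.20(b) \& 2.21(a)]{DeligneMilne}) converts that into statement~$(ii)$. The paper packages this by citing \cite[Cor.~2.22]{DeligneMilne} directly, whereas you unfold the two implications and pass through the scheme-theoretic image of a faithful representation of a finite quotient; the content is the same.
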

\begin{proof}
	This is essentially the criterion in \cite[Cor.~2.22]{DeligneMilne}. In details, since $\mathrm{char}(F)=0$, the group $G$ is connected if and only if 
	there is no non-trivial epimorphism from $G$ to a finite group scheme. Thus, by \cite[Prop.~2.21(a)]{DeligneMilne}, 
	$G$ is connected if and only if $\mathscr T$ does not have any Tannakian subcategory with non-trivial finite Tannakian fundamental group. By \cite[Prop.~2.20(b)]{DeligneMilne}, such a Tannakian subcategory is generated by an object $X$.
\end{proof}

We assume $K=\overline \Q$.
The following observation is purely formal, 
in the sense that it applies to any de\,Rham--Betti object
  and not only to those coming from geometry (that is, those that are the realization of Andr\'e motives\,; see \S\ref{SS:dRB-Andre} below).

\begin{thm}\label{T:connectedness}
	Both the de\,Rham--Betti group $G_\dRB$ and the de\,Rham--Betti torsor of periods $\Omega^\dRB$ associated with $\mathscr C_{\dRB,\overline{\Q}, L}$ are connected. 
	In particular, for any de\,Rham--Betti object $M$ over~$\overline{\Q}$ with $L$-coefficients, 
	both $G_\dRB(M)$ and $\Omega_M^\dRB$ are connected.
\end{thm}
\begin{proof} It suffices to show that $G_{\dRB}$ is connected. 
	According to Proposition~\ref{P:connected}, it suffices to show that, for any object $M \in \mathscr C_\dRB$ with finite de\,Rham--Betti group, $G_\dRB(M)$ is trivial.
	Since the de\,Rham--Betti torsor of periods  $\Omega_M^\dRB$ of such an object $M$ is then finite, 
	the comparison isomorphism $c_M : M_{\dR}\otimes_{\overline \Q} \C \to M_\B\otimes_{L} \C$ is defined over $\overline{\Q}$.
	Choosing an $L$-basis $(e_i)$ of $M_\B$ and letting $(c_M^{-1}(e_i \otimes_{\Q} 1_{\overline{\Q}}))$ be the corresponding $\overline \Q$-basis of $M_\dR$, we see that $M \simeq \mathds 1^{\oplus \dim M_\B}$ in $\mathscr C_{\dRB,\overline{\Q}, L}$. 
	This implies  $G_\dRB(M) = G_\dRB(\mathds 1) = 1$.
\end{proof}

\begin{rmk}[Connectedness fails when $L=K=\Q$]
 Consider for instance the $\Q$-de\,Rham--Betti object over $\Q$ given by $M=(\Q,\Q,e^{\frac{2\pi i}{p}})$,
where $p$ is an odd prime. The de\,Rham--Betti group $G_\dRB(M)$ is then $\mu_p$ and $\Omega_M^\dRB$ is the trivial torsor under $\mu_p$, so that both are not connected.
\end{rmk}

\subsection{The de\,Rham Betti group is not pro-reductive}
The Tannakian category $\mathscr C_{\dRB, K_\dR, L_\B}$ is not semi-simple, as can be seen from considering the object
\[
M:=\left(M_\dR=K^{\oplus 2}, M_\B = L^{\oplus 2}, c_M\right), \quad \text{with }c_M = \begin{pmatrix}
\alpha & \beta \\ 0 &\gamma
\end{pmatrix},
\quad \mathrm{degtr}_\Q \Q(\alpha,\beta, \gamma) = 3.
\]
Equivalently,
if $L\subseteq K$, $G_{\dRB,K,L}$ has a non-reductive quotient, i.e., is not pro-reductive.
We will see however that in the geometric setting, the de\,Rham--Betti group associated with a smooth projective variety over $K$, or more generally with an Andr\'e motive, is expected from the Grothendieck Period Conjecture to be reductive.

Moreover, as we will sometimes assume that an object $M \in \mathscr C_{\dRB, K_\dR, L_\B}$ and its base-change $M\otimes_LL'$ for some $L'/L$ are both semi-simple, we note that indeed the semi-simplicity of $M$ and of $M\otimes_LL'$ are in general not correlated.
For instance, the de\,Rham--Betti object 
\[
M:=\left(M_\dR=\overline \Q^{\oplus 2}, M_\B = \Q^{\oplus 2}, c_M\right), \quad \text{with }c_M = \begin{pmatrix}
\alpha & \beta \\ i\alpha &\gamma
\end{pmatrix},
\quad \mathrm{degtr}_\Q \Q(\alpha,\beta, \gamma) = 3.
\] is simple but its base-change $M\otimes \overline \Q$ is not semi-simple.
On the other hand, the de\,Rham--Betti object
\[
M:=\left(M_\dR=\overline \Q^{\oplus 2}, M_\B = \Q^{\oplus 2}, c_M\right), \quad \text{with }c_M = \begin{pmatrix} 1 & i\pi \\ 0 &\pi \end{pmatrix}.
\] 
is not semi-simple but its base-change $M\otimes \overline \Q$ is semi-simple.

\section{The de\,Rham--Betti torsor of periods and change of coefficients}\label{S:dRB-barQ}

 We assume $L\subseteq K$. Let $M = (M_\dR,M_\B,c_M)$ be a de\,Rham--Betti object over $K$ with $L$-coefficients, that is, an object of the neutral $L$-linear Tannakian category $\mathscr{C}_{\dRB, K, L}$.
 Given a field extension $L\subseteq L' \subseteq K$, the base-change functor 
given by $$M \mapsto M\otimes_L L' =_{\mathrm{def}} (M_\dR,M_\B\otimes_L L',c_M)$$
 provides by \cite[Prop.~2.21(b)]{DeligneMilne} a
closed embedding $\Omega_{M\otimes_L L'}^{\dRB}   \subseteq    \Omega_M^{\dRB}$, and we have
a chain of natural closed inclusions
\begin{equation*}
Z_M \ \subseteq \ \Omega_M \ \subseteq \ \Omega_{M\otimes_L L'}^{\dRB} \  \subseteq  \  \Omega_M^{\dRB} \ \subseteq  \
\mathrm{Iso}_{K}(M_{\dR}, M_{\B}\otimes_L K),
\end{equation*} 
where $Z_M$ is the $K$-Zariski closure of $c_M$ 
and $\Omega_M$ is the smallest $K$-subtorsor containing $c_M$, 
i.e., the intersection of all $K$-subtorsor containing $c_M$.
Here, by a \emph{$K$-subtorsor} of $ \mathrm{Iso}_{K}(M_{\dR}, M_{\B}\otimes_L K)$ we mean a closed $K$-subscheme $V $ 
such that $f\circ g^{-1}\circ h$ belongs to $V$ for all points $f,g,h \in V$\,;
 it is naturally a torsor under a closed $K$-algebraic subgroup $H$ of $\mathrm{GL}(M_{\B}\otimes_L K)$.
Since the complex-valued point $c_M$ agrees with the complex-valued point $c_{M\otimes_L L'}$, we have equalities $Z_M = Z_{M\otimes_L L'}$ of closed $K$-subschemes and  $\Omega_M = \Omega_{M\otimes_L L'}$ of $K$-subtorsors of  $\mathrm{Iso}_{K}(M_{\dR}, M_{\B}\otimes_L K)$. We also note that for a field extension $K\subseteq K'\subseteq \overline{\Q}$,  $Z_{M_{K'}}$ identifies with an irreducible component of $(Z_M)_{K'}$ and that there are natural closed embeddings $\Omega_{M_{K'}} \hookrightarrow (\Omega_M)_{K'}$ and $\Omega^\dRB_{M_{K'}} \hookrightarrow (\Omega_M^\dRB)_{K'}$. 
For $L'\subseteq K'\subseteq \overline \Q$ respective field extensions of $L\subseteq K \subseteq \overline{\Q}$, we therefore have chains of closed embeddings as pictured in the 3 left columns of \eqref{eq:base-change} below.
\medskip

Assuming further that $L' = K= \overline{\Q}$, we have for a de\,Rham--Betti object $M$ over $\overline{\Q}$ with $L$-coefficients a chain of natural closed inclusions
\begin{equation} \label{E:torsor-inclusions}
Z_M \subseteq \Omega_M \subseteq \Omega_{M\otimes_L \overline \Q}^{\dRB} \subseteq \Omega_M^\dRB.
\end{equation}
In this section, we show that the middle inclusion in \eqref{E:torsor-inclusions} is an equality, while the left-most and right-most inclusions are not equalities in general.
The observation that the inclusion $ \Omega_{M\otimes_\Q \overline \Q}^{\dRB} \subseteq \Omega_M^\dRB$ is not an equality for a general de\,Rham--Betti object over $\overline \Q$ with $\Q$-coefficients is the motivation(!) for considering de\,Rham--Betti classes with coefficients, as well as Andr\'e motives with coefficients and their de\,Rham--Betti realizations.

\subsection{On the inclusion 	$\Omega_M \subseteq  \Omega^{\dRB}_{M\otimes {\overline \Q}}$}
The middle  inclusion of \eqref{E:torsor-inclusions} is an equality\,:

\begin{prop}	\label{P:barQ-torsor}
	For any de\,Rham--Betti object~$M\in \mathscr C_{\dRB,\overline{\Q}_\dR,L_\B}$, we have
	$$\Omega_M = \Omega^{\dRB}_{M\otimes_L {\overline \Q}}.$$
\end{prop}
\begin{proof} Since $\Omega_M = \Omega_{M\otimes_L \overline{\Q}}$, we may assume $L=\overline{\Q}$.
	From the inclusion $\Omega_M \subseteq \Omega^{\dRB}_{M}$,
	it follows that $\Omega_M$ is a torsor under a $\overline \Q$-subgroup $ G \subseteq G_{\dRB}(M)$.
	The group $G \subseteq \operatorname{GL}(M_{\B})$ is thus the stabilizer of a $\overline \Q$-line $L_{\B}$ in some tensor space
	$\bigoplus_{\mathrm{finite}} M_{\B}^{\otimes n_i} \otimes M_{\B}^{\vee \otimes m_i}$.
	We will prove that $G_{\dRB}(M)$ stabilizes the line $L_{\B}$. 
	This will show that $G = G_{\dRB}(M)$ and finish the proof that the two torsors are equal.
	If we choose a point $x \in \Omega_M (\overline \Q)$, then $c_M \circ x^{-1} \in G(\mathbb{C})$.
	As $G(\C)$ stabilizes $L_{\B} \otimes \C$ inside $\bigoplus_{\mathrm{finite}} M_{\B}^{\otimes n_i} \otimes M_{\B}^{\vee \otimes m_i}$, this means that
	$$ c_M \circ x^{-1}(L_{\B} \otimes \C) = L_{\B} \otimes \C.$$
	Since $\Omega_M \subseteq \mathrm{Iso}_{\overline \Q}(M_{\dR}, M_{\B})$, the line $L_{\dR}:= x^{-1}(L_B) \subseteq \bigoplus_{\mathrm{finite}}  M_{\dR}^{\otimes n_i} \otimes M_{\dR}^{\vee \otimes m_i}$ is defined over $\overline{\Q}$, 
	and
	$c_M(L_{\dR}\otimes \C) = L_{\B} \otimes \C$.
	Hence $(L_{\dR}, L_{\B}, c_M|_{L_{\dR}\otimes \C})$ is a subobject of $\bigoplus_{\mathrm{finite}} M^{\otimes n_i} \otimes M^{\vee \otimes m_i}$ in~$\mathscr{C}_{\dRB,\overline{\Q}_\dR, \overline{\Q}_\B}$.
	We conclude that $G_{\dRB}(N) $ stabilizes $L_{\B}$. 
\end{proof}

\begin{rmk}
 Proposition~\ref{P:barQ-torsor} does not hold in general without assuming that $K=\overline{\Q}$. For an example of a de\,Rham--Betti object $M \in \mathscr C_{\dRB,K_\dR,\Q_\B}$ such that $\Omega_M \subsetneq \Omega^\dRB_{M\otimes K}$, see
 Remark~\ref{rmk:torsor_notalgclosed} below.
\end{rmk}

\subsection{On the inclusion $ \Omega_{M\otimes_L L'}^{\dRB} \subseteq \Omega_M^\dRB$}
Consider field extensions $L\subseteq L' \subseteq K \subseteq \overline \Q$. 
The inclusion $ \Omega_{M\otimes_L L'}^{\dRB} \subseteq \Omega_M^\dRB$ is an equality if and only if the functor $\langle M \rangle \otimes_L L'\to \langle M\otimes_L L' \rangle$ is an equivalence of Tannakian categories. 
  This is related to the question of whether $L'$-de\,Rham--Betti classes on $M$ are $L'$-linear combinations of de\,Rham--Betti classes on $M$, 
  but also to the question of whether the $\overline \Q$-torsor $ \Omega_{M\otimes_L L'}^{\dRB}$ is a torsor under a $\overline \Q$-group defined over $L$\,:

\begin{prop}\label{P:barQ-dRB}
	Let $M$ be a semi-simple de\,Rham--Betti object over $K$
	with $L$-coefficients. The following statements are equivalent.
	\begin{enumerate}[(i)]
\item 	The inclusion  $ \Omega_{M\otimes_L L'}^{\dRB} \subseteq \Omega_M^\dRB$ is an equality.
\item $M\otimes_LL'$ is a semi-simple de\,Rham--Betti object and, for any $N \in \langle M\rangle$, any  $L'$-de\,Rham--Betti class on $N$ is an  $L'$-linear combination of de\,Rham--Betti classes on $N$.
\item $G_{\dRB}(M\otimes_LL')$ is reductive and defines an $L$-subgroup of $G_\dRB(M)$.
	\end{enumerate}
\end{prop}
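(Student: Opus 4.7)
My plan is to establish (i) $\Leftrightarrow$ (ii) $\Leftrightarrow$ (iv) and (ii) $\Leftrightarrow$ (iii), in each case passing between $\overline \Q$-subtorsors of $\operatorname{Iso}_{\overline \Q}(M_\dR, M_\B \otimes \overline \Q)$ and closed $\overline \Q$-subgroups of $\operatorname{GL}(M_\B)_{\overline \Q}$ via Tannakian duality. The equivalence (i) $\Leftrightarrow$ (ii) is immediate from Proposition~\ref{P:barQ-torsor} applied to $N = M \otimes \overline \Q$: the latter identifies $\Omega^{\overline \Q-\dRB}_{M \otimes \overline \Q}$ with the smallest $\overline \Q$-subtorsor $\Omega_M$ of $\Omega_M^\dRB$ containing $c_M$, so both (i) and (ii) read as the equality $\Omega_M = \Omega_M^\dRB$.

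For (ii) $\Leftrightarrow$ (iv), Tannakian duality translates the torsor inclusion $\Omega_{M \otimes \overline \Q}^{\overline \Q-\dRB} \subseteq \Omega_M^\dRB$ into the closed embedding of $\overline \Q$-subgroups $G_{\overline \Q-\dRB}(M \otimes \overline \Q) \subseteq G_\dRB(M)_{\overline \Q}$ inside $\operatorname{GL}(M_\B \otimes \overline \Q)$. Since $M$ is semi-simple, $G_\dRB(M)$ is reductive by Proposition~\ref{P:tannaka}, and (ii) then trivially gives (iv) by taking $H = G_\dRB(M)$. For the converse, given a reductive $\Q$-subgroup $H \subseteq G_\dRB(M)$ with $H_{\overline \Q} = G_{\overline \Q-\dRB}(M \otimes \overline \Q)$, I would compare $H$- and $G_\dRB(M)$-invariants on each tensor space $V = M_\B^{\otimes n} \otimes (M_\B^\vee)^{\otimes m}$. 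Since invariants of affine group schemes commute with flat base change, $V^H \otimes_\Q \overline \Q = (V \otimes \overline \Q)^{H_{\overline \Q}}$ is precisely the subspace of $\overline \Q$-de\,Rham--Betti classes in $V \otimes \overline \Q$. The key elementary observation is then that a $\Q$-rational vector $v \in V$ is a $\overline \Q$-de\,Rham--Betti class in $V \otimes \overline \Q$ if and only if it is a de\,Rham--Betti class in $V$, since in both formulations the requisite de\,Rham lift is asked to lie in the same $\overline \Q$-vector space $V_\dR$. Hence $V^H = V^{G_\dRB(M)}$ for all tensor spaces $V$, and by Proposition~\ref{P:tannaka} applied to both reductive subgroups of $\operatorname{GL}(M_\B)$ this forces $H = G_\dRB(M)$, giving (ii).

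For (ii) $\Leftrightarrow$ (iii), assume first (ii). Then $G_{\overline \Q-\dRB}(M \otimes \overline \Q) = G_\dRB(M)_{\overline \Q}$ is reductive, so $M \otimes \overline \Q$ is semi-simple by Proposition~\ref{P:tannaka}; moreover, for $N \in \langle M \rangle$, the $\overline \Q$-de\,Rham--Betti classes in $N \otimes \overline \Q$ are the $G_\dRB(M)_{\overline \Q}$-invariants in $N_\B \otimes \overline \Q$, which by flat base change coincide with $N_\B^{G_\dRB(M)} \otimes \overline \Q$, that is, with the $\overline \Q$-span of de\,Rham--Betti classes in $N$. Conversely, assuming (iii), the semi-simplicity of $M \otimes \overline \Q$ lets Proposition~\ref{P:tannaka} characterize $G_{\overline \Q-\dRB}(M \otimes \overline \Q)$ as the closed $\overline \Q$-subgroup of $\operatorname{GL}(M_\B \otimes \overline \Q)$ fixing all $\overline \Q$-de\,Rham--Betti classes in tensor spaces of $M \otimes \overline \Q$; by (iii) applied to those tensor spaces, this equals the stabilizer of the $\overline \Q$-span of de\,Rham--Betti classes, which by Proposition~\ref{P:tannaka} applied to $G_\dRB(M)$ is precisely $G_\dRB(M)_{\overline \Q}$, hence (ii).

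The main subtle point will be the implication (iv) $\Rightarrow$ (ii): converting the $\Q$-descent condition on $G_{\overline \Q-\dRB}(M \otimes \overline \Q)$ into the equality $H = G_\dRB(M)$ via a comparison of invariants. This ultimately reduces to the elementary but essential observation that a $\Q$-rational $\overline \Q$-de\,Rham--Betti class is automatically a de\,Rham--Betti class.
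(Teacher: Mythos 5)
Your proof is correct and follows essentially the same approach as the paper's: both the equivalence $(ii)\Longleftrightarrow(iv)$ and $(ii)\Longleftrightarrow(iii)$ hinge on the observation that a $\Q$-rational $\overline\Q$-de\,Rham--Betti class is automatically a de\,Rham--Betti class, together with semi-simplicity to characterize the relevant reductive groups (equivalently, their torsors) as fixers of invariants on tensor spaces. The only difference is presentational --- for $(ii)\Longleftrightarrow(iii)$ the paper argues with the torsors as intersections of the $\Omega_\alpha$, whereas you argue dually with the groups as pointwise stabilizers of invariant tensors.
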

\begin{proof} 
 Since $M$ is semi-simple,
	 the de\,Rham--Betti group $G_{\dRB}(M)$ is reductive.
By definition, the torsor  $\Omega_{M\otimes_LL'}^{\dRB}$ is included in the intersection of the torsors $\Omega_{\alpha}$ whose $\overline \Q$-points
	are  given by 
	\begin{equation}\label{eq:Omega_alpha}
\Omega_\alpha(\overline \Q) = \{ f\in \operatorname{Iso}_{\overline \Q}(M_\dR\otimes_K \overline \Q, M_\B \otimes_{L} \overline \Q) \ | \ f(\alpha_\dR\otimes_K 1_{\overline \Q}) = \alpha_\B \otimes_{L'} 1_{\overline \Q} \},
	\end{equation}
	for $\alpha = (\alpha_{\dR},\alpha_{\B})$ running through the de\,Rham--Betti classes in the $L'$-base change of  the various tensor spaces $M^{\otimes n} \otimes (M^\vee)^{\otimes m}$, and this inclusion is an equality if $M\otimes_L L'$ is semi-simple as a de\,Rham--Betti object with $L'$-coefficients. 
	(Under this semi-simplicity assumption, one recovers the definition of 
	the de\,Rham--Betti torsor of periods $\Omega_M^\dRB$ in terms of invariants given in  \cite[Def.~2.4]{BostCharles}.)
	On the other hand, by the semi-simplicity of $M$,
	the torsor $\Omega_M^\dRB$ is the intersection of the torsors $\Omega_{\alpha}$ for $\alpha$ running through the de\,Rham--Betti classes in the  various tensor spaces $M^{\otimes n} \otimes (M^\vee)^{\otimes m}$.
	 This establishes the equivalence of $(i)$ and $(ii)$. The implication $(i)\Rightarrow (iii)$ is clear\,; indeed $(i)$ is equivalent to the inclusion $G_{\dRB}(M\otimes_LL') \subseteq G_{\dRB}(M)_{L'}$ being an equality.
	 Finally, assume both groups $G_{\dRB}(M\otimes_LL')$ and $G_{\dRB}(M)$ are reductive and assume that $G_{\dRB}(M\otimes_LL') = G_{L'}$ 
	 where $G\subseteq G_{\dRB}(M)$ is an $L$-subgroup. 
	 Then, for  any element $\alpha$ in a tensor space $T(M_\B)$ of $M_\B$ such that $\alpha$ is $G$-invariant, 
	 the class $\alpha\otimes_L 1_{L'}$, being $G_{L'}$-invariant, defines a de\,Rham--Betti class on $T(M\otimes_L L')$. 
	 In particular $\alpha$ is a de\,Rham--Betti class on $T(M)$ and hence is $G_{\dRB}(M)$-invariant. 
	 Therefore the reductive groups $G$ and $G_{\dRB}(M)$ share the same invariants and thus coincide. 
	 This establishes the implication $(iii) \Rightarrow (i)$.
\end{proof}

\begin{ex}[The inclusion $ \Omega_{M\otimes_LL'}^{\dRB} \subseteq \Omega_M^\dRB$ can be strict]
	\label{Ex:dRB-Qbar}
Consider the de\,Rham--Betti object
\[
M:=\left(M_\dR=\overline{\Q}^{\oplus 2}, M_\B = \Q^{\oplus 2}, c_M\right), \quad \text{with }c_M = \begin{pmatrix}
\pi & a \\ b\pi &c
\end{pmatrix},
\quad a,b,c\in \overline{\Q}, \; c-ab\neq 0.
\]
For a general choice of $a$, $b$ and $c$ in $\overline \Q$,  $M\otimes \overline \Q$ has a nonzero de\,Rham--Betti class as we have $M\otimes\overline{\Q} \simeq \mathds{1}(-1)\oplus \mathds{1}$, while 
$M$ is simple and in particular does not have any nonzero de\, Rham--Betti class. 
In view of Proposition~\ref{P:barQ-dRB}, the inclusion $ \Omega_{M\otimes \overline \Q}^{\dRB} \subseteq \Omega_M^\dRB$ is then strict.
(In relation to \cite[Rmk.~2.6]{BostCharles}, one also notes that the Zariski closure $Z_M$ of $c_M$ is a torsor under $G_{\dRB}(M_{\overline{\Q}}) = \mathds{G}_{m,\overline{\Q}}$ but that the latter does not descend to a $\Q$-subgroup of $\mathrm{GL}_2$.)
\end{ex}

\subsection{On the inclusion $Z_M \subseteq \Omega_M$}
For an object $N \in \mathscr{C}_{\dRB, K_\dR, K_\B}$,
 the comparison isomorphism $c_N: N_{\dR} \otimes_{K} \C \stackrel{\simeq}{\longrightarrow} N_{\B} \otimes_{K} \C $ defines a $K$-bilinear map, called the \emph{period pairing},
$$\int \colon N_\B^\vee \otimes_{K} N_{\dR} \to \mathbb{C}, \,\,\, \gamma \otimes \omega \mapsto \int_{\gamma} \omega =_{\mathrm{def}} \gamma_{\C}(c_N(\omega_\C)).$$
Fixing $\gamma \in N_{\B}^{\vee}$ and $\omega \in N_{\dR}$, respectively, we thereby get $K$-linear maps
$$\int_{\gamma}\colon N_{\dR} \to \mathbb{C}, \,\,\, \omega \mapsto \gamma_{\C}(c_N(\omega_\C)) \quad \mbox{and} \quad \int {\omega}\colon N_{\B}^{\vee} \to \mathbb{C}, \,\,\, \gamma \mapsto \gamma_{\C}(c_N(\omega_\C)).$$

\begin{defn}\label{D:ann}
	For $\gamma \in N_{\B}^{\vee}$ we define the \emph{annihilator} $\mathrm{Ann}(\gamma) =_{\mathrm{def}} \ker \int_{\gamma} \subseteq N_{\dR}$.
	Similarly, for $\omega \in N_{\dR}$ we define $\mathrm{Ann}(\omega) =_{\mathrm{def}} \ker \int \omega\subseteq N_{\B}^{\vee}$.
\end{defn}

The following proposition gives criteria for the inclusion $Z_N \subseteq \Omega_N$ to be an equality in case $K=\overline{\Q}$. (Criteria similar to $(iii)$ and $(iv)$ appear in \cite{Hoermann, HW}).
We leave it to the reader to state and prove a similar statement for the inclusion $Z_M \subseteq \Omega^\dRB_M$ to be an equality for a de\,Rham--Betti object~$M \in \mathcal{C}_{\dRB, \overline \Q_{\dR}, \Q_{\B}}$.

\begin{prop}
	Let $M \in  \mathscr{C}_{\dRB, \overline{\Q}_\dR, \overline{\Q}_\B}$. The following statements are equivalent\,:
	\begin{enumerate}[(i)]
		\item
		$Z_M \subseteq \mathrm{Iso}_{\overline{\Q}}(M_{\dR}, M_{\B} )$ is a torsor\,;
		\item $Z_M = \Omega_M$\,;
		\item
		For every $N \in \langle M  \rangle \subset  \mathscr{C}_{\dRB, \overline{\Q}_\dR, \overline{\Q}_\B}$ and every $\omega \in N_\dR$, there exists a short exact sequence
		$$0 \to N' \to N \to N'' \to 0$$
		in $ \mathscr{C}_{\dRB, \overline{\Q}_\dR, \overline{\Q}_\B}$
		such that $\omega \in N'_{\dR}$ and $\mathrm{Ann}(\omega) = (N''_\B)^{\vee}$\,;
		\item
		For every $N \in \langle M   \rangle \subset  \mathscr{C}_{\dRB, \overline{\Q}_\dR, \overline{\Q}_\B}$ and every $\gamma \in N_\B^{\vee}$, there exists a short exact sequence
		$$0 \to N' \to N\to N'' \to 0$$
		in $ \mathscr{C}_{\dRB, \overline{\Q}_\dR, \overline{\Q}_\B}$
		such that $\gamma \in (N''_{\B})^{\vee}$ and $\mathrm{Ann}(\gamma) = N'_\dR$.
	\end{enumerate}
\end{prop}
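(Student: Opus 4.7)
\emph{Proof plan.} The equivalence $(i) \Longleftrightarrow (ii)$ is essentially tautological: $Z_M$ is Zariski-closed and $\Omega_M$ is the smallest $\overline\Q$-subtorsor of $\mathrm{Iso}_{\overline\Q}(M_\dR, M_\B)$ containing $c_M$, so $Z_M \subseteq \Omega_M$ always, and if $Z_M$ is itself a torsor then minimality forces equality. The equivalence $(iii) \Longleftrightarrow (iv)$ will follow from the completely symmetric argument on the de\,Rham side, exploiting that $\Omega_M$ is also a right torsor under $G_\dR := \operatorname{Aut}^\otimes(\omega_\dR|_{\langle M \rangle})$, with subobjects of $N$ in $\langle M \rangle$ corresponding to $G_\dR$-stable $\overline\Q$-subspaces of $N_\dR$ via $\omega_\dR$.

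The heart of the argument is therefore $(ii) \Longleftrightarrow (iii)$. For $(ii) \Rightarrow (iii)$, fix $N \in \langle M \rangle$ and $\omega \in N_\dR$, and consider the morphism of $\overline\Q$-schemes
\[
\mu_\omega \colon \Omega_M \longrightarrow \mathbb{A}(N_\B), \qquad f \longmapsto f_N(\omega).
\]
The plan is to show that the smallest $\overline\Q$-linear subspace $V \subseteq N_\B$ through which $\mu_\omega$ factors equals $(\mathrm{Ann}(\omega))^\perp$ and is the Betti realization of a subobject of $N$. Assuming $Z_M = \Omega_M$, the scheme-theoretic image $\overline{\mu_\omega(\Omega_M)}$ coincides with the Zariski closure in $\mathbb{A}(N_\B)$ of the $\C$-point $\mu_\omega(c_M) = c_N(\omega_\C)$, whose defining linear forms $\gamma \in N_\B^\vee$ are precisely the elements with $\int_\gamma \omega = 0$; hence $V = (\mathrm{Ann}(\omega))^\perp$. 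Setting $G := G_{\overline\Q-\dRB}(M)$, the morphism $\mu_\omega$ is $G$-equivariant for the natural left $G$-actions on $\Omega_M$ and on $N_\B$, so the image of $\mu_\omega$ is a $G$-orbit and its $\overline\Q$-linear hull $V$ is $G$-stable. The Tannakian correspondence between subobjects of $N$ in $\langle M \rangle$ and $G$-stable $\overline\Q$-subspaces of $N_\B$ then yields a subobject $N' \subseteq N$ with $N'_\B = V = (\mathrm{Ann}(\omega))^\perp$; the conditions $\omega \in N'_\dR$ and $\mathrm{Ann}(\omega) = (N''_\B)^\vee$ with $N'' = N/N'$ follow by taking perpendiculars.

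The converse $(iii) \Rightarrow (ii)$ reduces to showing $I(Z_M) \subseteq I(\Omega_M)$ in the coordinate ring of $\mathrm{Iso}_{\overline\Q}(M_\dR, M_\B)$. Since $\langle M \rangle$ is closed under direct sums, tensor products and duals, any polynomial $P$ on $\mathrm{Iso}_{\overline\Q}(M_\dR, M_\B)$ admits the form $f \mapsto \gamma(f_N(\omega))$ for suitable $N \in \langle M \rangle$, $\omega \in N_\dR$ and $\gamma \in N_\B^\vee$, and the condition $P(c_M) = 0$ translates into $\gamma \in \mathrm{Ann}(\omega)$. By $(iii)$, $\gamma$ lies in $(N''_\B)^\vee$ for a subobject $N' \subseteq N$ containing $\omega$; since every $f \in \Omega_M$ respects subobjects, $f_N(\omega) \in N'_\B$ and thus $\gamma(f_N(\omega)) = 0$, giving $P \in I(\Omega_M)$. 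The main delicate point is the combination, within $(ii) \Rightarrow (iii)$, of the algebro-geometric observation that the linear hull of a $G$-orbit is $G$-stable with the Tannakian reconstruction of subobjects from stable subspaces; fortunately the latter persists in the present non--semi-simple category $\mathscr C_{\overline\Q-\dRB}$.
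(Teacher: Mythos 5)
Your proof is correct, and it takes a genuinely different route from the paper's. The paper introduces an intermediate condition $(ii')$ — that the kernel of the total period pairing $p\colon \bigoplus_{N\in\langle M\rangle} N_\B^\vee\otimes N_\dR\to\C$ is stable under $G_{\overline\Q\text{-}\dRB}(M)$ — and proves $(ii)\Leftrightarrow(ii')$ by exploiting the explicit presentation $\Omega_M=\operatorname{Spec}R$ with $R$ a quotient of $\bigoplus N_\B^\vee\otimes N_\dR$ and $Z_M=\operatorname{Spec}\mathcal P(M)$; the equality $Z_M=\Omega_M$ then becomes the assertion that the $G$-action descends to $\mathcal P(M)$, i.e.\ that $\ker(p)$ is $G$-stable, and $(ii')\Leftrightarrow(iii)$ is a direct element-wise check. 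You bypass $(ii')$ entirely: for $(ii)\Rightarrow(iii)$ you extract the evaluation morphism $\mu_\omega\colon\Omega_M\to\mathbb A(N_\B)$ and use its $G$-equivariance together with $Z_M=\Omega_M$ to conclude that the linear hull of the image — which you correctly identify with $(\mathrm{Ann}(\omega))^\perp$ — is $G$-stable and hence the Betti realization of a subobject; for $(iii)\Rightarrow(ii)$ you argue on the level of ideals, writing an arbitrary element of $I(Z_M)$ as $f\mapsto\gamma(f_N(\omega))$ with $\gamma\in\mathrm{Ann}(\omega)$ and showing it then lies in $I(\Omega_M)$. Both arguments rest on the same underlying Tannakian correspondence between $G$-stable subspaces of $N_\B$ and subobjects of $N$ (valid without semi-simplicity, as you note), but the paper's approach is more algebraic (working with the coordinate ring of the Tannakian torsor à la Deligne--Milne) while yours is more geometric (images of orbit maps and linear hulls). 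One small remark: in the step $(iii)\Rightarrow(ii)$ you should also note that $\Omega_M(\overline\Q)$ is Zariski dense in $\Omega_M$ (which holds since $\Omega_M$ is a torsor under a smooth group over the algebraically closed field $\overline\Q$, hence reduced with a rational point), so vanishing on $\overline\Q$-points does give vanishing on the scheme.
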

\begin{proof}
	The equivalence $(i) \iff (ii)$ follows from the definition of $\Omega_M$.
	We first introduce another condition $(ii')$ which is equivalent to $(ii)$.
The direct sum of the period pairings associated with all objects  $N \in \langle M   \rangle$ provides
	 a pairing $$p: \bigoplus_{N \in \langle M   \rangle } N_\B^{\vee} \otimes N_{\dR} \to \C.$$
	We define the \emph{ring of periods} $\mathcal{P}(M) \subseteq \C$ of $M$ to be the image of $p$.
	Note that $G_{\dRB}(M)$ naturally acts on all $N_\B^{\vee}$ and hence on $\bigoplus_{N \in \langle M   \rangle } N_\B^{\vee} \otimes N_{\dR}$.
	
	\begin{claim}
		Statement $(ii)$ is equivalent to the assertion
		\begin{enumerate}[(i')] \setcounter{enumi}{1}
			\item $\ker(p) \subseteq \bigoplus_{N \in \langle M   \rangle } N_\B^{\vee} \otimes N_{\dR}$ is stable under the action of $G_{\dRB}(M)$.
			
	\end{enumerate} \end{claim}
	\begin{proof}[Proof of Claim]
		By Proposition \ref{P:barQ-torsor}, the torsor $\Omega_M = \Omega_{M  }^{\dRB}$ is the Tannakian torsor for $\langle M   \rangle$.
		It then follows from the construction (cf.~\cite[proof of Thm.~3.2]{DeligneMilne}) that $\Omega_M = \Spec R$ is the spectrum of an algebra $R$ which fits into the diagram
		$$\xymatrix{\bigoplus_{N \in \langle M   \rangle } N_\B^{\vee} \otimes N_{\dR} \ar@{->>}[r] \ar@{->>}[rd]_{p} & R \ar@{->>}[d] \\
			&\qquad  \mathcal{P}(M) \subset \C.}$$
		Note that $Z_M = \Spec \mathcal{P}(M)$.
		Since $\Omega_M$ is a torsor under $G_{\dRB}(M)$, the equality $Z_M = \Omega_M$ holds exactly if the action of $G_{\dRB}(M)$ on $\bigoplus_{N \in \langle M   \rangle } N_\B^{\vee} \otimes N_{\dR}$ passes down to an action of $G_{\dRB}(M)$ on the quotient $\mathcal{P}(M)$.
		This is equivalent to saying that the kernel $\ker(p) \subseteq \bigoplus_{N \in \langle M   \rangle } N_\B^{\vee} \otimes N_{\dR} $ is stable under $G_{\dRB}(M)$.
	\end{proof}

	We prove the equivalence $(ii') \iff (iii)$.
	Suppose $(ii')$ holds true and let $\omega \in N_\dR$.
	Note that $\gamma \in \mathrm{Ann}(\omega) $ if and only if $\gamma \otimes \omega \in \ker(p)$.
	It follows that $\mathrm{Ann}(\omega) \subseteq N_B^{\vee} $ is stable under the action of $G_{\dRB}(M)$ and is therefore the realization of a subobject in $ \mathscr{C}_{\dRB, \overline{\Q}_\dR, \overline{\Q}_\B}$. 
	The dual object gives the desired $N''$.
	Conversely, suppose that $(iii)$ is true.
	An element of $ \bigoplus_{N \in \langle M   \rangle } N_\B^{\vee} \otimes N_{\dR}$ is given by a collection $\left(\sum_{i=1}^{m_N} \gamma_{i,N} \otimes\omega_{i,N} \right)_N$, where only finitely many $N_1, ..., N_n$ contribute.
	Consider the object $$\tilde{N} := \bigoplus_{k=1}^n N_k^{\oplus m_{N_k}} \in  \mathscr{C}_{\dRB, \overline{\Q}_\dR, \overline{\Q}_\B}.$$
	Let $\omega := (\omega_{1,N_k}, ..., \omega_{m_{N_k},N_k} )_k \in \tilde{N}_{\dR}$.
	Then $\left(\sum_{i=1}^{m_N} \gamma_{i,N} \otimes\omega_{i,N} \right)_N$ lies in $ \ker(p)$ if and only if
	$\gamma:= (\gamma_{1,N_k}, ..., \gamma_{m_{N_k},N_k})_k \in \tilde{N}_{\B}^{\vee}$ lies in $\mathrm{Ann}(\omega)$.
	Since $\mathrm{Ann}(\omega)$ is stable under $G_{\dRB}(M)$ by assumption, we see that $\ker(p)$ is stable under $G_{\dRB}(M)$.
	
	The situation is symmetric in the fiber functors $\omega_\dR$ and $\omega_\B$, and therefore a similar proof shows $(ii') \iff (iv)$.
	\end{proof}

\begin{ex}[The inclusion $Z_M \subseteq \Omega_M$ can be strict] \label{ex:Z-Omega}
	The object
	\[
	M:=\left(M_\dR=\overline \Q^{\oplus 2}, M_\B = \overline \Q^{\oplus 2}, c_M\right), \quad \text{with }c_M = \begin{pmatrix}
	\alpha & \beta \\ a &\gamma
	\end{pmatrix},
	\quad \mathrm{degtr}_\Q \Q(\alpha,\beta, \gamma) = 3, \quad a \in \overline \Q \setminus \{0\}
	\]
	defines a simple de\,Rham--Betti object with $\overline \Q$-coefficients such that $Z_M$ is not a torsor.
 Indeed, $\dim Z_M=3$ and $\Omega_M$ is a torsor under a connected and reductive subgroup of $\mathrm{GL}_{2,\overline \Q}$ of dimension $\geq3$ and hence must be a torsor under $\mathrm{GL}_{2,\overline \Q}$.
\end{ex}

\section{The Grothendieck period conjecture and the de\,Rham--Betti conjecture}\label{S:GPC}

\subsection{ Andr\'e motives with coefficients and their de\,Rham--Betti realization}
\label{SS:dRB-Andre}

To a smooth projective variety $X$ defined over $K$, one associates its \emph{de\,Rham--Betti cohomology groups}
$$\HH^n_\dRB(X,\Q(k)) =_{\mathrm{def}} \big(\HH^n_{\dR}(X/K), \HH_{\B}^n(X_\C^\an,\Q(k)),  c_X\big),$$
where 
$c_X : \HH^{n}_\dR(X/K) \otimes_K \C \stackrel{\simeq}{\longrightarrow} \HH^{n}_\B(X_\C^\an,\Q)\otimes_\Q \C$ is Grothendieck's period comparison isomorphism~\eqref{E:comparison}. These are objects in $\mathscr{C}_{\dRB,K_\dR,\Q_\B}$.

Let  $\mathsf{M}^\And_K$ be the category of Andr\'e motives over $K$. By  \cite[\S 4]{AndreIHES},
the category  $\mathsf{M}^\And_K$ is graded Tannakian semi-simple over~$\Q$, neutralized by the fiber functor given by the Betti cohomology realization functor 
$$\omega_B :  \mathsf{M}_K^\And \to \mathrm{Vec}_\Q,\quad M:=(X,p,n) \mapsto \HH_{\B}^*(M) := p_*\HH_\B^*(X^\an_\C,\Q(n)).$$
Under the de\,Rham realization functor $\omega_{\dR} : \mathsf{M}_K^\And \to \mathrm{Vec}_K$, motivated cycles are mapped to classes lying in $F^0$ for the Hodge filtration. In addition these are compatible with the canonical comparison isomorphisms, so that motivated cycles are \emph{absolute Hodge} in the sense of Deligne~\cite{Deligne} and in particular they are also \emph{de\,Rham--Betti}.
 As such, there is a well-defined faithful realization functor 
$$\rho_{\dRB} : \mathsf M_K^\And \to \mathscr C_{\dRB,K,\Q}, \quad
M=(X,p,n) \mapsto (p_*\HH_\dR^*(X/K), p_*\HH^*_\B(X^\an_\C,\Q(n)),p_*\circ c_X \circ p_*),$$ and we may speak of de\,Rham--Betti classes on an Andr\'e motive $M$.
\medskip

More generally, 
	let $\mathsf{M}^{\And}_{K} \otimes L$ be the $L$-linear category of Andr\'e motives over $K$ with $L$-coefficients\,; 
it is the pseudo-abelian envelope of the base-change to $L$ of~$\mathsf{M}^{\And}_{K}$. 
An object  of $\mathsf{M}^{\And}_{K} \otimes L$ is a triple $(X,p,n)$ with $X$ a smooth projective variety over $K$, $p$ a motivated  idempotent correspondence with $L$-coefficients, and $n\in \Z$.
The category  $\mathsf{M}^{\And}_{K} \otimes L$ is graded Tannakian semi-simple over~$L$, neutralized by the fiber functor given by the Betti cohomology realization functor 
$$\omega_B :  \mathsf{M}_K^\And \otimes L \to \mathrm{Vec}_L,\quad M:=(X,p,n) \mapsto \HH_{\B}^*(M) := p_*\HH_\B^*(X^\an_\C,L(n)).$$

Given field extensions $K\subseteq K'$ and $L \subseteq L'$, 
the base-change of $M$ to $\mathsf{M}^{\And}_{K'} \otimes L'$ is denoted by $M_{K'}\otimes_L L'$.
Motivated classes on $M\otimes_L L'$, also called \emph{$L'$-motivated classes} on $M$, are by definition $L'$-linear combinations of motivated classes on $M$.

We make the basic but important observation that, if $L \nsubseteq K$, although one may consider 
$\HH^j_{\dRB}(X,\Q(k)) \otimes L$ as an object in $\mathscr C_{\dRB,K,L}$ for $X$ smooth projective over $K$ and although there is indeed a Betti realization functor $\omega_{\B}\colon \mathsf{M}^{\And}_{K} \otimes L \to\mathrm{Vec}_L$,
there is a priori no linear functor $\mathsf{M}^{\And}_{K} \otimes L \to \mathrm{Vec}_K$ 
and hence no de\,Rham--Betti realization functor $\mathsf{M}^{\And}_{K} \otimes L \to \mathscr C_{\dRB,K,L}$.
For instance, if $E$ is an elliptic curve over $\Q$ with CM by a field~$L$, 
then $\h^1(E)\otimes L$ splits as the direct sum of two non-trivial motives, whereas the $K$-vector space $\HH^1_\dR(E)$ does not admit any non-trivial motivic splitting.

However, if $L\subseteq K$, we do have a well-defined de\,Rham--Betti realization functor
$$\rho_{\dRB} \colon 
\mathsf M_K^\And \otimes L \to \mathscr C_{\dRB,K,L} $$
defined as above.
Given an Andr\'e motive $M \in M_K^\And \otimes L  $, we also denote abusively by $M$ its de\,Rham--Betti realization in $\mathscr C_{\dRB,K,L} $.

Note from  Example~\ref{Ex:dRB-Qbar}  that it is a priori not clear that $L'$-de\,Rham--Betti classes on $M$ are $L'$-linear combinations of de\,Rham--Betti classes on $M$.
Nonetheless, we have the following fullness conjecture\,:
\begin{conj}\label{conj:dRB-mot}
	The realization functor $\rho_{\dRB} : \mathsf M_K^\And\otimes L \to \mathscr C_{\dRB,K,L}$ is full.
	In other words, de\,Rham--Betti classes on Andr\'e motives over $K$ with $L$-coefficients are motivated.
\end{conj}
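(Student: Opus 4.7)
The conjecture is a substantial weakening of the Grothendieck period conjecture and is open in general; the plan I would follow is to reduce it via the Tannakian machinery of \S\ref{S:tannaka} to a transcendence-theoretic input, and then to handle that input in the restricted geometric setting where W\"ustholz's analytic subgroup theorem applies. The first step is reformulation: by the dictionary recalled in \S\ref{SS:neutraltannaka}, fullness of $\rho_\dRB$ on the subcategory $\langle M\rangle$ generated by a fixed motive $M$ is equivalent to the inclusion $G_\dRB(\rho_\dRB M)\hookrightarrow G_\And(M)$, viewed inside $\operatorname{GL}(\omega_\B(M))$, being an equality; dually, after passing to $\overline\Q$, to the torsor equality $\Omega_M^\dRB=\Omega_M^\And$. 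Both sides are geometrically well behaved: $\Omega_M^\dRB$ is connected by Theorem~\ref{T:connectedness}, and $G_\And(M)$ is reductive because $\mathsf{M}_K^\And$ is semisimple (\S\ref{S:mot}).

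Once in this form, reductivity of $G_\And(M)$ means the equality of groups is detected invariant-by-invariant in tensor representations of $\omega_\B(M)$: I would therefore have to exhibit, for every de Rham--Betti class on every tensor space $M^{\otimes n}\otimes(M^\vee)^{\otimes m}$, a motivated representative. For an abelian motive $M=\h(A)$, standard Tannakian bookkeeping reduces this to the codimension-$1$ case on products of abelian varieties; via Andr\'e's result that the Kuga--Satake correspondence is motivated and defined over $\overline\Q$, the same reduction propagates the statement to transcendental $\HH^2$ of hyper-K\"ahler varieties through their associated abelian varieties. The transcendence engine driving these reductions is W\"ustholz's analytic subgroup theorem applied to semi-abelian extensions, which forces every codimension-$1$ de Rham--Betti class on an abelian variety to be algebraic, and therefore motivated.

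The hard part, and the reason the conjecture cannot be proved at this level of generality, is that W\"ustholz's theorem is essentially a statement about $1$-motives: it gives no direct handle on de Rham--Betti classes of codimension~$\geq 2$, nor on motives lying outside the Tannakian category generated by abelian motives and Kuga--Satake pieces. For instance, no analogous tool is available for the transcendental $\HH^3$ of a rigid Calabi--Yau threefold. I therefore expect that the best one can extract from the present framework is exactly what the introduction announces: the conjecture for products of elliptic curves, for certain abelian surfaces and their powers, and for motives generated inside $\mathsf{M}^\And_{\overline\Q}$ by abelian varieties and hyper-K\"ahler varieties under suitable hypotheses on the Picard rank. A proof of Conjecture~\ref{conj:dRB-mot} in full would require a genuinely new transcendence ingredient capable of witnessing higher-weight period relations.
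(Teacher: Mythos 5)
This is a conjecture, not a theorem; the paper offers no proof and none is expected, so there is nothing to compare against. You correctly recognize that it is open, and your sketch of the Tannakian reduction (fullness of $\rho_\dRB$ on $\langle M\rangle$ being equivalent to $G_\dRB(M)=G_\And(M)$, detected on invariants because $G_\And(M)$ is reductive), together with W\"ustholz's analytic subgroup theorem as the transcendence input and the Kuga--Satake correspondence as the bridge to hyper-K\"ahler varieties, is an accurate summary of the strategy the paper uses for its partial results.
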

As we will see in Proposition~\ref{P:torsor-fullyfaithful}, Conjecture~\ref{conj:dRB-mot} is a particular instance
of the Grothendieck Period Conjecture~\ref{C:GPC} below. 
For now,
the following easy (non-Tannakian) lemma reduces, in particular, Conjecture~\ref{conj:dRB-mot} to the case $K=\overline \Q$ (see also Lemma~\ref{L:torsor-descent}) and in fact to the case $L=K=\overline \Q$\,:

\begin{lem}\label{L:descent}
	Let $L\subseteq K\subseteq \overline{\Q}$ be any fields, and let $M\in \mathsf M_K^\And\otimes L $ be an Andr\'e motive over~$K$ with $L$-coefficients. Let $K\subseteq K'\subseteq \overline{\Q}$ and $L\subseteq L'\subseteq L'' \subseteq \overline{\Q}$ be any field extensions. 
	If every $L''$-de\,Rham--Betti class on $M_{K'}$ is $L''$-motivated, then  every $L'$-de\,Rham--Betti class on $M$ is $L'$-motivated.
\end{lem}
\begin{proof}
	First it is clear that 	if every $L''$-de\,Rham--Betti class on $M_{K'}$ is $L''$-motivated, then  every $L'$-de\,Rham--Betti class on $M_{K'}$ is $L'$-motivated.
    (Note that $L''$-de\,Rham--Betti or $L''$-motivated classes on $M$ make sense for any $L\subseteq L'' \subseteq \overline \Q$ and that we are not requiring that $L''\subseteq K'$).
	Second consider a $L'$-de\,Rham--Betti class  $\alpha = (\alpha_{\dR}, \alpha_{\B})$  on $M$. 
	Its base-change to $M_{K'}$ is $L'$-motivated by assumption and so its base-change to $M_{\overline \Q}$ is also $L'$-motivated, i.e. the class of an $L'$-motivated cycle $z$ on $M_{\overline \Q}$. 
	We have to show that $z$ is defined over $K$. 
	Recall from \cite[Scolie~p.17]{AndreIHES} that  the Galois group $\Gal(\overline \Q/K)$ acts naturally on the space of motivated cycles on~$M_{\overline \Q}$ and
	the space of motivated cycles on $M$ is exactly the space of $\Gal(\overline \Q/K)$-invariant motivated cycles on~$M_{\overline \Q}$.
	Since $L'$-motivated cycles are simply $L'$-linear combinations of motivated cycles, the above also holds for $L'$-motivated cycles.
	 Let $g\in \Gal(\overline \Q /K)$, then the $L'$-de\,Rham--Betti class associated with $g(z)$ has de\,Rham component equal to $g(\alpha_{\dR}) = \alpha_{\dR}$. 
	 This implies that the cohomology class of $g(z)$ is the same as that of $z$ and so $g(z)=z$.
\end{proof}

\subsection{The Grothendieck period conjecture}\label{SS:GPC}
We generalize the Tannakian treatments of  the Grothendieck Period Conjecture given in \cite[\S 7.5]{AndreBook} and \cite[\S 2.2.2]{BostCharles} to motives with coefficients.

\begin{defn}[Motivated Galois group and motivated torsor of periods]
	Let  $M \in \mathsf{M}^\And_{K}\otimes L$ be an Andr\'e motive over $K$ with $L$-coefficients.
	The \emph{motivated Galois group} $G_{\And}(M)$ of $M$ is the Tannakian fundamental group
	$$G_{\And}(M) =_{\mathrm{def}} \operatorname{Aut}^\otimes(\omega_{\B}|_{\langle M\rangle}).$$
	If $L\subseteq K$, the \emph{motivated torsor of periods} of $M$ is the Tannakian torsor
	$$\Omega^\And_M =_{\mathrm{def}} \operatorname{Iso}^{\otimes}(\omega_{\dR}|_{\langle M\rangle}, \omega_{\B}|_{\langle M\rangle} \otimes_{L} K)\,;$$
	 it is a torsor under $\operatorname{Aut}^\otimes(\omega_{\B}|_{\langle M\rangle}\otimes_L K)$, which coincides with  $G_{\And}(M)_{K}$ by \cite[Rmk.~3.12]{DeligneMilne}.
\end{defn}

Since the neutral Tannakian category $\mathsf{M}^\And_K\otimes L$ is semi-simple, the motivated Galois group $G_{\And}(M)$ is reductive. 
In addition, $G_{\And}(M)$ is the closed subgroup of $\mathrm{GL}(\omega_B(M))$ that fixes motivated classes inside tensor spaces $ \bigoplus_{\mathrm{finite}} M^{\otimes n_i}\otimes (M^\vee)^{\otimes m_i}$.
Thus, for a field extension $L\subseteq L'$, since $L'$-motivated classes are by definition $L'$-linear combinations of motivated classes, the natural closed embedding 
$G_\And(M\otimes_L L') \subseteq (G_\And(M))_{L'}$ is an equality.
Likewise, if $L\subseteq K$, the motivated torsor of periods $\Omega^\And_M$ has the following description in terms of invariants\,: it is the intersection of the $\Omega_{\alpha}$ as in~\eqref{eq:Omega_alpha}, where now $\alpha$ runs through the motivated classes on tensor spaces $M^{\otimes n} \otimes (M^\vee)^{\otimes m}$.  Assuming $L\subseteq L'\subseteq K$, the natural closed embedding $\Omega^\And_{M\otimes_L L'} \subseteq \Omega^\And_M$ is an equality.
\medskip

 A homological motive $M$ over $K$ with $L$-coefficients is an object of the form $M=(X,p,n)$ with $X$ smooth projective over $K$ of pure dimension $d_X$, $p$ an idempotent in $\operatorname{im}(\CH^{d_X}(X\times X)\otimes L \to \HH^{2d_X}_B((X\times X)^\an_\C,L(d_X))$ and $n$ an integer.
 
 \begin{defn}[Motivic torsor of periods]
 	\label{D:torsormotperiods}
Assume $L\subseteq K$.
The \emph{motivic torsor of periods} $\Omega^\mot_M$ of a homological motive $M$ over $K$ with $L$-coefficients is defined as the intersection of the $\Omega_{\alpha}$ as in~\eqref{eq:Omega_alpha}, where $\alpha$ runs through the algebraic classes on tensor spaces $M^{\otimes n} \otimes (M^\vee)^{\otimes m}$. 
 \end{defn}
We note that this torsor has a Tannakian description in case $X$ satisfies  Grothendieck's standard conjectures\,; see \cite[\S 7.5.2]{AndreBook}.
 \medskip

Recall that given an Andr\'e motive $M$ we abusively denote by $M$ its de\,Rham--Betti realization, so that $\Omega_M^\dRB$ means $\Omega^\dRB_{\rho_{\dRB}(M)}$.
From the general theory of neutral Tannakian categories,
 or more simply
from the descriptions above, we have closed immersions $\Omega_M^\dRB \subseteq \Omega^\And_M$ for $M$ an Andr\'e motive over $K$ with $L$-coefficients and  $\Omega^\And_M \subseteq \Omega^\mot_M$ for $M$ a homological motive over $K$ with $L$-coefficients.

\begin{conj}[Grothendieck Period Conjecture]\label{C:GPC}
	Let $L\subseteq K \subseteq \overline \Q$ be fields.
\begin{enumerate}[(i)]
\item Let $M $ be an Andr\'e motive over~$K$ with $L$-coefficients.
We say $M$ satisfies the \emph{motivated version of the Grothendieck period conjecture} if the inclusions 
$$Z_M \subseteq \Omega_M \subseteq \Omega^\dRB_M \subseteq \Omega^\And_M $$ 
are equalities.
\item Let $M$ be a homological motive over~$K$  with $L$-coefficients. 
 We say  $M$ satisfies the \emph{Grothendieck period conjecture} if the inclusions 
$$Z_M \subseteq \Omega_M \subseteq  \Omega^\dRB_M \subseteq \Omega^\And_M \subseteq \Omega^\mot_M $$ are equalities.
\end{enumerate}
\end{conj}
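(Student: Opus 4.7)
The plan is to verify the chain of inclusions $Z_M \subseteq \Omega_M \subseteq \Omega^\dRB_M \subseteq \Omega^\And_M$ (and further $\subseteq \Omega^\mot_M$ in case (ii)) one at a time, working from the outside in. The outermost equality $\Omega^\And_M = \Omega^\mot_M$ amounts to the statement that motivated classes on tensor spaces of $M$ are algebraic; this is essentially the Hodge conjecture for powers of the underlying variety together with Grothendieck's standard conjectures, and is known for $M$ in the Tannakian subcategory generated by abelian varieties by Theorem~\ref{T:Andre-abelian}. The equality $\Omega^\dRB_M = \Omega^\And_M$ is the motivated de\,Rham--Betti conjecture; by a Tannakian argument it reduces to showing that de\,Rham--Betti classes on tensor spaces $M^{\otimes n}\otimes (M^\vee)^{\otimes m}$ are motivated.

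For the two innermost inclusions $Z_M \subseteq \Omega_M \subseteq \Omega^\dRB_M$, I would first exploit Theorem~\ref{T:connectedness} together with Proposition~\ref{P:barQ-torsor}: the middle inclusion becomes equality once the $\overline\Q$-torsor $\Omega^{\overline\Q-\dRB}_{M\otimes\overline\Q} = \Omega_M$ is shown to descend to a $\Q$-form inside $G_\dRB(M)$, which by Proposition~\ref{P:barQ-dRB} is equivalent to asserting that every $\overline\Q$-de\,Rham--Betti class on a tensor space is a $\overline\Q$-linear combination of de\,Rham--Betti classes. For the left-most inclusion $Z_M \subseteq \Omega_M$ I would use the annihilator criterion of Definition~\ref{D:ann} and the proposition preceding Example~\ref{ex:Z-Omega}, showing that annihilators of $\omega$ and $\gamma$ cut out subobjects in the appropriate Tannakian category. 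In the abelian setting the crucial analytic input is W\"ustholz's analytic subgroup theorem, which delivers simultaneously the reductivity of $G_\dRB(M)$ and the algebraicity of codimension-one de\,Rham--Betti classes, and thereby forces the inclusions of reductive groups $G_\And(M)\subseteq G_\dRB(M)$ to be an equality on tensor invariants of weight $\leq 2$.

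The fundamental obstacle is that the conjecture is open in essentially all nontrivial cases. Even for a single non-CM elliptic curve $E$, equality $\Omega^\dRB_E = \Omega^\And_E$ would force the transcendence degree of the field of periods to be $4$, whereas Chudnovsky's theorem establishes only $\geq 2$; for CM elliptic curves the equality is known only because Chudnovsky's bound coincides with $\dim G_\And(E) = 2$. No currently available transcendence input beyond W\"ustholz's theorem controls higher-degree polynomial relations among periods. The realistic strategy, and the one that can actually be pursued for the classes of motives treated in the paper -- products of elliptic curves, powers of abelian surfaces with extra endomorphisms, and Kuga--Satake partners of hyper-K\"ahler varieties with $b_2 > 3$ -- is therefore \emph{not} to attack Conjecture~\ref{C:GPC} head-on, but to combine the analytic subgroup theorem (applied to codimension-one classes on a generating abelian variety) with the connectedness of $\Omega^\dRB_M$ and the rigidity of the motivic Galois group, so as to force the inclusion of reductive groups $G_\And(M) \subseteq G_\dRB(M)$ to be an equality on purely group-theoretic grounds.
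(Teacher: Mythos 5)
The target statement is Conjecture~\ref{C:GPC} itself, which is a \emph{definition} of what it means for a motive to satisfy the Grothendieck period conjecture; it is not a theorem and the paper offers no proof of it. You correctly recognize this, and your decomposition of the chain $Z_M\subseteq\Omega_M\subseteq\Omega^\dRB_M\subseteq\Omega^\And_M\subseteq\Omega^\mot_M$ is accurate: the outermost equality reduces to the standard/Hodge conjectures (algebraicity of motivated classes), $\Omega^\dRB_M\subseteq\Omega^\And_M$ is the motivated de\,Rham--Betti conjecture (Conjecture~\ref{C:dRB}, handled via Proposition~\ref{P:torsor-fullyfaithful}), the middle inclusion is controlled by Propositions~\ref{P:barQ-torsor} and~\ref{P:barQ-dRB}, and the innermost by the annihilator criterion preceding Example~\ref{ex:Z-Omega}. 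Your diagnosis of the obstruction (Chudnovsky's $\geq 2$ bound coincides with $\dim G_\And(E)=2$ only in the CM case, so no current transcendence input reaches the non-CM case) and your identification of the realistic strategy (connectedness from Theorem~\ref{T:connectedness}, the analytic subgroup theorem and its $\overline\Q$-version from Propositions~\ref{subgroupmotives} and~\ref{Qbarsubgroupmotives}, then reductive-group rigidity to force $G_\dRB(M)=G_\And(M)$) is precisely the route the paper pursues in Theorems~\ref{T:dRB-elliptic}, \ref{T:abelian-surfaces}, and~\ref{T:main-GPC-maxPic}. In short, there is no ``paper proof'' to compare against; your analysis of the conjecture's structure and of the accessible special cases is correct and aligned with the paper.
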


Note that Conjecture~\ref{C:GPC} predicts that $\Omega_M^{\And}$ is connected for an Andr\'e motive $M$ over $K$  with $L$-coefficients and that $G_\And(M)$ is connected in case $K=\overline \Q$.
Note also that, for a field extension $L\subseteq L'\subseteq K$, since $Z_M = Z_{M\otimes_LL'}$ 
 and   $\Omega^\And_M = \Omega^\And_{M\otimes_LL'}$, we have the equivalence $Z_M = \Omega_M^\And \Leftrightarrow Z_{M\otimes_LL'} = \Omega^\And_{M\otimes_LL'}$, and similarly for the motivic torsor in place of the motivated torsor.

\begin{ex}[Torsor of periods of Artin motives] 
	Let $L=\Q$ and let $K\subseteq F \subseteq \overline \Q$ be a finite extension of~$K$. 
	Consider the \emph{Artin motive} $M:=\h(\Spec F)$ over $K$. 
	In that case all three $K$-torsors introduced above agree and we have $\Omega_M = \Omega_M^\And = \Omega_M^\mot = \Spec F^g$ as $K$-torsors under the constant group scheme $\operatorname{Gal}(F^{g}/K)$, 
	where $F^g$ denotes the Galois closure of $F$ inside $\overline \Q$. Moreover $c_M \in \Omega_M(\C) = \Hom_K(F,\C)$ is the canonical element. As such, $M$ satisfies the Grothendieck Period Conjecture.
\end{ex}

\subsection{The de\,Rham--Betti Conjecture}
In this work, we will address the following special instance of  Conjecture~\ref{C:GPC}\,:

\begin{conj}[De\,Rham--Betti Conjecture]\label{C:dRB}
Let $L\subseteq K \subseteq \overline \Q$ be fields.
	\begin{enumerate}[(i)]
		\item Let $M$ be an Andr\'e motive over~$K$ with $L$-coefficients. 
		  We say that $M$ satisfies the \emph{motivated de\,Rham--Betti conjecture} if the inclusion
		$$\Omega^\dRB_M \subseteq \Omega^\And_M $$ 
		is an equality.
		\item Let $M$ be a homological motive over~$K$  with $L$-coefficients. 
		 We say that $M$ satisfies the \emph{de\,Rham--Betti conjecture} if the inclusions 
		$$ \Omega^\dRB_M \subseteq \Omega^\And_M \subseteq \Omega^\mot_M $$ are equalities.
	\end{enumerate}
\end{conj}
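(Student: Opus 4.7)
Since the statement is formally a conjecture/definition rather than a theorem, ``proving it'' means exhibiting classes of Andr\'e motives for which the torsor inclusion $\Omega^\dRB_M \subseteq \Omega^\And_M$ can be shown to be an equality. The plan is to approach this via the Tannakian dictionary. The inclusion of torsors corresponds dually to an inclusion of $K$-group schemes, and both torsors are cut out, inside $\mathrm{Iso}_K(\HH_\dR(M), \HH_\B(M) \otimes K)$, by vanishing of motivated classes on one hand and of de\,Rham--Betti classes on the other (cf.\ Definition~\ref{D:Tantorsorperiods}). Thus the equality $\Omega^\dRB_M = \Omega^\And_M$ is equivalent to the assertion that every de\,Rham--Betti class in every tensor space $M^{\otimes n} \otimes (M^\vee)^{\otimes m}$ is motivated.

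To establish this for a given $M$, I would proceed as follows. First, I would invoke the connectedness of $\Omega^\dRB_M$ (Theorem~\ref{T:connectedness}) to force $G_\dRB(M)$ to be connected, which controls the ``component-group part'' of the problem purely formally. Second, I would try to show that $G_\dRB(M)$ is reductive: for motives built from abelian varieties, this is accessible because codimension-one de\,Rham--Betti classes on abelian varieties are algebraic (a consequence of W\"ustholz' analytic subgroup theorem, mentioned after Theorem~\ref{T:main-elliptic}), providing a polarization on $\HH_\B(M)$. Third, having both $G_\dRB(M)$ and $G_\And(M)$ reductive and connected, with $G_\dRB(M) \subseteq G_\And(M)$, the equality would reduce to showing that $G_\dRB(M)$ is not a \emph{proper} reductive subgroup, which one can attack on a case-by-case basis: for products of elliptic curves one invokes the validity of the Hodge and Mumford--Tate conjectures combined with Chudnovsky's theorem for the CM factors; for hyper-K\"ahler motives one routes the argument through the Kuga--Satake correspondence, which by Andr\'e's Theorem~\ref{thm:KS-motivated} is motivated and defined over $\overline\Q$, thereby reducing the question to the abelian case.

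The principal obstacle, and the point where the argument genuinely requires transcendence theory, is the reductivity step. In the Hodge-theoretic setting, reductivity of the Mumford--Tate group is a formal consequence of polarizations, but de\,Rham--Betti objects admit no intrinsic notion of polarization or weight filtration, as the non-semi-simple examples in \S\ref{S:dRB} show. Hence the existence of enough de\,Rham--Betti endomorphisms forcing reductivity can only be imported from an external transcendence input, in practice W\"ustholz' analytic subgroup theorem. A secondary obstacle is that, even once reductivity is obtained, $G_\dRB(M)$ is not \emph{a priori} known to contain the scalar matrices (the image of the Deligne torus in the Hodge-theoretic picture), so one cannot directly transfer weight arguments. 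This is the dichotomy between the $\Q$- and $\overline\Q$-coefficient versions of the conjecture, and in the $\overline\Q$-version an additional difficulty arises: tori over $\overline\Q$ are classified by their rank alone, so identifying $G_{\overline\Q-\dRB}(M\otimes\overline\Q)$ with the motivated Galois group at the level of the central torus genuinely needs something like Chudnovsky's theorem, which is why even the CM elliptic curve case is not formal.
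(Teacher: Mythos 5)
You correctly identify that the statement is a definition rather than a theorem, so there is nothing to prove, and your survey of the strategy — connectedness from Theorem~\ref{T:connectedness}, reductivity via W\"ustholz' analytic subgroup theorem, the equivalence via Proposition~\ref{P:torsor-fullyfaithful}, then case-by-case Lie-algebra analysis (with Kuga--Satake for hyper-K\"ahler motives and Chudnovsky for the CM factors) — accurately reflects how the paper establishes the conjecture in its specific cases (Theorems~\ref{T:dRB-elliptic}, \ref{T:abelian-surfaces}, \ref{T:main-GPC-maxPic}). No gap to report.
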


Conjecture~\ref{C:dRB} is analogous to the conjectural injectivity of (4.4) in \cite{Brown}.
The following proposition
shows that Conjecture~\ref{C:dRB}$(i)$ for an Andr\'e motive $M$ over $K$ is a strengthening of Conjecture~\ref{conj:dRB-mot} restricted to~$\langle M \rangle$.

\begin{prop}\label{P:torsor-fullyfaithful}
	 Let $M$ be an Andr\'e motive over~$K$ with $L$-coefficients. 
The following statements are equivalent\,:
	\begin{enumerate}[(i)]
		\item $M$ satisfies the motivated de\,Rham--Betti conjecture, i.e.,	$\Omega^\dRB_{M} = \Omega^\And_{M}$\,;
		\item The functor $(\rho_{\dRB})|_{\langle M\rangle} $ is full and $G_\dRB(M)$ is reductive.
	\end{enumerate}
	In particular, if $M$ satisfies the motivated de\,Rham--Betti conjecture, then any de\,Rham--Betti class on a tensor space $M^{\otimes n}\otimes (M^\vee)^{\otimes m}$ is motivated, and if $M=\h(X)$, then any de\,Rham--Betti class in $\HH_{\dRB}^j(X^n,\Q(k))$ is motivated and in particular zero if $j\neq 2k$.
\end{prop}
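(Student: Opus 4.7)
The plan is to translate the inclusion of torsors $\Omega^\dRB_M \subseteq \Omega^\And_M$ into a comparison of Tannakian groups via the dictionary of \S\ref{SS:neutraltannaka}. Both torsors sit inside $\operatorname{Iso}_K(M_\dR, M_\B \otimes K)$ and are torsors under $G_\dRB(M)_K$ and $G_\And(M)_K$ respectively, so the inclusion of torsors is equivalent to the corresponding closed immersion $G_\dRB(M) \hookrightarrow G_\And(M)$ of $\Q$-group schemes being an isomorphism. This homomorphism is induced by the realization functor $\phi := (\rho_{\dRB})|_{\langle M\rangle} \colon \langle M\rangle \to \mathscr C_{\dRB,K}$. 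First I would observe that every object of $\langle \rho_\dRB(M)\rangle$ is, by definition, a subquotient of a tensor space $\bigoplus_{\mathrm{finite}} \rho_\dRB(M)^{\otimes n_i} \otimes (\rho_\dRB(M)^\vee)^{\otimes m_i} = \rho_\dRB\bigl(\bigoplus_{\mathrm{finite}} M^{\otimes n_i} \otimes (M^\vee)^{\otimes m_i}\bigr)$ lying in the image of $\phi$; by \cite[Prop.~2.21(b)]{DeligneMilne} the induced $G_\dRB(M) \to G_\And(M)$ is automatically a closed immersion, so $(i)$ amounts to its being an isomorphism.

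For $(i) \Rightarrow (ii)$: the semisimplicity of $\mathsf M^\And_K$ gives via Proposition~\ref{P:tannaka} that $G_\And(M)$ is reductive, hence so is $G_\dRB(M)$ under the equality. An isomorphism of Tannakian groups is in particular an epimorphism, so by \cite[Prop.~2.21(a)]{DeligneMilne} the functor $\phi$ is full (it is already faithful, since a motivated correspondence is by construction determined by its Betti realization).

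For $(ii) \Rightarrow (i)$: I need to upgrade the closed immersion $G_\dRB(M) \hookrightarrow G_\And(M)$ to an isomorphism by showing it is also faithfully flat. By \cite[Prop.~2.21(a)]{DeligneMilne} this requires, in addition to fullness of $\phi$, that every subobject in $\mathscr C_{\dRB,K}$ of some $\rho_\dRB(N)$, with $N \in \langle M\rangle$, be the image under $\phi$ of a subobject of~$N$. Here the reductivity of $G_\dRB(M)$ enters crucially: by Proposition~\ref{P:tannaka}, $\langle \rho_\dRB(M)\rangle$ is then semisimple, so any such subobject is a direct summand cut out by an idempotent $e \in \End_{\mathscr C_{\dRB,K}}(\rho_\dRB(N))$. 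Fullness of $\phi$ lifts $e$ to an idempotent $e' \in \End_{\langle M\rangle}(N)$, and pseudo-abelianness of $\mathsf M^\And_K$ realizes $e'$ as the projector onto a direct summand of $N$ whose image under $\phi$ is the original subobject.

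For the concluding assertions, a de\,Rham--Betti class on a tensor space $N = M^{\otimes n} \otimes (M^\vee)^{\otimes m}$ is an element of $\Hom_{\mathscr C_{\dRB,K}}(\mathds 1, \rho_\dRB(N))$, which by fullness of $\phi$ is the image of an element of $\Hom_{\mathsf M^\And_K}(\mathds 1, N)$, that is, a motivated class. In the case $M = \h(X)$, the realization $\HH^j_\dRB(X^n, \Q(k))$ corresponds to $\h^j(X^n)(k)$, which lies in degree $j - 2k$ of the natural grading on $\mathsf M^\And_K$; morphisms $\mathds 1 \to \h^j(X^n)(k)$ in $\mathsf M^\And_K$ therefore vanish unless $j = 2k$. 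The main technical point is the direction $(ii) \Rightarrow (i)$, where both hypotheses are genuinely used: one must combine reductivity of $G_\dRB(M)$ (to put the $\mathscr C_{\dRB,K}$-side into a semisimple context) with pseudo-abelianness of $\mathsf M^\And_K$ (to descend idempotents) in order to verify the subobject-lifting hypothesis of \cite[Prop.~2.21(a)]{DeligneMilne}.
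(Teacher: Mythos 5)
Your argument is correct, but it takes a genuinely different route from the paper's. The paper's proof of $(i)\Longleftrightarrow(ii)$ is a short deduction from the invariant-tensor characterization of reductive Tannakian fundamental groups (Proposition~\ref{P:tannaka}): $G_\And(M)$ (always) and $G_\dRB(M)$ (once assumed reductive) are the closed subgroups of $\mathrm{GL}(\omega_\B(M))$ fixing, respectively, the motivated classes and the de\,Rham--Betti classes in tensor spaces, so equality of the two groups is the same as equality of the two sets of invariants, which is exactly fullness of $(\rho_\dRB)|_{\langle M\rangle}$. You instead pass through the Deligne--Milne criterion \cite[Prop.~2.21(a)]{DeligneMilne} for faithful flatness and verify the subobject-lifting hypothesis by hand; both work, the paper's being more economical and yours making the role of semi-simplicity more explicit.

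One step is imprecisely justified: ``fullness of $\phi$ lifts $e$ to an idempotent $e'$'' does not follow from fullness alone, which only gives \emph{some} preimage $\tilde e\in\End_{\langle M\rangle}(N)$, not an idempotent one. This is easy to repair: either note that $\End_{\langle M\rangle}(N)$ is a semi-simple finite-dimensional $\Q$-algebra (since $\langle M\rangle$ is semi-simple), so the surjection onto $\End_\dRB(\phi(N))$ splits and idempotents do lift; or, more simply, forget about idempotent lifting and set $N':=\mathrm{im}(\tilde e)\subseteq N$ --- then exactness of $\phi$ gives $\phi(N')=\mathrm{im}(\phi(\tilde e))=\mathrm{im}(e)$, the original subobject, which is precisely what \cite[Prop.~2.21(a)]{DeligneMilne} asks for and is what \S\ref{SS:neutraltannaka} means when it says the subobject condition is automatic when the target is semi-simple. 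With this minor fix your proof is complete, and the final paragraph establishing the consequences is the same as the paper's.
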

\begin{proof}
	The equivalence of $(i)$ and $(ii)$  is a direct consequence of basic facts concerning Tannakian categories 
	and the fact that  $G_\And(M)$ is reductive. 
	Assume now that  $(\rho_{\dRB})|_{\langle M \rangle} $ is full. Then
	any de\,Rham--Betti classes on  a tensor space $ M^{\otimes n}\otimes (M^\vee)^{\otimes m}$ is motivated.
	If now $M=\h(X)$, we note that $\mathds{1}(-1)$ is a direct summand of $\h(X)$, so that $\h(X^n)(k)$ is a direct summand of $\h(X)^{\otimes r} \otimes (\h(X)^\vee)^{\otimes s}$ for some $r,s\geq 0$. Hence any de\,Rham--Betti class
	 on $\h^j(X^n)(k) \otimes L$
	 is motivated. 
	That de\,Rham--Betti classes
	 on $\h^j(X^n)(k) \otimes L$
	 are zero for $j\neq 2k$ follows at once from the fact that an Andr\'e motive with no graded piece of degree zero does not support any non-zero motivated class.
\end{proof}

\begin{rmk}
    The conditions of Proposition~\ref{P:torsor-fullyfaithful} are further equivalent to 
    \begin{enumerate}[(i)]
		\item[$(iii)$] The functor $(\rho_{\dRB})|_{\langle M\rangle} $ is full and $G_\dRB(M)$ is observable in $G_\And(M)$.
	\end{enumerate}
    As asked in \cite[Rmk~3.2]{Andre-observability}, it would be interesting to establish in general the observability of $G_\dRB(M)$ in $G_\And(M)$, as was done in the $\ell$-adic setting by Andr\'e in \cite[Thm.~2.1]{Andre-observability}.
\end{rmk}

The motivated de\,Rham--Betti conjecture~\ref{C:dRB}$(i)$ is stable under direct summand\,:

\begin{prop}\label{P:directfactor}
	Let $N$ be an Andr\'e motive over $K$ with $L$-coefficients, 
	and let $M$ be an object in $\langle N \rangle$. 
	If the motivated de\,Rham--Betti conjecture~\ref{C:dRB}$(i)$ holds for $N$, then it holds for $M$. In other words,
	$$ \Omega^\dRB_N = \Omega^\And_N \ \Longrightarrow \  \Omega^\dRB_M = \Omega^\And_M.$$
\end{prop}
\begin{proof}
	By Proposition~\ref{P:torsor-fullyfaithful}, $\Omega^\dRB_N = \Omega^\And_N$ if and only if $G_\dRB(N)$ is reductive and every de\,Rham--Betti class on tensor spaces $N^{\otimes n} \otimes (N^\vee)^{\otimes m}$ is motivated. Now if $M$ belongs to $\langle N \rangle$, then $G_\dRB(M)$ is a quotient of $G_\dRB(N)$, hence is reductive, and every de\,Rham--Betti class on tensor spaces $M^{\otimes n} \otimes (M^\vee)^{\otimes m}$ is motivated. By Proposition~\ref{P:torsor-fullyfaithful} again, we conclude that $\Omega^\dRB_M = \Omega^\And_M$.
\end{proof}

\subsection{Descent properties} 
\label{SS:descent}
Let $L' \subseteq K' \subseteq \overline \Q$ be respective fields extensions of $L\subseteq K \subseteq \overline \Q$, and let $M \in \mathsf M^\And_K \otimes L$ be an Andr\'e motive over $K$ with $L$-coefficients.
From \S \ref{S:dRB-barQ} and \S \ref{SS:GPC}, we have the following chains of closed immersions
\begin{equation}\label{eq:base-change}
\begin{tikzcd}
Z_{M_{K'}\otimes L'} \arrow[r, hook] \arrow[d, equal] & \Omega_{M_{K'}\otimes L'} \arrow[r, hook] \arrow[d, equal] & \Omega^\dRB_{M_{K'}\otimes L'} \arrow[r, hook] \arrow[d, hook] & \Omega^\And_{M_{K'}\otimes L'} \arrow[d, equal] \\
Z_{M_{K'}} \arrow[d, hook] \arrow[r, hook]                         & \Omega_{M_{K'}} \arrow[d, hook] \arrow[r, hook]                          & \Omega_{M_{K'}}^\dRB \arrow[d, hook] \arrow[r, hook]           & \Omega_{M_{K'}}^\And \arrow[d, hook]                          \\
(Z_M)_{K'} \arrow[r, hook]                                         & (\Omega_M)_{K'} \arrow[r, hook]                                               & (\Omega^\dRB_M)_{K'} \arrow[r, hook]                           & (\Omega^\And_M)_{K'}                                         
\end{tikzcd}
\end{equation}
where the inclusion $Z_{M_{K'}} \hookrightarrow (Z_M)_{K'}$ is the inclusion of an irreducible component.

\begin{prop}\label{P:GPC-LdRB}
Let $M\in M_K^\And\otimes L$ be an Andr\'e motive over $K$ with $L$-coefficients and let $L\subseteq L'\subseteq \overline \Q$ be a field extension.
If the Grothendieck Period Conjecture~\ref{C:GPC}$(i)$
holds for $M_{\overline \Q}$, i.e., if $Z_{M_{\overline \Q}} = \Omega^\And_{M_{\overline \Q}}$, 
then $L'$-de\,Rham--Betti classes on $M$ are $L'$-motivated, and hence $L'$-linear combinations of $\Q$-de\,Rham--Betti classes.
\end{prop}
\begin{proof}
Since $Z_{M_{\overline \Q}} = \Omega_{M_{\overline \Q}}^\And \Leftrightarrow Z_{M_{\overline \Q}\otimes_L L'} = \Omega_{M_{\overline \Q}\otimes_LL'}^\And$, 
we find by Proposition~\ref{P:torsor-fullyfaithful} that $L'$-de\,Rham--Betti classes on $M_{\overline \Q}$ are $L'$-motivated, and then by Lemma~\ref{L:descent} that 
$L'$-de\,Rham--Betti classes on $M$ are $L'$-motivated.
\end{proof}

The following lemma reduces the Grothendieck Period Conjecture or the motivated de\,Rham--Betti conjecture for $M$ to that for $M_{\overline \Q}$, and even to that for $M_{\overline \Q}\otimes_L \overline \Q$ (and a similar statement holds for homological motives and motivic torsors of periods, in place of Andr\'e motives and motivated torsors of periods, provided the standard conjectures hold for $M$)\,:

\begin{lem}\label{L:torsor-descent}
	Let $M\in M_K^\And\otimes L$ be an Andr\'e 
	motive over $K$ with $L$-coefficients, and let $L\subseteq L'\subseteq \overline{\Q}$ be a field extension.
	The following implications hold\,:
\begin{align*}
Z_{M_{\overline \Q}\otimes L'} = \Omega^\And_{M_{\overline \Q}\otimes L'} \  & \Longrightarrow Z_M = \Omega^\And_M\\
\Omega_{M_{\overline \Q}\otimes L'}  = \Omega^\And_{M_{\overline \Q}\otimes L'} \ & \Longrightarrow \ \Omega_M = \Omega^\And_M\\
\Omega^\dRB_{M_{\overline \Q}\otimes L'} = \Omega^\And_{M_{\overline \Q}\otimes L'}  \ & \Longrightarrow \ \Omega^\dRB_M = \Omega^\And_M
\end{align*}
\end{lem}
\begin{proof}
 Let $\Theta_M$ be any of $Z_M$, $\Omega_M$ or $\Omega^\dRB_M$.
 By assumption, we have inclusions $\Omega^\And_{M_{\overline \Q}} \subseteq (\Theta_{M})_{\overline \Q} \subseteq (\Omega^\And_{M})_{\overline \Q}$.
		By \cite[\S4.6]{AndreIHES}, there is a short exact sequence
		$$ 1 \to G_\And(M_{\overline \Q}) \to G_\And(M) \to \Gal(F/K)\to 1 $$
		for some finite Galois extension $F$ of $K$.
		This implies that 
		\begin{equation}\label{decompositiontorsor} 
		(\Omega^\And_{M})_{\overline \Q}= \coprod_{\Gal(F/K)} \Omega^\And_{M_{\overline \Q}}. 
		\end{equation}
		The inclusion $\Theta_M \subseteq \Omega^\And_M$ is defined over $K$, and therefore the action of $\mathrm{Gal}(\overline \Q/K)$ preserves the subvariety $(\Theta_{M})_{\overline \Q} \subseteq (\Omega^\And_{M})_{\overline \Q}$.
		Since $\mathrm{Gal}(\overline \Q/K)$ permutes the components on the right-hand side of (\ref{decompositiontorsor}) and $\Omega^\And_{M_{\overline \Q}} \subseteq (\Theta_{M})_{\overline \Q}$, we conclude that $(\Theta_{M})_{\overline \Q} = (\Omega^\And_{M})_{\overline \Q}$.
\end{proof}

\begin{rmk}\label{rmk:torsor_notalgclosed}
	Let $N \in \mathscr C_{\dRB,K,\Q}$ be a de\,Rham--Betti object over $K$.
	If $N = \HH^*_{\dRB}(M)$ is the de\,Rham--Betti realization of an Andr\'e motive $M$ over $K$ and $\Omega_{M_{\overline \Q}} = \Omega^\And_{M_{\overline \Q}}$, 
	then Lemma \ref{L:torsor-descent} implies that $\Omega_N = \Omega^{\dRB}_{N \otimes K}$.
	We want to emphasize that this last equality is not true for a general de\,Rham--Betti object $N$ over $K$.
	For example, let $a \in \overline \Q$ be such that $a^n \notin K$ for all $n \ge 1$.
	Then we can define $$N := ( K, \Q, c_N ),$$ where $c_N$ is multiplication by $a$.
	It is easy to see that $\Omega_N = \Spec F^g$, where $F^g$ denotes the Galois closure of $K(a)$.
	But $(N\otimes K)^{\otimes m}$ does not admit a de\,Rham--Betti class for $m >0$, and therefore $\Omega^{\dRB}_{N \otimes K}$ is a torsor under $\mathds{G}_{m,K}$.
	Note that this is in contrast to Proposition \ref{P:barQ-torsor}, which holds for arbitrary de\,Rham--Betti objects.
\end{rmk}

\subsection{Shimura periods of CM abelian varieties and $\overline{\Q}$-de\,Rham--Betti classes}
	\label{SS:Shimura}

Let $E$ be a CM field of degree $2g$ and let $A$ be an abelian variety (of dimension~$g$) over $\overline{\Q}$ with CM by $E$.
The de\,Rham cohomology $\HH^1_\dR(A)$ has an $E$-eigenbasis $\{\omega_{1}, \dots, \omega_{g}, \eta_{1}, \dots, \eta_{g}\}$ with $\omega_{j}$ holomorphic one-forms on~$A$ and $\eta_{j}= \bar{\omega}_{j}$ anti-holomorphic one-forms on $A$.
The Betti cohomology $\HH^1_B(A_,\overline{\Q})$ has an $E$-eigenbasis $\{\phi_{1}, \dots, \phi_{g}, \bar{\phi}_{1}, \dots, \bar{\phi}_{g}\}$ with the property that the comparison isomorphism diagonalizes with respect to those bases, with diagonal entries denoted by $(\theta_{1}, \dots, \theta_{g}, \bar{\theta}_{1}, \dots, \bar{\theta}_{g})$
and satisfying
$\theta_{j} \bar{\theta}_{j} = 2\pi i$\,; see \cite[Rmk.~6.5]{GUY}.
The complex numbers $\theta_{j}$ and $\bar{\theta}_{j}$, which are defined up to multiplication by $\overline \Q^\times$, are called the \emph{Shimura periods} of $A$.

The purpose of this paragraph is simply to note that  monomial relations among  the Shimura periods $\theta_{j}$, $\bar\theta_{j}$ are exactly induced by $\overline{\Q}$-de\,Rham--Betti classes in $\h^1(A)^{\otimes n}\otimes \mathds{1}(k)$ for $n>0$ and $k\in \Z$.
To see this, first note that the relations $\theta_{j} \bar{\theta}_{j} = 2\pi i$ imply that any monomial relation among the $\theta_{j}, \bar{\theta}_{j} $ can be written as a monomial in the $\theta_{j}, \bar{\theta}_{j} $ equals a $\overline \Q$-multiple of a power of $2\pi i$.
Let then $P(X_i,Y_i) = \prod_{i=1}^g X_{i}^{a_{i}} Y_{i}^{b_{i}}$ 
be a monomial of total degree $n$ 
and assume that $P(\theta_{i},\bar{\theta}_{j})$
is a $\overline{\Q}$-multiple of $(2\pi i)^k$.
This relation is induced by integrating 
$\varpi = \bigotimes_{i=1}^g \omega_{i}^{\otimes a_{i}} \otimes \eta_{i}^{\otimes b_{i}}$  
along $\gamma = (2\pi i)^{-k} \bigotimes_{i=1}^g \gamma_{i}^{\otimes a_{i}} \otimes  \bar\gamma_{i}^{\otimes b_{i}}$ 
where $\{\gamma_{1}, \dots, \gamma_{g}, \bar{\gamma}_{1}, \dots, \bar{\gamma}_{g}\}$ is the dual basis of $\{\phi_{1}, \dots, \phi_{g}, \bar{\phi}_{1}, \dots, \bar{\phi}_{g}\}$.
The latter also exactly means that $\varpi$ defines a $\overline{\Q}$-de\,Rham--Betti class on 	$\h^1(A)^{\otimes n}\otimes \mathds{1}(k)$.
Conversely, any de\,Rham class on 	$\h^1(A)^{\otimes n}\otimes \mathds{1}(k)$ that extends to a $\overline{\Q}$-de\,Rham--Betti class provides monomial relations among the Shimura periods by integrating  along the various  classes $(2\pi i)^{-k} \bigotimes_{i=1}^g \gamma_{i}^{\otimes a_{i}} \otimes  \bar\gamma_{i}^{\otimes b_{i}}$ satisfying $\sum_i (a_i + b_i) = n$.

Likewise, one sees that monomial relations among the Shimura periods of all CM abelian varieties over $\overline \Q$ are exactly induced by $\overline \Q$-de\,Rham--Betti classes on products of CM abelian varieties.  Denote $\langle Ab^{\mathrm{cm}} \rangle$ the Tannakian subcategory of $\mathsf{M}^\And_{\overline \Q}\otimes \overline \Q$ generated by the motives of CM abelian varieties over $\overline \Q$. 
	Its motivated Galois group is the base-change to $\overline \Q$ of the so-called Serre torus. As such, any algebraic subquotient is reductive, and we get, as in \cite[Prop.~24.6.3.1(1)]{AndreBook}, that the realization functor $\langle Ab^{\mathrm{cm}} \rangle \to \mathscr C_{\dRB,\overline \Q, \overline \Q}$ is full if and only if the  \emph{monomial Shimura relations} of \cite[\S 24.4.4]{AndreBook} generate all monomial relations among the Shimura periods of CM abelian varieties. 
This in fact corrects \cite[Prop.~24.6.3.1(1)]{AndreBook}, where $\overline \Q$-coefficients were overlooked and where only the realization to $\mathscr C_{\dRB,\overline \Q, \Q}$ is considered. 
Moreover, contrary to what is claimed just before Corollary~2 in \cite{kahn}, 
the question of whether all monomial relations among Shimura periods of CM elliptic curves are generated by the monomial Shimura relations remains open in general.

\section{De\,Rham--Betti classes on abelian varieties}\label{S:ab}

The aim of this section is to generalize W\"ustholz's analytic subgroup theorem  by allowing $L$-coefficients\,; see Proposition~\ref{Qbarsubgroupmotives}.
As a consequence we establish Theorem~\ref{T:LdRB-abelian} giving that $L$-de\,Rham--Betti classes in $\h^2(A)(1)$ are $L$-algebraic for any abelian variety $A$ over $K$, and then proceed to proving Theorem~\ref{T:main-elliptic} concerning products of elliptic curves, but also Theorem~\ref{T:abelian-surfaces} concerning powers of abelian surfaces.

\subsection{Consequences of W\"ustholz's analytic subgroup theorem}

Let $M \in \mathscr C_{\dRB, \overline \Q_{\dR}, L_{\B}}$ and $\gamma \in M_{\B}^{\vee}$. The comparison $c_M: M_{\dR} \otimes_{\overline \Q} \C \stackrel{\simeq}{\longrightarrow} M_{\B} \otimes_L \C $ defines a $\overline \Q$-linear map
$$\int_{\gamma}: M_{\dR} \to \mathbb{C}, \,\,\, \omega \mapsto \gamma_{\C}(c_M(\omega_\C)).$$
As in Definition~\ref{D:ann}, we set
	 the \emph{annihilator} of $\gamma \in M_{\B}^{\vee}$ to be $\mathrm{Ann}(\gamma) =_{\mathrm{def}} \ker \int_{\gamma} \subseteq M_{\dR}$.\medskip

We denote by $\mathcal{AB} \subset \mathsf{M}^{\And}_{\overline \Q}$ the full abelian subcategory of the category of Andr\'e motives over~$\overline \Q$ generated by the motives $\h^1(A)$, where $A$ is an abelian variety over $\overline \Q$\,; it is equivalent to the category of abelian varieties over $\overline \Q$ up to isogeny.  \medskip

The following formulation of W\"ustholz's analytic subgroup theorem \cite{Wuestholz} is derived  from the more general formulation for 1-motives taken from \cite[Thm.~9.7]{HW}. 
It has the advantage that it admits a natural extension to motives with $\overline \Q$-coefficients (see \S \ref{SS:WuestholzQbar}) and will allow for applications regarding $\overline \Q$-de\,Rham--Betti classes.

\begin{prop}[Analytic subgroup theorem for abelian motives\,; {\cite[Thm.~9.7]{HW}}] \label{subgroupmotives}
	Let $M \in \mathcal{AB}$ and $\gamma \in \HH_{\B}(M)^{\vee}$.
	There exists a decomposition $$ M = M' \oplus M''$$ in the category $\mathcal{AB}$ such that $\gamma \in \HH_{\B}(M'')^{\vee}$ and $\mathrm{Ann}(\gamma) = \HH_{\dR}(M')$.
\end{prop}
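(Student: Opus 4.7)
The plan is to reduce to the case of a single abelian variety and then invoke W\"ustholz's analytic subgroup theorem together with Poincar\'e complete reducibility. Since $\mathcal{AB}$ is semisimple (as an abelian category equivalent to the isogeny category of abelian varieties over $\overline \Q$) and closed under direct sums, every object is, up to isogeny, of the form $M = \h^1(A)$ for some abelian variety $A/\overline \Q$. Under this identification, $\HH_\B(M) = \HH^1(A_\C^\an,\Q)$, $\HH_\B(M)^\vee = \HH_1(A_\C^\an,\Q)$, and $\HH_\dR(M) = \HH^1_\dR(A/\overline \Q)$. After clearing denominators, a class $\gamma \in \HH_\B(M)^\vee$ is represented by an element of the period lattice $\HH_1(A_\C^\an,\Z) = \ker\bigl(\exp_A : \mathrm{Lie}(A_\C) \to A(\C)\bigr)$; if $\gamma=0$ the claim is trivial, so assume $\gamma\neq 0$.

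Next I would apply W\"ustholz's analytic subgroup theorem: there exists a smallest abelian subvariety $B\subseteq A$, defined over $\overline \Q$, such that $\gamma$ is a period of $B$, i.e., such that $\mathrm{Lie}(B_{\overline\Q})$ is the smallest $\overline \Q$-subspace of $\mathrm{Lie}(A_{\overline \Q})$ whose complexification contains $\gamma$. The stronger conclusion I will need is that the resulting period pairing
\[
\textstyle\int_\gamma \colon \HH^1_\dR(B) \longrightarrow \C
\]
is \emph{injective} -- not merely on the Hodge-filtration piece $F^1 = \mathrm{Lie}(B)^\vee$, where injectivity is immediate from the minimality of $B$, but on the whole of $\HH^1_\dR(B)$. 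This stronger statement is precisely the content of the 1-motive formulation of the analytic subgroup theorem in \cite[Thm.~9.7]{HW}, which handles differentials of the second kind by enlarging the relevant commutative group and applying W\"ustholz to the universal extension of $B$ by a vector group.

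By Poincar\'e complete reducibility over $\overline\Q$, there exists an abelian subvariety $C\subseteq A$ with $B+C = A$ and $B\cap C$ finite, so $A$ is isogenous to $B\times C$. In $\mathcal{AB}$ this gives a decomposition $M = M' \oplus M''$ with $M'' := \h^1(B)$ and $M' := \h^1(C)$. By construction, $\gamma \in \HH_1(B_\C^\an,\Q) = \HH_\B(M'')^\vee$, which is the first required property.

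For the second property, decompose $\omega = \omega_B + \omega_C \in \HH^1_\dR(B)\oplus \HH^1_\dR(C) = \HH^1_\dR(A)$. Then
\[
\textstyle\int_\gamma \omega \;=\; \int_\gamma \omega_B \;+\; \int_\gamma \omega_C,
\]
and the second summand vanishes because $\omega_C$ is pulled back along the quasi-projection $A \to C$ while $\gamma \in \HH_1(B_\C^\an,\Q)$ maps to $0$ in $\HH_1(C_\C^\an,\Q)$. Hence $\int_\gamma\omega = \int_\gamma\omega_B$, which by the injectivity from the previous step vanishes iff $\omega_B=0$. This yields $\mathrm{Ann}(\gamma) = \HH^1_\dR(C) = \HH_\dR(M')$, as required. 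The main obstacle is the injectivity of $\int_\gamma$ on all of $\HH^1_\dR(B)$ and not just on its Hodge piece; this is exactly where the power of W\"ustholz's theorem (in its 1-motive incarnation) enters, and everything else is formal manipulation using semisimplicity of $\mathcal{AB}$ and the structure of abelian varieties up to isogeny.
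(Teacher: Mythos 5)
Your proposal is correct, but it takes a more explicit route than the paper. The paper simply cites \cite[Thm.~9.7]{HW} applied directly to $M$, then observes that $\mathcal{AB}$ is a full subcategory of the category of 1-motives that is stable under subquotients, and that the resulting short exact sequence splits because $\mathcal{AB}$ is semi-simple. You instead single out the smallest abelian subvariety $B\subseteq A$ with $\gamma\in\HH_1(B,\Q)$, establish injectivity of $\int_\gamma$ on all of $\HH^1_\dR(B)$, and then use Poincar\'e complete reducibility to build the complement $C$. This buys you an explicit description of the decomposition ($M''=\h^1(B)$ is the ``support'' of $\gamma$, $M'=\h^1(C)$ a complement), at the cost of a slightly longer argument. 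One small imprecision: the injectivity of $\int_\gamma$ on $\HH^1_\dR(B)$ is not ``precisely the content'' of \cite[Thm.~9.7]{HW}; rather, it is a consequence of applying that theorem (or, equivalently, the basic analytic subgroup theorem to the universal vector extension $B^\natural$) to $\h^1(B)$ and then invoking minimality of $B$ — you also need the fact, which you do not state, that $B^\natural$ has no proper algebraic subgroup surjecting onto $B$, so that the W\"ustholz subgroup of $B^\natural$ containing the lift $\tilde\gamma$ must be all of $B^\natural$. Once spelled out this is a standard argument, so there is no real gap, but the logical structure ends up being ``apply the theorem to $\h^1(B)$, then reassemble for $\h^1(A)$'', whereas the paper applies the theorem to $\h^1(A)$ in one stroke.
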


\begin{proof}
	This is \cite[Thm.~9.7]{HW}, where a stronger version for 1--motives is proved.
	More precisely, we obtain a short exact sequence
	$$0 \to M' \to M \to M'' \to 0 $$ in the category of 1--motives
	such that $\gamma \in \HH_{\B}(M'')^{\vee}$ and $\mathrm{Ann}(\gamma) = \HH_{\dR}(M')$.
	Since the category $\mathcal{AB}$ is a full subcategory of the category of 1--motives which is stable under taking subquotients, we know that $M', M'' \in \mathcal{AB}$. The short exact sequence is split since the category $\mathcal{AB}$ is semi-simple.
\end{proof}

\begin{rmk}
	Proposition~\ref{subgroupmotives} can also be derived without reference to 1-motives by applying the more classical version of the analytic subgroup theorem \cite[Thm.~6.2]{HW} to the universal vector extensions of abelian varieties.
\end{rmk}

\begin{rmk}
	Let $M \in \mathcal{AB}$. By applying the de\,Rham--Betti realization, Proposition \ref{subgroupmotives} tells us that for every $\gamma \in H_{\B}(M)^{\vee}$, there is a decomposition $H_{\dRB}(M) = N' \oplus N''$ in the category $\mathscr C_{\dRB}$ such that $\gamma \in (N''_{\B})^{\vee}$ and $\mathrm{Ann}(\gamma) = N'_{\dR}$.
	This is not true for an arbitrary de\,Rham--Betti object $ N \in \mathscr C_{\dRB}$ and therefore forces a strong restriction on the possible de\,Rham--Betti objects which arise as realizations of motives in $\mathcal{AB}$.
	As an example, consider the de\,Rham--Betti object
	\[
	N:=\left(N_\dR=\overline \Q^{\oplus 2}, N_\B = \Q^{\oplus 2}, c_N\right), \quad \text{with }c_N = \begin{pmatrix}
	\alpha & \beta \\ - \beta & \gamma
	\end{pmatrix},
	\quad \mathrm{degtr}_\Q \Q(\alpha, \beta, \gamma) = 3.
	\]
	Denote by $\gamma_1, \gamma_2$ the dual basis of the natural basis $e_1,e_2$ of $N_\B$.
	Similarly, denote by $\omega_1, \omega_2$ the natural basis of $N_{\dR}$.
	If we let $\gamma:= (\gamma_1, \gamma_2) \in (N_B^{\oplus 2})^{\vee}$, then $\mathrm{Ann}(\gamma) \subset N_{\dR}^{\oplus 2}$ is the one-dimensional subspace generated by $(\omega_2, \omega_1)$.
	But one checks that the de\,Rham--Betti object $N$ is simple, hence there does not exist a subobject $N' \subset N^{\oplus 2}$ such that $\mathrm{Ann}(\gamma) = N'_{\dR}$.
\end{rmk}

\begin{thm}[Andr\'e, Bost, W\"ustholz] \label{T:dRB1A}
	The functor $\mathcal{AB} \to \mathscr C_{\dRB}$ is fully faithful and its image is closed under taking subobjects. In particular, the essential image forms a full abelian subcategory of $\mathscr C_{\dRB}$ which is semi-simple.
\end{thm}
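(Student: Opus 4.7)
The core of the theorem is the assertion that the essential image is closed under subobjects; full faithfulness and semi-simplicity of the image will follow formally. My plan is therefore to concentrate on the following key statement: given $M \in \mathcal{AB}$ and a de\,Rham--Betti subobject $N = (N_\dR, N_\B, c_M|_{N_\dR \otimes \C}) \subseteq \HH_\dRB(M)$, there exists a submotive $M' \subseteq M$ in $\mathcal{AB}$ with $\HH_\dR(M') = N_\dR$ and $\HH_\B(M') = N_\B$.

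To produce $M'$, I would define it as the \emph{smallest} submotive of $M$ in $\mathcal{AB}$ whose Betti realization contains $N_\B$; this exists because $\mathcal{AB}$ is semi-simple so the class of submotives is stable under intersection. Then $c_M$ identifies $\HH_\dR(M') \otimes \C$ with $\HH_\B(M') \otimes \C \supseteq N_\B \otimes \C = c_M(N_\dR \otimes \C)$, so automatically $N_\dR \subseteq \HH_\dR(M')$. The hard part, and the only step where transcendence actually enters, is to show $N_\B = \HH_\B(M')$. I would argue by contradiction: if the inclusion is strict, pick a nonzero $\gamma \in \HH_\B(M')^\vee$ that vanishes on $N_\B$, extend it to $\tilde\gamma \in \HH_\B(M)^\vee$, and apply Proposition~\ref{subgroupmotives} to produce a decomposition $M = M^\flat \oplus M^\sharp$ with $\tilde\gamma \in \HH_\B(M^\sharp)^\vee$ and $\mathrm{Ann}(\tilde\gamma) = \HH_\dR(M^\flat)$. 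Since $\tilde\gamma$ annihilates $N_\B$ and $N_\B = c_M(N_\dR \otimes \C) \cap \HH_\B(M)$, one gets $N_\dR \subseteq \mathrm{Ann}(\tilde\gamma) = \HH_\dR(M^\flat)$, hence $N_\B \subseteq \HH_\B(M^\flat)$. Minimality of $M'$ then forces $M' \subseteq M^\flat$, so $\tilde\gamma$ vanishes on $\HH_\B(M')$, contradicting $\gamma \neq 0$.

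For full faithfulness, my plan is the standard graph argument: given a de\,Rham--Betti morphism $f \colon \HH_\dRB(M) \to \HH_\dRB(N)$ with $M, N \in \mathcal{AB}$, its graph $\Gamma_f$ is a de\,Rham--Betti subobject of $\HH_\dRB(M \oplus N) = \HH_\dRB(M) \oplus \HH_\dRB(N)$. By the closure-under-subobjects step, $\Gamma_f$ descends to a submotive $P \subseteq M \oplus N$ in $\mathcal{AB}$ whose first projection $P \to M$ realizes to an isomorphism. Because the Betti fiber functor on $\mathcal{AB}$ is faithful and $\mathcal{AB}$ is semi-simple (so that kernels and cokernels of morphisms that realize to zero are themselves zero), $P \to M$ is an isomorphism in $\mathcal{AB}$, and its inverse composed with the second projection realizes to $f$.

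Finally, for the last sentence, I would observe that $\mathcal{AB}$ is abelian and semi-simple by Theorem~\ref{T:Andre-abelian}, that the realization functor is exact (since $\omega_\B$ and $\omega_\dR$ are), and that closure under subobjects combined with full faithfulness implies closure under kernels, cokernels, and direct summands in $\mathscr C_\dRB$; thus the essential image inherits the abelian semi-simple structure of $\mathcal{AB}$. The only non-formal ingredient throughout is Proposition~\ref{subgroupmotives}, i.e.\ W\"ustholz' analytic subgroup theorem, and its use in Step 2 is the single substantive obstacle.
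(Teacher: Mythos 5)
Your proof is correct and takes essentially the same approach as the paper: both isolate the closure-under-subobjects step, reduce to a minimal submotive whose Betti realization contains $N_\B$ (the paper does this by replacing $M$ at the outset, you do it by introducing $M'$, which is the same reduction), apply Proposition~\ref{subgroupmotives} to a functional vanishing on $N_\B$, and derive a contradiction from the resulting decomposition; full faithfulness is then the identical graph argument in both.
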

\begin{proof}
	We first prove that the image is closed under taking subobjects.
	Let $M \in \mathcal{AB}$ and suppose that $0 \subsetneq N \subseteq \HH_{\dRB}(M)$ is a subobject in the category $\mathscr C_{\dRB}$ of de\,Rham--Betti objects.
	We may assume that $N$ is not contained in the de\,Rham--Betti realization of a submotive $M' \subsetneq M$, otherwise we replace $M$ by $M'$. We aim to show that $N = \HH_{\dRB}(M)$.
	To get a contradiction, assume that $N \subsetneq \HH_{\dRB}(M)$.
	Then we denote by $N' \in \mathscr C_{\dRB}$ the quotient of $\HH_{\dRB}(M)$ by $N$.
	If we choose $0 \not= \gamma \in {N'_{\B}}^{\vee}$, then Proposition \ref{subgroupmotives} gives a decomposition
	$M = M' \oplus M''$ in $\mathcal{AB}$ such that $0 \not= \gamma \in \HH_{\B}(M'')^{\vee}$ and $\mathrm{Ann}(\gamma) = \HH_{\dR}(M')$.
	But then $N_{\dR} \subseteq \mathrm{Ann}(\gamma) = \HH_{\dR}(M')$.
	This contradicts the assumption that $N$ is not contained in the realization of a proper submotive $M' \subsetneq M$.
	
	We now prove the full faithfulness. Let $M, N \in \mathcal{AB}$ and let $(f_{\dR}, f_{\B}): \HH_{\dRB}(M) \to \HH_{\dRB}(N)$ be a morphism in $\mathscr C_{\dRB}$. We denote by $\Gamma_{f_{\dR}} \subset \HH_{\dR}(M) \oplus \HH_{\dR}(N) $ and $\Gamma_{f_{\B}} \subset \HH_{\B}(M) \oplus \HH_{\B}(N)$ the graphs of $f_{\dR}$ and $f_{\B}$, respectively. Then $(\Gamma_{f_{\dR}}, \Gamma_{f_{\B}})$ defines a subobject of $\HH_{\dRB}(M) \oplus \HH_{\dRB}(N)$ in $\mathscr C_{\dRB}$.
	By the first part of the proof, this subobject is the realization of a submotive $\Gamma \subset M \oplus N$.
	The composition 
	$$\xymatrix{ r\colon \Gamma \subset M \oplus N \ar@{->>}[r]^{\qquad \mathrm{pr}_1} & M}$$ is an isomorphism.
	Then 
	$$\xymatrix{f\colon M \ar[r]^{r^{-1}\quad }& \Gamma \subset M \oplus N \ar@{->>}[r]^{\qquad \mathrm{pr}_2} &N}$$ 
	defines a morphism in $\mathcal{AB}$ whose de\,Rham--Betti realization is $(f_{\dR}, f_{\B})$.
\end{proof}

\begin{rmk} Theorem \ref{T:dRB1A} is stated in \cite[\S 7.5.3]{AndreBook} and established as a consequence of a weaker form of W\"ustholz's analytic subgroup theorem.
	 Bost \cite[Thm.~5.1 \&~5.3]{Bost} also
	 provides a proof of the full faithfulness based on the older transcendence theorems of Schneider and Lang.
	However, in the next subsection we will need the stronger version of the analytic subgroup theorem as formulated in Proposition \ref{subgroupmotives} to prove an analog of Theorem \ref{T:dRB1A} with $\overline \Q$--coefficients.
\end{rmk}

An easy consequence of Theorem~\ref{T:dRB1A} is the following\,:

\begin{thm}\label{T:H1}
	Let $X$ be a smooth projective variety over $K$. Then the de\,Rham--Betti object $\HH^1_\dRB(X,\Q)$ over $K$ does not have
	any odd-dimensional subobject. 
	In particular,  for any $k\in \Z$, any de\,Rham--Betti class on $\HH_\dRB^1(X,\Q(k))$ is zero.
\end{thm}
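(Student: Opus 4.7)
The plan is to deduce the statement from Theorem~\ref{T:dRB1A} (full faithfulness of $\mathcal{AB} \to \mathscr{C}_\dRB$ together with closure of the image under subobjects) by reducing to the Albanese variety.

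First, I would note that the Albanese morphism $X \to A := \mathrm{Alb}(X)$ induces a canonical isomorphism of de\,Rham--Betti objects $\HH^1_\dRB(A,\Q) \stackrel{\sim}{\longrightarrow} \HH^1_\dRB(X,\Q)$ (the isomorphism holds separately for Betti and de\,Rham cohomology and is compatible with the period comparison). Hence $\HH^1_\dRB(X,\Q)$ lies in the essential image of the realization functor $\mathcal{AB} \to \mathscr{C}_\dRB$, being the realization of $\h^1(A) \in \mathcal{AB}$.

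Next, by Theorem~\ref{T:dRB1A}, any subobject $N \subseteq \HH^1_\dRB(X,\Q)$ in $\mathscr{C}_\dRB$ is the realization of some submotive $M \subseteq \h^1(A)$ in $\mathcal{AB}$. Since $\mathcal{AB}$ is equivalent to the category of abelian varieties over $\overline{\Q}$ up to isogeny, $M \cong \h^1(B)$ for some isogeny factor $B$ of $A$. Therefore $\dim_\Q N = 2\dim B$ is even, proving the first assertion. To handle the Tate twist $\HH^1_\dRB(X,\Q(k)) = \HH^1_\dRB(X,\Q) \otimes \mathds{1}(k)$, I would observe that tensoring with $\mathds{1}(k)$ is an auto-equivalence of $\mathscr{C}_\dRB$ preserving dimensions of the underlying vector spaces, so every subobject of $\HH^1_\dRB(X,\Q(k))$ is still even-dimensional.

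For the last clause, a de\,Rham--Betti class in $\HH^1_\dRB(X,\Q(k))$ is a morphism $\alpha \colon \mathds{1} \to \HH^1_\dRB(X,\Q(k))$; since $\mathds{1}$ is simple, if $\alpha \neq 0$ then its image is a $1$-dimensional subobject, contradicting the evenness just established. I do not anticipate a genuine obstacle: all the transcendence input has been absorbed into Theorem~\ref{T:dRB1A}, and the remainder is a formal dimension count after the Albanese reduction.
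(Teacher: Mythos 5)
Your proposal is correct and follows essentially the same route as the paper: reduce $\HH^1_\dRB(X,\Q)$ to the $\h^1$ of an abelian variety, apply Theorem~\ref{T:dRB1A} to conclude all subobjects are realizations of objects of $\mathcal{AB}$ and hence even-dimensional, then rule out a one-dimensional image of a nonzero de\,Rham--Betti class. The only cosmetic difference is that you use the Albanese morphism $X \to \mathrm{Alb}(X)$ whereas the paper passes to the dual abelian variety $\mathrm{Pic}^0_X$ via the Poincar\'e bundle; these give the same isomorphism class of $\h^1$ and the argument is otherwise identical.
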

\begin{proof} It is enough to prove the theorem after base-changing $K$ to $\overline \Q$, and we thus assume $K=\overline \Q$.
 The Poincar\'e bundle induces an isomorphism $\h^1(X)\simeq \h^1(\mathrm{Pic}^0_X)$ of Andr\'e motives. It follows from Theorem~\ref{T:dRB1A} that any subobject of $\HH^1_\dRB(X,\Q)$ is isomorphic to the de\,Rham--Betti realization of $\h^1(A)$ for some abelian variety $A/\overline \Q$ and hence is even-dimensional.
 We have the identification
 $$\Hom_{\dRB}(\mathds 1, \HH_\dRB^1(X,\Q(k))) = \Hom_{\dRB}(\mathds 1(-k), \HH_\dRB^1(X,\Q)),$$
 which then shows that  any de\,Rham--Betti class on $\HH_\dRB^1(X,\Q(k))$ is zero.
\end{proof}

\begin{rmk}
	That $\HH_\dRB^1(X,\Q(k))$ does not support any non-zero de\,Rham--Betti class  appears for the cases $k=0$ and $k=1$ in \cite[Thms.~4.1 \& 4.2]{BostCharles}.
Note that  the proof of \cite[Thm.~4.2]{BostCharles} (the case $k=1$)  relies on \cite[Thm.~3.3]{BostCharles}. It appears that the statement of \cite[Thm.~3.3]{BostCharles} contains a typo as can be seen by considering the case $G=\mathds G_{m,\overline \Q}$ therein\,; the condition $\exp_{G_\C}(\C v) \cap G(\overline \Q) \neq \varnothing$ should be replaced with $\exp_{G_\C}(v) \in G(\overline \Q)$.
Note also that the functor $\omega$ of \cite[\S 7.5.3]{AndreBook} is not full nor is a subobject in the image of $\omega$ the image of a subobject, as can be seen with $0 = \Hom(\mathds G_a, \mathds G_m)$ but $\Hom(\omega(\mathds{G}_a), \omega(\mathds{G}_m)) = \overline \Q$.
\end{rmk}

\subsection{A version of W\"ustholz's analytic subgroup theorem with coefficients}
\label{SS:WuestholzQbar}
We write $\mathcal{AB} \otimes L\subset \mathsf{M}^{\And}_{\overline \Q} \otimes L$ for the full abelian subcategory generated by the motives $\h^1(A)\otimes L$, where $A$ is an abelian variety over $\overline \Q$.

\begin{prop}[Analytic subgroup theorem for abelian motives with $L$--coefficients] \label{Qbarsubgroupmotives}
	Let $M \in \mathcal{AB}\otimes L$ and $\gamma \in \HH_{\B}(M)^{\vee}$.
	There exists a decomposition $$ M = M' \oplus M''$$ in the category $\mathcal{AB} \otimes L$ such that $\gamma \in \HH_{\B}(M'')^{\vee}$ and $\mathrm{Ann}(\gamma) = \HH_{\dR}(M')$.
\end{prop}
\begin{proof}
	We first handle the case where $M = \h^1(A) \otimes L$ is the motive of an abelian variety.
	Write $$\gamma = \sum_{i=1}^n \lambda_i \gamma_i \in \HH_{\B}^1(A, \Q)^\vee \otimes L, \quad \text{with } \lambda_i \in L \ \text{and}\ \gamma_i \in \HH_{\B}^1(A, \Q)^\vee.$$
	Inspired by the proof of \cite[Thm.~9.10]{HW}, we apply Proposition \ref{subgroupmotives} to $\underline{\gamma}:= (\gamma_1, ..., \gamma_n) \in \HH_{\B}^1(A, \Q)^{\oplus n}$.
	This gives a decomposition $ \h^1(A)^{\oplus n} = N' \oplus N''$ in $\mathcal{AB}$ such that $\underline{\gamma} \in \HH_{\B}(N'')^{\vee}$ and $\mathrm{Ann}(\underline{\gamma}) = \HH_{\dR}(N')$.
	The motive $N'$ is the kernel of the second projection $\mathrm{pr}_2: \h^1(A)^{\oplus n} \to N''$.
	Via the inclusion $L \subset \mathrm{End}(A) \otimes L$, we can view $\lambda_i \in L $ as an endomorphism of $\h^1(A) \otimes L$.
	We can thus define the morphism 
	$$\xymatrix{ f\colon \h^1(A) \otimes L  \ar[rr]^{(\lambda_1, ..., \lambda_n)} && \h^1(A)^{\oplus n} \otimes L \ar[r]^{\quad \mathrm{pr}_2} & N'' \otimes L}$$
	in $\mathcal{AB} \otimes L$.
	We let $M':= \ker f \subset M = \h^1(A) \otimes L $ and write $M = M' \oplus M''$ for some motive $M'' \in \mathcal{AB}\otimes L$.
	We have to show that $\gamma \in \HH_{\B}(M'')^{\vee}$ and $\mathrm{Ann}(\gamma) = \HH_{\dR}(M')$.
	For the former, note that by construction the dual morphism
	$$ (\lambda_1, ..., \lambda_n)^{\vee}\colon (\HH_{\B}^1(A, \Q)^{\vee})^{\oplus n} \otimes L \longrightarrow \HH_{\B}^1(A, \Q)^{\vee} \otimes L $$
	maps $\underline{\gamma}$ to $\gamma$ and the subspace $\HH_{\B}(N'')^{\vee}\otimes L$ to $\HH_{\B}(M'')^{\vee}$.
	For the latter, one computes that $\mathrm{Ann}(\gamma) \subset \HH^1_{\dR}(A) \otimes L$ is precisely the preimage of $\mathrm{Ann}(\underline{\gamma})$ under 
	$$(\lambda_1, ..., \lambda_n)\colon \HH^1_{\dR}(A) \otimes L \longrightarrow \HH^1_{\dR}(A)^{\oplus n} \otimes L.$$
	By construction of $N'$, we have $\mathrm{Ann}(\underline{\gamma}) = \HH_{\dR}(N') = \ker \mathrm{pr}_{2, \dR}$.
	This proves that $ \mathrm{Ann}(\gamma) = \ker f_{\dR} = \HH_{\dR}(M')$ and concludes the proof for $M = \h^1(A) \otimes L$.
	
	In general, $M \in \mathcal{AB} \otimes L $ will be a direct summand of $\h^1(A) \otimes L$ for some abelian variety~$A$.
	Let $\gamma \in \HH_{\B}(M)^{\vee}$. Via the projection, this gives an element $\tilde{\gamma} \in \HH_{\B}(A)^{\vee}\otimes L$. Applying the proposition to the motive $\h^1(A) \otimes L$, we get a decomposition $\h^1(A) \otimes L= N' \oplus N''$ such that $\tilde{\gamma} \in \HH_{\B}(N'')^{\vee}$ and $\mathrm{Ann}(\tilde{\gamma}) = \HH_{\dR}(N')$.
	Setting $M' := M \cap N'$ and $M''= M / M'$, we get $\gamma \in \HH_{\B}(M'')^{\vee}$ and $$\mathrm{Ann}(\gamma) = \mathrm{Ann}(\tilde{\gamma}) \cap \HH_{\dR}(M) = \HH_{\dR}(M'),$$
which concludes the proof of the proposition.
\end{proof}

\begin{rmk}[A version of the analytic subgroup theorem for $1$-motives with $L$-coefficients]
Using the arguments in the proof of Proposition~\ref{Qbarsubgroupmotives}, one can more generally establish a version with $L$-coefficients of the analytic subgroup theorem for 1-motives~\cite[Thm.~9.7]{HW}.
\end{rmk}

\begin{thm}\label{T:dRB1A-barQ}
	The functor $\mathcal{AB}\otimes L \to \mathscr C_{\dRB, \overline \Q, L}$ is fully faithful and its image is closed under taking subobjects. 
	In particular, the essential image forms a full abelian subcategory of $\mathscr C_{\dRB, \overline \Q, L}$ which is semi-simple.
\end{thm}
\begin{proof}
	The proof is the same as in Theorem \ref{T:dRB1A}, with essential input the version of the analytic subgroup theorem for abelian motives with $L$-coefficients (Proposition \ref{Qbarsubgroupmotives}).
\end{proof}

\begin{cor}\label{cor:LdRB-abelian}
	Let $K, L \subseteq \overline \Q$ be any fields.
	Let $A, A'$ be  abelian varieties over $K$.
	Then\,:
	\begin{enumerate}[(i)]
\item  any $L$-de\,Rham--Betti class on $\h^1(A) \otimes_{\Q} \h^1(A')^\vee$ is $L$-algebraic.
\item $\HH^1_\dRB(A,L(k))$ is a semi-simple object in $\mathscr C_{\dRB,K,L}$ for all $k\in \Z$.
	\end{enumerate}
\end{cor}
\begin{proof}
By  Lemma~\ref{L:descent}, in order to prove $(i)$, it suffices to show that every $L$-de\,Rham--Betti class on $\h^1(A_{\overline \Q}) \otimes_{\Q} \h^1(A'_{\overline \Q})^\vee$ is $L$-motivated.
The latter follows from Theorem~\ref{T:dRB1A-barQ}.
Regarding $(ii)$, let $N\subseteq \HH^1_\dRB(A,L(k))$ be a subobject. By Theorem~\ref{T:dRB1A-barQ}, there exists an $L$-motivated class on $\h^1(A_{\overline \Q}) \otimes_{\Q} \h^1(A_{\overline \Q})^\vee$ whose action on $\HH^1_\dRB(A_{\overline \Q},L(k))$ is a projector with image $N_{\overline \Q}$. 
The Galois group $\Gal(K)$ acts on $p$ via a finite quotient $G$, and 
it is easy to check that the motivated class over $K$ with $L$-coefficients 
$\frac{1}{|G|} \sum_{g\in G} g\circ p \circ g^{-1}$ acts on 
$\HH^1_\dRB(A,L(k))$ as a projector with image~$N$.
\end{proof}

\subsection{The de\,Rham--Betti group of an abelian variety.}

Let $A$ be an abelian variety over $\overline \Q$.
Its Mumford--Tate group is denoted by $\operatorname{MT}(A)$.

\begin{thm}\label{T:abelian-dRB}
Let $A/\overline\Q$ be an abelian variety of positive dimension. The de\,Rham--Betti group $G_\dRB(\h(A)\otimes L)$ has the following properties.
\begin{enumerate}[(i)]
	\item $G_{\dRB}(\h(A)\otimes L) $ is a connected reductive subgroup of $\mathrm{MT}(A)_L$.
	\item $\End_{G_{\dRB}(\h(A)\otimes L)}(\HH^1_{\B}(A,L)) = \End(A)_{\Q} \otimes_{\Q} L$.
	\item $\det: G_{\dRB}(\h(A)\otimes L) \rightarrow \mathds{G}_m$ is surjective.
	\item $A$ has complex multiplication if and only if $G_\dRB(\h(A)\otimes L)$ is a torus. 
\end{enumerate}
\end{thm}
\begin{proof}
That $G_{\dRB}(A)$ lies in  $\mathrm{MT}(A)_L$ follows from the inclusion  $G_{\dRB}(\h(A)\otimes L) \subseteq (G_{\dRB}(A))_L$  and from \cite[Thm.~0.6.2]{AndreIHES} giving that
the inclusion $\mathrm{MT}(A) \subseteq G_\And(A)$ is an equality. 
That $G_{\dRB}(A)$ is connected is Theorem~\ref{T:connectedness}.
As a consequence of 
Theorem~\ref{T:dRB1A-barQ}, the de\,Rham--Betti cohomology group $\HH^1_{\dRB}(A,\Q)$ is semi-simple as an object in~$\mathscr C_{\dRB}$.
It follows that,
since we are working in characteristic zero, $G_\dRB(A)$ is reductive.
Statement~$(ii)$ is Theorem~\ref{T:dRB1A-barQ}. 
Regarding $(iii)$,  the image of $\det$ is connected. 
Assume it is trivial. Then $G_\dRB(\h(A)\otimes L)$ acts
trivially on $\det(\HH^1_\B(A, L)) = \HH^{2\dim A}_\B(A, L)$. But $\h^{2\dim A}(A) \simeq \mathds{1}(-\dim A)$. 
This is a contradiction since $2\pi i$ is transcendental.
For $(iv)$, suppose $A$ has complex multiplication. 
Since $G_{\dRB}(\h(A)\otimes L)$ is a reductive and connected subgroup of the torus $\mathrm{MT}(A)_L$, it has to be a torus.
Conversely, assume that $G_{\dRB}(\h(A)\otimes L)$ is a torus. 
We recall the following classical argument (usually used for the Mumford--Tate group)\,:
$G_{\dRB}(\h(A)\otimes L)$ is contained in a maximal torus $T \subseteq \operatorname{GL}(\HH^1_\B(A, L))$. 
Then $$\End_{T}(\HH^1_{\B}(A,L)) \subseteq \End_{G_{\dRB}(\h(A)\otimes L)}(\HH^1_{\B}(A,L)) = \End(A)_{L}.$$
But $\End_{T}(\HH^1_{\B}(A,L)) $ is a commutative $L$-algebra of dimension $2g$, as can be seen after extending scalars to an algebraically closed field. It follows
 that $A$ has complex multiplication.
\end{proof}

\subsection{The de\,Rham--Betti conjecture for products of elliptic curves}
We prove the following stronger (but equivalent, by the motivic analogue of Proposition~\ref{P:torsor-fullyfaithful}) version of Theorem~\ref{T:main-elliptic} in the case of products of elliptic curves\,; see also Remark~\ref{rmk:elliptic} below.

\begin{thm}\label{T:dRB-elliptic}
	Let $E_1,\cdots, E_s$ be pairwise non-isogenous elliptic curves over $\overline \Q$ and let $A$ be an abelian variety over~$K$ such that $A_{\overline \Q}$ is isogenous to  $E_1^{n_1} \times \cdots \times E_s^{n_s}$. 
	\begin{enumerate}[(i)]
\item The de\,Rham--Betti Conjecture~\ref{C:dRB}$(ii)$ holds for $\h(A)$, i.e.,
$$\Omega^\dRB_{A} = \Omega^\mot_{A}.$$
In particular, by Proposition~\ref{P:torsor-fullyfaithful}, for any $n\geq 0$ and any $k\in \Z$, any de\,Rham--Betti class on $\h(A^n)(k)$ is algebraic.
\item If $L$ contains at most one of the CM fields associated with the CM elliptic curves among the elliptic curves $E_i$,
then  the de\,Rham--Betti Conjecture~\ref{C:dRB}$(ii)$ holds for $\h(A_{\overline \Q})\otimes L$, i.e.,
$$\Omega_{\h(A_{\overline \Q})\otimes L} = \Omega^\mot_{\h(A_{\overline \Q})\otimes L}.$$
In particular, by Proposition~\ref{P:torsor-fullyfaithful} and Lemma~\ref{L:descent}, for any $n\geq 0$ and any $k\in \Z$, any $L$-de\,Rham--Betti class on $\h(A^n)(k)$ is an $L$-linear combination of algebraic classes.
	\end{enumerate}
	
\end{thm}

We start by recalling that the Hodge conjecture is known for products of complex elliptic curves. Recall that the \emph{Hodge group} $\mathrm{Hdg}(H)$ of a rational Hodge structure $H$ is the smallest algebraic $\Q$-subgroup of 
  $\mathrm{GL}(H)$ that contains the image of $\mathrm{U}_{\C/\R} \hookrightarrow \mathrm{Res}_{\C/\R} \mathds{G}_m \to \mathrm{GL}(H)_{\R}$, where the right arrow is the morphism defining the Hodge structure on $H$.  The group $\mathrm{Hdg}(H)$ can be characterized as being the largest subgroup of   $\mathrm{GL}(H)$ that fixes all Hodge classes in tensor spaces associated with $H$. 
  In case $H$ is of pure weight $0$, then the Hodge group of $H$ agrees with its Mumford--Tate group, while in case $H$ is of pure weight $n\neq 0$, its Mumford--Tate group $\mathrm{MT}(H)$ is the image of the multiplication map
  $\mathds{G}_m \times \operatorname{Hdg}(H) \to \mathrm{GL}(H)$.
 If $A$ is a complex abelian variety, the Hodge group $\operatorname{Hdg}(A)$ (resp.\ the Mumford--Tate group $\operatorname{MT}(A)$) of $A$ is the Hodge group (resp.\ the Mumford--Tate group) of the Hodge structure $\HH_\B^1(A,\Q)$.
 
 For a CM field $F$, denote by $\bar \cdot$ the complex conjugation and by $\mathrm{U}_{F}$ the unitary torus given on $R$-points by
 $$\mathrm{U}_{F}(R) = \{g \in (F \otimes_{\Q} R)^{\times} \ \vert \ g \bar{g} = 1 \}.$$

 \begin{prop}\label{P:hodge-elliptic}
 	Let $E_1,\cdots, E_s$ be pairwise non-isogenous complex elliptic curves and let $A$ be an abelian variety isogenous to $E_1^{n_1} \times \cdots \times E_s^{n_s}$. Denote $V_i := \HH^1(E_i,\Q)$ and $K_i := \End(E_i)\otimes \Q$. Then the Hodge group of $A$ is
 	$$\operatorname{Hdg}(A) =\operatorname{Hdg(E_1) \times \cdots \times \operatorname{Hdg}(E_s)},\quad
 	\mbox{where} \ \mathrm{Hdg}(E_i) = 	\left \{
 	\begin{array}{ll}
 	\mathrm{U}_{K_i} & \mbox{if $E_i$ has CM}\,; \\
 	\mathrm{SL}(V_i) & 	 \mbox{if $E_i$ is without CM,}
 	\end{array} \right.$$ 
 	In particular, the subspace of Hodge classes on $A$ is generated by Hodge classes in $\HH^2(A,\Q)$ and, consequently, every Hodge class on $A$ is algebraic.
 \end{prop}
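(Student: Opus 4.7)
The plan is in three steps: first compute the Hodge group of each individual elliptic curve; then, using that the $E_i$ are pairwise non-isogenous, establish the product decomposition by Goursat's lemma; and finally deduce the assertion about Hodge classes from classical invariant theory together with the Lefschetz $(1,1)$ theorem.

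For Step~1, the polarization on $V_i$ places $\operatorname{Hdg}(E_i)$ inside $\mathrm{Sp}(V_i) = \mathrm{SL}(V_i)$. In the non-CM case, the Hodge cocharacter $\mathrm{U}_{\C/\R} \to \mathrm{SL}(V_i)_{\R}$ hits an anisotropic maximal torus not defined over $\Q$, whose $\Q$-Zariski closure must then be all of $\mathrm{SL}(V_i)$. In the CM case, $V_i$ is one-dimensional over $K_i$, the Hodge cocharacter factors through the norm-one subtorus $\mathrm{U}_{K_i} \subset K_i^{\times} = \operatorname{GL}_{K_i}(V_i)$, and its image is Zariski-dense there.

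For Step~2, pairwise non-isogeny yields $\End(A)_{\Q} = \prod_i M_{n_i}(K_i)$. Since $\operatorname{Hdg}(A)$ commutes with endomorphisms, it preserves each isotypic block $V_i^{n_i}$, acts through the diagonal copy of $\operatorname{GL}_{K_i}(V_i)$, and projects onto $\operatorname{Hdg}(E_i)$ in each factor. Hence $\operatorname{Hdg}(A) \subseteq \prod_i \operatorname{Hdg}(E_i)$ is a connected reductive subgroup surjecting onto every factor. By induction on $s$ and Goursat's lemma for connected reductive groups, equality reduces to ruling out any nontrivial common quotient between $\operatorname{Hdg}(E_j)$ and $\operatorname{Hdg}(E_k)$ for $j \neq k$. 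Three cases arise: (a) pairing an $\mathrm{SL}_2$ with a one-dimensional torus is impossible by the semisimple-versus-torus dichotomy; (b) two CM factors $\mathrm{U}_{K_j}, \mathrm{U}_{K_k}$ are non-isogenous $\Q$-tori, since non-isogenous CM elliptic curves over $\overline \Q$ have distinct imaginary quadratic CM fields; (c) two non-CM factors, where a nontrivial gluing must be the graph of an inner isomorphism $\mathrm{SL}(V_j) \xrightarrow{\sim} \mathrm{SL}(V_k)$ carrying the Hodge circle of $V_j$ to that of $V_k$, and thus producing a Hodge isomorphism $V_j \xrightarrow{\sim} V_k$; by Lefschetz $(1,1)$ applied to the associated class in $\HH^2(E_j \times E_k, \Q(1))$, this would give an isogeny $E_j \sim E_k$, contradicting the hypothesis. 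The main obstacle is case (c): converting the abstract group-theoretic matching into a Hodge-theoretic isomorphism so that Lefschetz $(1,1)$ can be invoked.

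For Step~3, once $\operatorname{Hdg}(A) = \prod_i \operatorname{Hdg}(E_i)$ is established, the Hodge classes on all powers of $A$ coincide with the invariants of $\prod_i \operatorname{Hdg}(E_i)$ acting on the tensor algebra of $\bigoplus_i V_i^{n_i}$. Classical $\mathrm{SL}_2$ invariant theory shows that $\mathrm{SL}(V_i)$-invariants in $V_i^{\otimes \bullet}$ are generated by the symplectic form $\omega_i \in \wedge^2 V_i$, while for CM factors the $\mathrm{U}_{K_i}$-invariants are generated by cup-product pairings sitting in degree two. Tensoring such generators across the factors and summing over multiplicities, every Hodge class on a power of $A$ is polynomial in degree-two Hodge classes; by Lefschetz $(1,1)$ these are divisor classes and hence algebraic. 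In particular every Hodge class on $A$ itself is algebraic, completing the proof.
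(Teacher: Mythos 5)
The paper does not actually prove this proposition: it simply writes ``This is classical and we refer to \cite[\S3]{Hodge-survey} for a proof.'' Your argument is a correct, self-contained proof, and it is essentially the standard one. Step~1 is right (for non-CM, a connected reductive $\Q$-subgroup of $\mathrm{SL}(V_i)$ with trivial commutant must be all of $\mathrm{SL}(V_i)$; for CM, the Hodge cocharacter lands in the norm-one torus). Step~2 is correct; the one delicate point, as you flag, is case~(c), and your resolution is right: if $\mathrm{Hdg}(A)$ projects to a graph in $\mathrm{SL}(V_j)\times\mathrm{SL}(V_k)$, the unique (up to $\Q^\times$) $\mathrm{SL}_2$-intertwiner $T\colon V_j\to V_k$ exists over $\Q$ by Schur, and the graph condition forces it to commute with the Hodge cocharacters, so $T$ is a rational Hodge class in $\Hom(V_j,V_k)\subset\HH^2(E_j\times E_k,\Q(1))$ and Lefschetz $(1,1)$ produces a nonzero isogeny $E_j\to E_k$. (One should also note that a gluing at the $\mathrm{PGL}_2$ level lifts uniquely to an $\mathrm{SL}_2$-isomorphism and by connectedness $\mathrm{Hdg}(A)$ lies in the graph of that lift, so the $\mathrm{PGL}$ case reduces to the $\mathrm{SL}$ case.) Step~3 is the first fundamental theorem of invariant theory for $\mathrm{Sp}_2=\mathrm{SL}_2$ and for the CM tori, and again is correct. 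For comparison, you may find it useful that in the de\,Rham--Betti analogue (Step~3 of the proof of Theorem~\ref{T:dRB-elliptic} in the paper), the case of two non-CM factors is handled by counting $G$-equivariant endomorphisms of $V_j\oplus V_k$ against $\dim_\Q\End(E_j\times E_k)_\Q=2$, rather than by producing an explicit Hodge class; the two routes are equivalent here because $\End_{\mathrm{Hdg}(A)}(\HH^1)=\End(A)_\Q$ is itself a consequence of Lefschetz $(1,1)$, but the endomorphism-count phrasing transfers more directly to the motivic and period settings where Lefschetz $(1,1)$ is not available.
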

 \begin{proof}
 	This is classical and we refer to \cite[Part III]{Hodge-survey} for a proof.
 \end{proof}

\begin{proof}[Proof of Theorem \ref{T:dRB-elliptic}$(i)$] 
The proof goes similarly as for the Mumford--Tate group,
with the notable difference, laid out in Step~3 below, that there is no theory of weights available, meaning that it is a priori not clear if $G_{\dRB}(A) $ contains the scalar matrices.

By Lemma~\ref{L:torsor-descent}, we may and do assume that $K=\overline \Q$.
By Proposition~\ref{P:hodge-elliptic}, we only need to show that $G_{\dRB}(A) = G_{\And}(A)$. 
From Andr\'e~\cite[Thm.~0.6.2]{AndreIHES}, we have $G_{\And}(A) = \mathrm{MT}(A)$. Hence we have to show that the inclusion $G_{\dRB}(A)\subseteq\mathrm{MT}(A)$ is an equality, which is equivalent to showing that the inclusion $G^1_{\dRB}(A)\subseteq \mathrm{Hdg}(A)$ is an equality, where $G^1_{\dRB}(A)$ denotes the connected component of the kernel of $\det : G_\dRB(A)\rightarrow \mathds{G}_m$. 
\smallskip

\textit{Step 1}\,: $A=E$ is an elliptic curve. If $E$ has complex multiplication, then $\mathrm{MT}(E) = \mathrm{Res}_{K/\Q}\mathds{G}_{m,K}$, where $K=\End(E)_{\Q}$. In this case we know from Theorem~\ref{T:abelian-dRB} that $G_{\dRB}(E)\subseteq \mathrm{MT}(E)$ is a subtorus. 
If $G_{\dRB}(E)$ is a strict subtorus, then either $G_{\dRB}(E)\subseteq \mathrm{U}_{K}$ or $G_{\dRB}(E)\subseteq \mathds{G}_m$. Note that the case $G_{\dRB}(E)\subseteq \mathrm{U}_{K}$ violates the surjectivity of the determinant and the case $G_{\dRB}(E)\subseteq \mathds{G}_m$ violates the fact that $\End_{G_{\dRB}(E)}(\HH^1_{\B}(E,\Q)) = K$. 
Hence we see that $G_{\dRB}(E) = \mathrm{MT}(E)$. 
If $E$ does not have  CM, then $\mathrm{MT}(E)=\mathrm{GL}(V)$, where $V=\HH^1_\B(E,\Q)$. In this case, we have $\mathrm{Hdg}(E) = \mathrm{SL}(V)$ and $G^1_{\dRB}(E)\subseteq \mathrm{SL}(V)$ is a connected reductive subgroup. 
It follows that up to conjugation $G^1_{\dRB}(E)\subseteq \mathrm{SL}(V)$ is either contained in a maximal torus or equal to $\mathrm{SL}(V)$.
In the first case, there is a space of invariants of dimension at least 2 in $\End(V)$ which violates the assumption that $\End(E)_\Q=\Q$. 
Hence $G^1_{\dRB}(E) = \mathrm{SL}(V)$ and thus $G_{\dRB}(E) = \mathrm{GL}(V) = \mathrm{MT}(E)$.
\smallskip

\textit{Step 2}\,: $A=E_1^{n_1}\times E_2^{n_2}\times \cdots \times E_r^{n_r}$, for pairwise non-isogenous CM elliptic curves $E_i$. The fact that $\det(\HH^1_\dRB(E_i,\Q))\simeq \Q(-1)$ for all $i$ implies that
\begin{equation}\label{eq:G1-product}
G^1_\dRB(A) \subseteq G^1_{\dRB}(E_1)\times G^1_{\dRB}(E_2)\times \cdots \times G^1_{\dRB}(E_r)
\end{equation}
and that $G^1_\dRB(A)$ surjects onto each factor $G^1_\dRB(E_i)=\mathrm{U}_{K_i}$ with $K_i=\End(E_i)_{\Q}$. By Goursat's lemma \cite[Prop.~B.71.1]{Hodge-survey} and the fact that the $\mathrm{U}_{K_i}$ are pairwise non-isogenous,
 we conclude that the above inclusion is an identity. It follows from Proposition \ref{P:hodge-elliptic} that $G^1_\dRB(A)=\mathrm{Hdg}(A)$.
 
\smallskip
\textit{Step 3}\,: $A=E_1^{n_1}\times E_2^{n_2}$, where $E_1$ and $E_2$ are non-isogenous elliptic curves without CM. 
Due to the lack of a theory of weights in the de\,Rham--Betti setting, we are unable to use the usual argument for the Mumford--Tate group as in \cite[\S3]{Hodge-survey}. 
We may assume that $n_1=n_2=1$. Let $V_i:=\HH_\B^1(E_i,\Q)$, $i=1,2$. 
Then we know that $\mathrm{MT}(E_i) = \mathrm{GL}(V_i)$, $i=1,2$. Consider the inclusion $G_\dRB(A)\subseteq \mathrm{MT}(A)$ at the level of Lie algebras. We get
\[
	\mathfrak{g}=\mathfrak{g}^{ss}\oplus \mathfrak{z}(\mathfrak{g}) \hookrightarrow \mathfrak{mt} = \mathfrak{sl}_2 \oplus \mathfrak{sl}_2 \oplus \mathfrak{c},
\]
where $\mathfrak{g}:= \mathrm{Lie}\; G_\dRB(A)$ is the Lie algebra of $G_\dRB(A)$, $\mathfrak{z}(\mathfrak{g})$ corresponds to the center of $G_\dRB(A)$ and $\mathfrak{c}$ is a one-dimensional commutative Lie algebra. 
The fact that $G_\dRB(A)$ surjects onto $G_\dRB(E_i)$ implies that $\mathfrak{g}^{ss}\hookrightarrow \mathfrak{sl}_2 \oplus \mathfrak{sl}_2$ surjects onto both factors. 
This implies $\mathfrak{z}(\mathfrak{g}) \hookrightarrow \mathfrak{c}$. 
Furthermore, by Goursat's lemma~\cite[Prop.~B.71.2]{Hodge-survey}, we have either $\mathfrak{g}^{ss}\simeq\mathfrak{sl}_2$, embedded in $\mathfrak{sl}_2 \oplus \mathfrak{sl}_2$ as the graph of an isomorphism, or $\mathfrak{g}^{ss} = \mathfrak{sl}_2 \oplus \mathfrak{sl}_2$. 
If $\mathfrak{g}^{ss}\simeq \mathfrak{sl}_2$, then $\HH^1_\B(A,\Q) = V_1\oplus V_2$ becomes the direct sum of two copies of the standard representation of $\mathfrak{sl}_2$. 
As a consequence, the space of $G_\dRB(A)$-invariants of $\End(\HH^1_\B(A,\Q))$ 
is of dimension~4, 
which contradicts the assumption that $E_1$ and $E_2$ are not isogenous. 
Hence $\mathfrak{g}^{ss} = \mathfrak{sl}_2 \oplus \mathfrak{sl}_2$. 
The fact that $G_\dRB(A)$ surjects onto $\mathds{G}_m$ implies that $\mathfrak{z}(\mathfrak{g}) = \mathfrak{c}$. 
It follows that $G_\dRB(A) = \mathrm{MT}(A)$.

\smallskip

\textit{Step 4}\,: $A=E_1^{n_1}\times E_2^{n_2}\times \cdots \times E_r^{n_r}$ for pairwise non-isogenous non-CM elliptic curves $E_i$. In this case, by Step 3,  the inclusion \eqref{eq:G1-product} surjects onto the product of each pair of factors. Then one can apply a form of Goursat's lemma \cite[Prop.~B.71.3]{Hodge-survey}.
\smallskip

\textit{Step 5}\,: $A=B\times C$ where $B$ is a product of non-CM elliptic curves and $C$ is a product of CM elliptic curves. In this case, we apply Goursat's lemma to $G^1_{\dRB}(A)\subset G^1_\dRB(B)\times G^1_\dRB(C)$ and note that $G^1_\dRB(B)$ is semi-simple and $G^1_\dRB(C)$ is a torus.
\end{proof}

\begin{proof}[Proof of Theorem~\ref{T:dRB-elliptic}$(ii)$] 
	By Lemma~\ref{L:torsor-descent}, we may and do assume that $K=\overline \Q$.
	By Proposition~\ref{P:hodge-elliptic}, we only need to show that $G_\dRB(\h(A)\otimes  L) = G_{\And}(A)_{L}$. 
	This holds for $A$ a non-CM elliptic curve by the exact same argument as in the proof of Theorem~\ref{T:dRB-elliptic}$(i)$. For $A$ a CM elliptic curve, this holds 
	thanks to Chudnovsky's theorem~\cite{Chudnovsky}\,;
        more precisely, denoting $F = \mathrm{End}(A)_\Q$, we have $G_\dRB(\h(A)\otimes  L) = \mathds{G}_{m,L}^2$ if $L\supseteq F$ and $G_\dRB(\h(A)\otimes  L) = (\mathrm{Res}_{F/\Q}(\mathds{G}_m))_L$ otherwise.
        In particular, Step 2 of the proof of Theorem~\ref{T:dRB-elliptic}$(i)$ carries over under the condition that $L$ contains at most one of the CM fields $K_i = \mathrm{End}(E_i)_\Q$.
	Finally it is easy to see that
	Steps 3, 4 and 5 of the proof of Theorem~\ref{T:dRB-elliptic}$(i)$ carry over to the setting of $L$-de\,Rham--Betti classes.
\end{proof}

\begin{rmk}\label{rmk:elliptic}
The de\,Rham--Betti conjecture remains open for $\h(A) \otimes \overline \Q$ if contains two non-isogenous CM elliptic curves as factors. 
In that situation, the arguments in Step 2 of the proof of Theorem~\ref{T:dRB-elliptic}$(i)$ do not carry over as tori over $\overline \Q$ are merely classified by their rank. Moreover, we would like to point out that we are not aware of a proof of the de\,Rham--Betti Conjecture~\ref{C:dRB}$(ii)$ for $\h(E)\otimes \overline \Q$ with $E$ a CM elliptic curve that does not employ Chudnovsky's theorem~\cite{Chudnovsky}.
\end{rmk}

\subsection{The case of abelian surfaces}
We prove the following equivalent (by the motivic analogue of Proposition~\ref{P:torsor-fullyfaithful}) version of Theorem~\ref{T:main-elliptic} in the case of powers of abelian surfaces with non-trivial endomorphism ring.
For a smooth projective variety $X$ over $K$, we write $\Omega^?_X$ for $\Omega^?_{\h(X)}$ and $G_?(X)$ for $G_?(\h(X))$, where $?$ is any of $\dRB$, $\And$ or $\mot$, and where $\h(X) \in \mathsf{M}^\And_K$ is the Andr\'e motive of $X$. If $K=\overline \Q$, we write $\Omega_X$ for $\Omega_{\h(X)\otimes \overline \Q} = \Omega^\dRB_{\h(X)\otimes \overline \Q}$ (Proposition~\ref{P:barQ-torsor}).

\begin{thm}\label{T:abelian-surfaces}
	Let $A$ be an abelian surface over $K$ such that $\operatorname{End}(A_{\overline \Q})\otimes \Q \neq \Q$. 
	\begin{enumerate}[(i)]
\item The de\,Rham--Betti conjecture (Conjecture~\ref{C:dRB}(ii)) holds for $\h(A)$, i.e.,
$$\Omega^\dRB_{A} = \Omega^\mot_{A}.$$
In particular, by Proposition~\ref{P:torsor-fullyfaithful}, for any $n\geq 0$ and any $k\in \Z$, any de\,Rham--Betti class on $\h(A^n)(k)$ is algebraic.
\item If in addition $A$ is simple without CM, then the de\,Rham--Betti conjecture (Conjecture~\ref{C:dRB}(ii)) holds for $\h(A)\otimes \overline \Q$, i.e.,
$$\Omega_{A} = \Omega^\mot_{A}.$$
In particular, by Proposition~\ref{P:torsor-fullyfaithful}, for any $n\geq 0$ and any $k\in \Z$, any $\overline \Q$-de\,Rham--Betti class on $\h(A_{\overline \Q}^n)(k)$ is a $\overline \Q$-linear combination of algebraic classes.
	\end{enumerate}
\end{thm}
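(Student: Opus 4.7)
The plan is to reduce to $K=\overline\Q$ using Lemmas~\ref{L:torsor-descent} and~\ref{L:Qbar-torsor-descent}, so that by Andr\'e's Theorem~\ref{T:Andre-abelian} the statements become $G_\dRB(A)=\mathrm{MT}(A)$ for part~(i), respectively $G_{\overline\Q-\dRB}(A\otimes\overline\Q)=\mathrm{MT}(A)_{\overline\Q}$ for part~(ii). By Albert's classification of abelian surfaces over algebraically closed fields, the hypothesis $\End(A_{\overline\Q})_\Q\neq\Q$ puts $A_{\overline\Q}$ into one of three families: (a)~isogenous to a product of elliptic curves, (b)~simple with real multiplication by a real quadratic field $F$, or (c)~simple with complex multiplication by a CM quartic field $K$. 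Case~(a) is handled directly by Theorem~\ref{T:dRB-elliptic}; for part~(ii) the assumption that $A$ is simple without CM forces the elliptic factors to all be non-CM, so Theorem~\ref{T:dRB-elliptic}(ii) applies.

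For the RM case~(b), the Mumford--Tate group equals $\mathrm{MT}(A)=\{g\in \operatorname{Res}_{F/\Q}\mathrm{GL}_{2,F}:\det g\in \mathds{G}_m\}$, with derived subgroup $\operatorname{Res}_{F/\Q}\mathrm{SL}_{2,F}$, which over $\overline\Q$ becomes $\mathrm{SL}(V_1)\times \mathrm{SL}(V_2)$ acting on the decomposition $V_{\overline\Q}=V_1\oplus V_2$ into the two $F$-eigenspaces. By Theorem~\ref{T:abelian-dRB}, $G_\dRB(A)_{\overline\Q}$ is a connected reductive subgroup of $\mathrm{MT}(A)_{\overline\Q}$ preserving the $F$-decomposition, whose centralizer in $\End(V_{\overline\Q})$ is $F\otimes\overline\Q\cong\overline\Q\times\overline\Q$, so its derived subgroup $H$ sits inside $\mathrm{SL}(V_1)\times \mathrm{SL}(V_2)$. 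Goursat's lemma then forces each projection of $H$ onto a factor to be either trivial or surjective; a trivial projection onto a single factor would enlarge the centralizer on the corresponding $V_i$ to $M_2(\overline\Q)$, and a diagonal embedding of $\mathrm{SL}_2$ would produce a $G_\dRB$-equivariant isomorphism $V_1\cong V_2$, again contradicting the centralizer equality. Hence $H=\mathrm{SL}(V_1)\times \mathrm{SL}(V_2)$, and combined with the surjectivity of $\det$ from Theorem~\ref{T:abelian-dRB}(iii), this yields $G_\dRB(A)=\mathrm{MT}(A)$. The same argument with $\overline\Q$-coefficients (using the $\overline\Q$-analog of Theorem~\ref{T:abelian-dRB}) handles part~(ii).

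The remaining case, the simple CM case for part~(i), is where the main difficulty lies. Here $\mathrm{MT}(A)$ is a subtorus of $\operatorname{Res}_{K/\Q}\mathds{G}_{m,K}$ determined by the CM type $(K,\Phi)$, and $G_\dRB(A)\subseteq\mathrm{MT}(A)$ is a connected subtorus by Theorem~\ref{T:abelian-dRB}(iv). Passing to character lattices, the plan is to show that the kernel $L$ of the surjection $X^*(\mathrm{MT}(A)_{\overline\Q})\twoheadrightarrow X^*(G_\dRB(A)_{\overline\Q})$ is trivial. The constraints from Theorem~\ref{T:abelian-dRB} translate into three conditions: the four characters indexed by embeddings $K\hookrightarrow\overline\Q$ remain pairwise distinct in the quotient (from $\End_{G_\dRB(A)}(V)=K$), their sum has nonzero image (from surjectivity of $\det$), and $L$ is stable under the absolute Galois group acting by permutation of embeddings. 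The biquadratic CM quartic case is excluded by simplicity, leaving the cyclic and non-Galois subcases, in which a direct analysis of the Galois-stable sublattices of $X^*(\mathrm{MT}(A)_{\overline\Q})$ (using that the Galois closure of $K$ has Galois group $\Z/4$ in the cyclic case and dihedral $D_4$ in the non-Galois case) shows that any nontrivial $L$ would either identify two of the four characters or annihilate their sum, forcing $L=0$. This Galois-module analysis, done separately in each subcase, is the technical heart of the argument.
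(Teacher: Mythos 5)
Your overall strategy (reduce to $K=\overline\Q$ via Lemmas~\ref{L:torsor-descent} and~\ref{L:Qbar-torsor-descent}, then identify the de\,Rham--Betti group with the Mumford--Tate group, going case by case through the Albert classification) matches the paper's. However, there is a genuine gap: your case division omits the indefinite quaternion case, where $\End(A_{\overline\Q})_\Q$ is a quaternion algebra over $\Q$ split over $\R$ (so-called QM or ``false elliptic'' surfaces). This is case~$(c)$ in the paper's list from \cite[(2.2)]{MoonenZarhin}, it does occur, and it is relevant both for part~(i) and, since such surfaces are simple without CM, for part~(ii). For a simple abelian surface with nontrivial $\End_\Q$ the four possibilities are: product-of-elliptic-curves (after splitting), real quadratic RM, indefinite quaternionic QM, and primitive quartic CM. Your list has three entries only. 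In this omitted case the argument is short (the paper observes $G_\dRB(A)_\C\subseteq\mathrm{GL}_{2,\C}$ is not a torus by Theorem~\ref{T:abelian-dRB}(iv) and has nontrivial determinant by~(iii), hence equals $\mathrm{GL}_{2,\C}$), but without it the proof is incomplete.

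On the cases you do treat: your RM argument is a genuine variant worth noting. The paper uses a Galois-descent argument (invariance of $\mathfrak g^{ss}_k$ under $\Gal(k/\Q)$ forces surjection onto both $\mathfrak{sl}_2$-factors) for part~(i) and then has to switch to a different, representation-theoretic argument for part~(ii) because the Galois argument is unavailable over $\overline\Q$. Your centralizer-based argument (a trivial projection onto $\mathrm{SL}(V_i)$ would enlarge $\End_{G_\dRB(A)_{\overline\Q}}(V)$ beyond $F\otimes\overline\Q$, and so would a diagonal $\mathfrak{sl}_2$) sidesteps the Galois descent entirely and thus applies uniformly to both parts, which is cleaner. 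Your CM case is the same approach in substance (both analyze the Galois-stable structure of the cocharacter lattice of the Mumford--Tate torus, using the constraints forced by Theorem~\ref{T:abelian-dRB}(ii)--(iii)), but you leave the key computation at ``a direct analysis\ldots shows''; the paper's formulation via the unitary subtorus $U_F$ and the claim that $X_*(U_F)_\Q$ has no nontrivial Galois-stable subspace is sharper and explicitly invokes the hypothesis that the quartic CM field has no imaginary quadratic subfield, whereas you only gesture at the dihedral/cyclic Galois-module structure without pinning down where that hypothesis enters.
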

\begin{proof}[Proof of Theorem~\ref{T:abelian-surfaces}$(i)$] 
By Lemma~\ref{L:torsor-descent}, we may and do assume that $K=\overline \Q$.
The case where $A$ is isogenous to a product of elliptic curves is handled by Theorem \ref{T:dRB-elliptic}. We therefore assume that $A$ is a simple abelian surface over $\overline \Q$.
We recall, e.g.\ from \cite[(2.2)]{MoonenZarhin}, that there are the following four possibilities for the endomorphism ring $\End(A)_{\Q}$\,: 
\begin{enumerate}[(a)]
	\item
	$\End(A)_{\Q} = \Q$;
	\item
	$\End(A)_{\Q} $ is a real quadratic extension of $\Q$;
	\item
	$\End(A)_{\Q} $ is a quaternion algebra with center $\Q$ which splits over $\R$;
	\item
	$\End(A)_{\Q} $ is a CM field of degree $4$ over $\Q$ which does not contain an imaginary quadratic subfield.
\end{enumerate}
We prove that the inclusion $G_{\dRB}(A)\subseteq\mathrm{MT}(A)$ is an equality when $\End(A)_{\Q} \not= \Q$.
We distinguish the different cases for $\End(A)_{\Q}$.
\smallskip

\textit{Case (b)\,:}
Suppose $k := \End(A)_{\Q}$ is a real quadratic field. Then
$\mathrm{MT}(A) \subseteq \mathrm{Res}_{k / \Q} \operatorname{GL}_{2,k}$ is the subgroup with $R$-points given by
$$\mathrm{MT}(A)(R) = \{g \in \operatorname{GL}_{2}(k \otimes_{\Q} R) \, | \, \mathrm{det} \, g \in R^{\times}\}.$$
Denote by $\mathfrak{g}$ the Lie algebra of $G_{\dRB}(A)$.
Write the reductive Lie algebra $ \mathfrak{g} = \mathfrak{g}^{ss} \oplus \mathfrak{z}(\mathfrak{g})$ as the sum of a semi-simple Lie algebra and an abelian Lie algebra. 
On the other hand, the Lie algebra of $\mathrm{MT}(A)_k$ is $\mathfrak{mt}_k = \mathfrak{sl}_{2,k} \oplus \mathfrak{sl}_{2,k} \oplus \mathfrak{c}$, where $\mathfrak{c}$ is a one-dimensional abelian Lie algebra.
Then $\mathfrak{g}_k^{ss} :=\mathfrak{g}^{ss} \otimes_{\Q} k \subseteq \mathfrak{sl}_{2,k} \oplus \mathfrak{sl}_{2,k}$ and the semi-simple part is non-trivial since otherwise $G_{\dRB}(A)$ would be a torus, contradicting Theorem \ref{T:abelian-dRB}$(iv)$.
Since $\mathfrak{g}^{ss}$ is defined over $\Q$, the $k$-Lie algebra $\mathfrak{g}^{ss}_k$ is invariant under the non-trivial automorphism $\sigma \in \mathrm{Gal}(k /\Q)$ which acts on $\mathfrak{sl}_{2,k} \oplus \mathfrak{sl}_{2,k}$ by conjugating and swapping both factors. Consequently, $\mathfrak{g}^{ss}_k$ surjects onto both factors.
Thus either 
\begin{eqnarray*} \mathfrak{g}_k^{ss} = \mathfrak{sl}_{2,k} \oplus \mathfrak{sl}_{2,k} & \mathrm{or} & \mathfrak{g}_k^{ss} \simeq \mathfrak{sl}_{2,k} \subset \mathfrak{sl}_{2,k} \oplus \mathfrak{sl}_{2,k} \end{eqnarray*} 
is the graph of an automorphism of $\mathfrak{sl}_{2,k}$. We have to exclude the latter.
Using Theorem~\ref{T:abelian-dRB}$(iii)$ it follows that $\mathfrak{z}(\mathfrak{g}) = \mathfrak{c}$ and thus $\mathfrak{g}_k \simeq \mathfrak{gl}_{2,k}$, where the action on $\HH^1_{\B}(A,\Q) \otimes_{\Q} k$ is isomorphic to the direct sum of two copies of the standard representation.
But then the dimension of $\End_{\mathfrak{g}_k}(\HH^1_{\B}(A,\Q) \otimes_{\Q} k)$ is four, which is different from $\dim_{\Q} \End(A)_{\Q} = 2$.
It follows that $\mathfrak{g}_k^{ss} = \mathfrak{sl}_{2,k} \oplus \mathfrak{sl}_{2,k}$.
By using Theorem~\ref{T:abelian-dRB}$(iii)$ once more we conclude $\mathfrak{g}_k = \mathfrak{mt}_k$.
\smallskip
	
\textit{Case (c)\,:}
In this case, $G_{\dRB}(A)_{\C} \subseteq \mathrm{MT}(A)_{\C} \simeq \operatorname{GL}_{2, \C}$.
	Since $G_{\dRB}(A)$ is not a torus by Theorem~\ref{T:abelian-dRB}$(iv)$,
	the group $G_{\dRB}(A)_{\C}$ is either $\mathrm{SL}_{2, \C}$ or $\operatorname{GL}_{2,\C}$.
	But $G_{\dRB}(A)$ admits a non-trivial map to $\mathds{G}_m$ by Theorem \ref{T:abelian-dRB}$(iii)$, so we have $G_{\dRB}(A)_{\C} = \operatorname{GL}_{2, \C}$.
	\smallskip
	
	\textit{Case (d)\,:}
	Suppose $F := \End(A)_{\Q}$ is a CM field of degree $4$ over $\Q$, which does not contain an imaginary quadratic subfield.
	In this case, $T:= \mathrm{MT}(A) \subseteq \mathrm{Res}_{F/\Q}\mathbb{G}_m$ is the torus whose points in a $\Q$-algebra $R$ are given by
	$$ T(R) = \{g \in (F \otimes_{\Q} R)^{\times} \ \vert \ g \bar{g} \in R^{\times}\}.$$
	The torus $T$ is determined up to isogeny by the rational vector space of ($\overline \Q$-)cochar\-acters $X_{*}(T)_{\Q}$ equipped with its $\Gal(\overline \Q/\Q)$-action.
	We have the short exact sequence
	$$1 \to \mathrm{U}_F \to T \to \mathbb{G}_m \to 1,$$
	where $\mathrm{U}_F$ is the unitary torus given on $R$-points by
	$\mathrm{U}_F(R) = \{g \in (F \otimes_{\Q} R)^{\times} \ \vert \ g \bar{g} = 1 \}.$
	This gives rise to the short exact sequence of $\Q$-vector spaces with $\mathrm{Gal}(\overline \Q/\Q)$-action
	$$ 0 \to X_{*}(\mathrm{U}_{F})_{\Q} \to X_{*}(T)_{\Q} \to X_{*}(\mathbb{G}_m)_{\Q} = \Q \to 0,$$
	where the Galois action on the rightmost term is trivial.
	
	Let $T_{\dRB} := G_{\dRB}(A) \subseteq T$, which is a torus by Theorem \ref{T:abelian-dRB}$(iv)$.
	It is enough to prove that the inclusion $X_{*}(T_{\dRB})_{\Q} \subseteq X_{*}(T)_{\Q}$ is an equality.
	By Theorem \ref{T:abelian-dRB}$(iii)$, the group $X_{*}(T_{\dRB})_{\Q}$ surjects onto $X_{*}(\mathbb{G}_m)_{\Q}$.
	The kernel is a Galois-stable subspace of $X_{*}(\mathrm{U}_F)_{\Q}$.
	
	\begin{claim}
		There is no non-trivial Galois-stable subspace of $X_*(\mathrm{U}_{F})_{\Q}$.
	\end{claim}
	\begin{proof}[Proof of Claim]
		
		Let $\Sigma_F := \Hom(F, \C) = \{\phi_1, \bar{\phi}_1, \phi_2, \bar{\phi}_2\}$ be the set of embeddings of the CM field~$F$ into the complex numbers.
		Then $X^*(\mathrm{Res}_{F/\Q}\mathbb{G}_m)_{\Q}$ is naturally identified with the $\Q$-vector space with basis $\Sigma_F$ with its natural $\mathrm{Gal}(\overline \Q/\Q)$-action, and we denote by $\{\phi_1^{\vee}, \bar{\phi}_1^{\vee}, \phi_2^{\vee}, \bar{\phi}_2^{\vee}\}$ the associated dual basis of $X_*(\mathrm{Res}_{F/\Q}\mathbb{G}_m)_{\Q}$.
		
		With this notation, 
		\begin{equation} \label{generators} X_*(\mathrm{U}_{F})_{\Q} = \langle \phi_1^{\vee} - \bar{\phi}_1^{\vee}, \phi_2^{\vee} - \bar{\phi}_2^{\vee} \rangle_{\Q}. \end{equation}
		Assume there exists a one-dimensional Galois-stable subspace $L \subseteq X_*(\mathrm{U}_{F})_{\Q}$.
		As $\mathrm{Gal}(\overline \Q/ \Q)$ acts transitively on $\Sigma_F$, we know that
		\begin{eqnarray}\label{notL} L \not= \langle \phi_1^{\vee} - \bar{\phi}_1^{\vee} \rangle_{\Q} & \mathrm{and} & L \not= \langle \phi_2^{\vee} - \bar{\phi}_2^{\vee} \rangle_{\Q}. \end{eqnarray}
		Denote by $F^g$ the Galois closure of $F$.
		Then $\mathrm{Gal}(\overline \Q/ \Q)$ acts on $L$ through a character $$ \chi: \mathrm{Gal}(F^g/\Q) \to \Z / 2 \Z.$$
		Let $K := (F^g)^{\mathrm{ker \,} \chi}$.
		As $F$ is a CM field, the Galois group $\mathrm{Gal}(F^g / F)$ fixes one pair of complex embeddings $(\phi_i, \bar{\phi}_i)$.
		Together with (\ref{notL}), we get that $\mathrm{Gal}(F^g / F)$ acts trivially on $L$, and hence $K \subseteq F$.
		
		Since $F$ does not contain a totally imaginary quadratic subfield, $K \subset F$ is totally real.
		Consequently, complex conjugation acts trivially on the target of
		$$\Sigma_F = \Hom(F, \C) \twoheadrightarrow \Hom(K, \C).$$
		It follows that there exist $\sigma_1, \sigma_2 \in \mathrm{Gal}(F^g/K)$ such that
		$ \bar{\phi}_1 = \sigma_1 \phi_1$ and $ \bar{\phi}_2 = \sigma_2 \phi_2$.
		Applying these relations to the generators in (\ref{generators}), we see that there is no non-zero element of $X_*(\mathrm{U}_{F})_{\Q}$ on which both $\sigma_1$ and $\sigma_2$ act trivially.
	\end{proof}
	
	The claim shows that either $X_{*}(T_{\dRB})_{\Q} = X_{*}(T)_{\Q}$, in which case we are done, or $$X_{*}(T_{\dRB})_{\Q} \simeq X_{*}(\mathbb{G}_m)_{\Q}.$$
	As $\mathrm{Gal}(\overline \Q/\Q)$ acts transitively on $\Sigma_F$, there is a unique one-dimensional subspace of $X_{*}(T)_{\Q}$ on which $\mathrm{Gal}(\overline \Q/\Q)$ acts trivially.
	It corresponds to the scalar matrices $\mathbb{G}_m \subseteq T$.
	Since by Theorem~\ref{T:abelian-dRB}$(ii)$, $T_{\dRB}$ cannot just consist of scalar matrices, we conclude that $X_{*}(T_{\dRB})_{\Q} = X_{*}(T)_{\Q}$.
	This shows $T_{\dRB} = T$, as desired.
\end{proof}

\begin{proof}[Proof of Theorem~\ref{T:abelian-surfaces}$(ii)$]
By Lemma~\ref{L:torsor-descent}, we may and do assume that $K=\overline \Q$. We have to deal with cases $(b)$
 and $(c)$. 
In case $(c)$ one proves that the inclusion $G_\dRB(\h(A)\otimes \overline \Q) \subseteq \mathrm{MT}(A)_{\overline \Q}$ is an equality  as in the proof of Theorem~\ref{T:abelian-surfaces}$(i)$ by using the second part of Theorem~\ref{T:abelian-dRB} instead of the first. 
As for case $(b)$, denote by $\mathfrak g$ the Lie algebra of $G_\dRB(\h(A)\otimes \overline \Q)$\,; its semi-simple part $\mathfrak g^{ss}$ is a Lie subalgebra of $\mathfrak{mt}_{\overline \Q}^{ss} = \mathfrak{sl}_{2,\overline \Q} \oplus \mathfrak{sl}_{2,\overline \Q}$. 
First we show that $\mathfrak g^{ss}$ surjects onto both factors. Here we may no longer use the Galois argument as in the proof of Theorem~\ref{T:abelian-surfaces}$(i)$. We argue instead as follows. We have the inclusion $$G_\dRB(\h(A)\otimes \overline \Q) \subseteq \operatorname{MT}(A)_{\overline \Q} = \mathds{G}_{m,\overline \Q} \cdot \big(\operatorname{SL}_{2,\overline \Q} \times \operatorname{SL}_{2,\overline \Q} \big) \subset \operatorname{GL}_{2,\overline \Q} \times \operatorname{GL}_{2,\overline \Q}.$$
Here, the action of $G_\dRB(\h(A)\otimes \overline \Q)$ on $\HH^1_\B(A,\overline \Q)$  is via the action of $\operatorname{GL}_{2,\overline \Q} \times \operatorname{GL}_{2,\overline \Q}$ on two copies of the standard representation. 
We claim that the projections of $G_\dRB(\h(A)\otimes \overline \Q)$  on both $\operatorname{GL}_{2,\overline \Q}$-factors contains $\operatorname{SL}_{2,\overline \Q}$. 
If not, since  $G_\dRB(\h(A)\otimes \overline \Q)$ is connected and reductive, then up to switching the factors we would have an inclusion $G_\dRB(\h(A)\otimes \overline \Q) \subseteq \operatorname{GL}_{2,\overline \Q} \times \operatorname{T}$ for some maximal torus $T$ inside $\operatorname{GL}_{2,\overline \Q}$. But then the space of invariants in $\End(\HH^1_\B(A,\overline \Q))$ under $G_\dRB(\h(A)\otimes \overline \Q)$ would have dimension at least 3, which contradicts Theorem~\ref{T:abelian-dRB}$(ii')$ and $\End(A)\otimes \overline \Q = \overline \Q \oplus \overline \Q$.
We conclude that  $\mathfrak g^{ss}$ surjects onto both $\mathfrak{sl}_{2,\overline \Q} $-factors.
We can now argue as in the proof of Theorem~\ref{T:abelian-surfaces}$(i)$ by using  the second part of Theorem~\ref{T:abelian-dRB} instead of the first and conclude that $\mathfrak g = \mathfrak{mt}_{\overline \Q}$.
\end{proof}

\begin{rmk}
	It seems that, in the case of an abelian surface $A$ over $\overline \Q$ with $\End(A)_{\Q} = \Q$, neither the properties established in Theorem \ref{T:abelian-dRB} nor Proposition~\ref{P:dRB-tensor2} below applied to the Kummer surface associated with $A$ are  sufficient to determine $G_{\dRB}(A)$ uniquely.
	In this case, the Mumford--Tate group is $\mathrm{MT}(A) = \mathrm{GSp}_4$.
	We do not know how to exclude the possibility that the semi-simple part $\mathfrak{g}^{ss} \subseteq \mathfrak{sp}_4$ of the Lie algebra of $G_{\dRB}(A)$ is $\mathfrak{sl}_2$, embedded via the third symmetric power of the standard representation.
\end{rmk}

\begin{rmk} 
Zekun Ji~\cite{Ji-Zekun} has recently established the de\,Rham--Betti Conjecture~\ref{C:dRB}$(ii)$ for simple CM abelian fourfolds.
\end{rmk}

\section{De\,Rham--Betti classes on hyper-K\"ahler varieties}\label{S:cod2}

We prove Theorems~\ref{T:main-dRBisometry}, \ref{T:dRB-Torelli}, 
\ref{T:main-dRB} and~\ref{T:main-GPC-CM} of the introduction. 
We will repeatedly use Andr\'e's result that the Kuga--Satake correspondence is motivated and defined over a finite field extension\,:

\begin{thm}[Andr\'e {\cite[Prop.~6.4.3]{AndreK3}}]
	\label{T:simpleKS}
	Let $X$ be a hyper-K\"ahler variety defined over a field $K\subseteq \C$. Then there is a finite field extension $K'/K$ such that the  \emph{Kuga--Satake variety} $A$ attached to $X_\C$ is defined over $K'$ and such that the Andr\'e motive $\h^2(X_{K'})(1)$ is a direct summand of $\h^1(A)\otimes \h^1(A)^\vee$. In particular, by \cite[Thm.~0.6.2]{AndreIHES}, if $Y$ is a product of hyper-K\"ahler varieties and of abelian varieties over $\overline \Q$, then Hodge cycles on $Y$ are motivated and the natural inclusion $\mathrm{MT}(Y) \subseteq G_\And(Y)$ is an equality.
\end{thm}

The condition that hyper-K\"ahler varieties have second Betti number larger than 3, which holds for all known deformation families of hyper-K\"ahler varieties, ensures that the deformation space of a hyper-K\"ahler variety is big enough and is crucial in Andr\'e's work~\cite{AndreK3}.

\subsection{First observations} Let $K,L\subseteq \overline \Q$ be two fields.
Via Andr\'e's Theorem~\ref{T:simpleKS}, Corollary~\ref{cor:LdRB-abelian} and Theorem~\ref{T:abelian-dRB} admit the following two consequences\,:

\begin{prop}
	\label{P:BC}
	Let $X$ be a hyper-K\"ahler variety over $K$. Then any $L$-de\,Rham--Betti class on $\h^2(X)(1)$ is an $L$-linear combination of algebraic classes.
\end{prop}
\begin{proof} We first note that the proposition in case $L=\Q$ and $K=\overline \Q$ is \cite[Thm.~5.6]{BostCharles}.
By Theorem~\ref{T:simpleKS}, there exists an abelian variety $A$ over $\overline \Q$ such that the Andr\'e motive  $\h^2(X_{\overline \Q})(1)$ is a direct summand of $\h^1(A)\otimes \h^1(A)^\vee$. 
We may then conclude with Corollary~\ref{cor:LdRB-abelian}.
\end{proof}

\begin{prop}\label{P:redX}
	Let $X$ be a hyper-K\"ahler variety over $\overline \Q$.
	Then  the de\,Rham--Betti group $G_{\dRB}(\h^2(X)\otimes L)$ 
	is reductive.
\end{prop}
\begin{proof}
	By Theorem~\ref{T:simpleKS}, there exists an abelian variety $A$ over $\overline \Q$ such that  
	$\h^2(X)$ is a direct summand of $\h(A)$.
	Hence  $G_{\dRB}(\h^2(X)\otimes L)$  is a quotient
	of $G_\dRB(\h(A)\otimes L)$. It follows from Theorem~\ref{T:abelian-dRB}$(i)$ that both groups are reductive.
\end{proof}

\subsection{De\,Rham--Betti isometries between hyper-K\"ahler varieties are motivated}
For a polarized hyper-K\"ahler variety $X$ over $K$, we denote $\mathfrak{p}^2(X)$ its degree-2 primitive Andr\'e motive and $\mathfrak{t}^2(X)$ its degree-2 transcendental motive. Their de\,Rham--Betti realizations are respectively given by $\mathrm{P}_\dRB^2(X,\Q)$ and $\mathrm{T}_\dRB^2(X,\Q)$. The former is the orthogonal complement of the polarization in  $\HH^2_{\dRB}(X,\Q)$, while the latter is the orthogonal complement of the subspace of $\HH^2_{\dRB}(X,\Q)$ spanned by the classes of divisors defined over $K$.

\begin{thm}\label{P:GdRBisometry}
	Let $X$ and $X'$ be hyper-K\"ahler varieties over $K$ and let $L\subseteq \overline \Q$
	be a subfield. 
	Then 	
\begin{enumerate}[(i)]
		\item any $L$--de\,Rham--Betti isometry 
		$\mathrm{P}^2_\dRB(X,L) \stackrel{\sim}{\longrightarrow} \mathrm{P}^2_\dRB(X',L)$
		is $L$-motivated.
		\item any $L$--de\,Rham--Betti isometry 
		$ \mathrm{T}^2_\dRB(X,L) \stackrel{\sim}{\longrightarrow} \mathrm{T}^2_{\dRB}(X',L)$
				is $L$-motivated.
	  \item	any $L$--de\,Rham--Betti  isometry 
		$\mathrm{H}^2_\dRB(X,L) \stackrel{\sim}{\longrightarrow} \mathrm{H}^2_{\dRB}(X',L)$
				is $L$-motivated.
	\end{enumerate} 
   Here, 
    the isometries are with respect to either the polarization or the Beauville--Bogomolov form 
   (since
    those agree  on $\mathrm{P}^2_B(X,\Q)$, up to a rational multiple).
\end{thm}

\begin{proof} 
By Proposition~\ref{P:BC}, if $X$ is a hyper-K\"ahler variety, then $\mathfrak{t}^2(X)(1)$ does not support any non-zero $\overline \Q$-de\,Rham--Betti class. 
	Hence $(ii) \Rightarrow (iii)$ and it thus suffices to show items $(i)$ and~$(ii)$.
	By Lemma~\ref{L:descent}, we may and do assume that $K=L=\overline \Q$.
 Let $X$ and $X'$ be two hyper-K\"ahler varieties over $\overline \Q$ with respective polarizations $\eta$ and $\eta'$. Let $V:=\mathrm{P}_\B^2(X,\Q(1))$ (resp.\ $V:=\mathrm{T}_\B^2(X,\Q(1))$) and $V':=\mathrm{P}_\B^2(X',\Q(1))$ (resp.\ $V':=\mathrm{T}_\B^2(X',\Q(1))$) be the primitive cohomology (resp.\ transcendental cohomology). 
 Assume that we have a $G_{\dRB,\overline \Q, \overline \Q}$-invariant isometry $$i: V\otimes \overline \Q \stackrel{\sim}{\longrightarrow}  V'\otimes \overline \Q.$$ We will show that $i$ is $\overline{\Q}$-motivated.

Consider the Clifford algebras $\Lambda=C(V)$ and $\Lambda' = C(V')$ and view them respectively as $C^+(V)$- and $C^+(V')$-modules with their natural integral structures, where $C^+(V)$ and $C^+(V')$ are the corresponding even Clifford algebras.
Let $A$ and $A'$ be the corresponding Kuga--Satake varieties, which are defined over $\overline \Q$ by~\cite{AndreK3}.
We have identifications $\Lambda = \HH_\B^1(A,\Q)$ and $\Lambda' = \HH_\B^1(A',\Q)$. 
We fix a $C^+(V\otimes \overline \Q)$-basis $\{1,\alpha\}$ of $\Lambda\otimes \overline \Q$ with $\alpha\in V\otimes \overline \Q$,  
as well as the corresponding basis $\{1,\alpha'\}$ of $\Lambda'\otimes \overline \Q$, where $\alpha'=i(\alpha)\in V'\otimes \overline \Q$. 
This induces 
identifications of $\overline\Q$-algebras
\begin{align*}
C^+\otimes \overline \Q & :=(\End_{C^+(V)}\Lambda)^\op \otimes \overline \Q= \mathrm{M}_{2}(C^+(V\otimes \overline \Q))\quad \mbox{and} \\ 
{C'}^+\otimes \overline \Q & :=(\End_{C^+(V')}\Lambda')^\op \otimes \overline \Q = \mathrm{M}_{2}(C^+(V'\otimes \overline \Q))
\end{align*}
The isometry $i: V\otimes \overline \Q \longrightarrow V'\otimes \overline \Q$ induces $\overline\Q$-algebra isomorphisms 
$$\xymatrix{
 C^+\otimes \overline \Q \ar[rr]^{j_0} \ar@{=}[d]& & {C'}^+\otimes \overline \Q \ar@{=}[d] \\ 
 \mathrm{M}_2(C^+(V\otimes \overline \Q)) \ar[rr]^{\mathrm{M}_2(C^+(i))} && \mathrm{M}_2(C^+(V'\otimes \overline \Q))\,.}
$$
On the other hand, the $G_{\dRB,\overline \Q, \overline \Q}$-invariant isometry $i: V\otimes\overline\Q \longrightarrow V'\otimes\overline\Q$ induces a $G_{\dRB,\overline \Q, \overline \Q}$-invariant algebra isomorphism $C^+(i) : C^+(V\otimes \overline \Q) \longrightarrow C^+(V'\otimes \overline \Q)$. 
 By
\cite[Prop.~6.2.1 and Lem.~6.5.1]{AndreK3}  we obtain a $G_{\dRB,\overline \Q, \overline \Q}$-invariant algebra isomorphism
$$\xymatrix{
 (\End_{C^+}\Lambda)\otimes \overline \Q \ar[rr]^j \ar@{}[d]^{\rotatebox{90}{$\simeq$}} && (\End_{{C'}^+}\Lambda')\otimes \overline \Q \ar@{}[d]^{\rotatebox{90}{$\simeq$}} \\
C^+(V\otimes \overline \Q) \ar[rr]^{C^+(i)}&& C^+(V'\otimes \overline \Q)\,.
}$$
Now, still by \cite{AndreK3}, 
we have a canonical algebra isomorphism
$\End(\Lambda) \simeq (\End_{C^+}\Lambda) \otimes (C^+)^\op$ which is motivated.
We then get a $G_{\dRB,\overline \Q, \overline \Q}$-invariant algebra isomorphism
$$\xymatrix{
	 \End(\Lambda\otimes \overline \Q)  \ar[rr]^J \ar@{}[d]^{\rotatebox{90}{$\simeq$}} && \End(\Lambda'\otimes \overline \Q) \ar@{}[d]^{\rotatebox{90}{$\simeq$}} \\
((\End_{C^+}\Lambda)\otimes \overline \Q) \otimes (C^+\otimes \overline \Q)^\op \ar[rr]^{j\otimes j_0}&&((\End_{{C'}^+}\Lambda')\otimes \overline \Q) \otimes ({C'}^+\otimes \overline \Q)^\op\,,
}$$
 where $C^+\otimes \overline \Q$ and ${C'}^+\otimes \overline \Q$ are viewed as representations of $G_{\dRB,\overline \Q, \overline \Q}$ with trivial action.

 Our aim is to show that $J$ is motivated.
 By Lemma~\ref{lem:Endomorphism} below, there exists an isomorphism
 $\nu: \Lambda\otimes \overline \Q \rightarrow \Lambda'\otimes \overline \Q$ such that
$$
J(f)=\nu\circ f\circ \nu^{-1}
$$
for 
all $f\in \End_{\overline{\Q}}(\Lambda\otimes{ \overline\Q})$. The isomorphism 
$\nu$ is unique up to scaling by a scalar in $\overline\Q^*$.
 
 \begin{lem}\label{lem:Endomorphism}
 Let $W$ and $W'$ be two vector spaces over a field $k$ and let $\Phi: \End_k(W)\rightarrow \End_k(W')$ be an isomorphism of $k$-algebras. Then there exists a $k$-linear isomorphism $\phi: W\rightarrow W'$ such that $\Phi(f) = \phi\circ f\circ \phi^{-1}$ for all $f\in \End_k(W)$. The isomorphism $\phi$ is unique up to a scalar in $k^*$. 
 \end{lem}
 \begin{proof}
 Pick a basis $\{w_1, w_2, \ldots, w_n\}$ of $W$. Let $pr_1\in \End_k(W)$ be the projector onto the subspace generated by $w_1$. Let $pr'_1:=\Phi(pr_1)\in \End_k(W')$, which is nonzero. 
 Pick some $w'\in W'$ such that $w'_1:=pr'_1(w')$ is nonzero. Let $f_i\in\End_k(W)$, $i=2,3,\ldots,n$, be defined by $f_i(w_1)=w_i$ and $f_i(w_j)=0$ for all $j>1$.  Let $f'_i:=\Phi(f_i)$ and set $w'_i := f'_i(w'_1)$. 
 In this way, we get a basis $\{w'_1, w'_2,\ldots, w'_n\}$ of $W'$ and the isomorphism $\phi: W\rightarrow W'$, $\phi(w_i) = w'_i$, satisfies the required condition. 
 If $\phi'$ is another such isomorphism, then $\phi^{-1}\circ \phi'$ sits in the center of $\End_k(W)$ and hence $\phi'=c\phi$ for some $c\in k^*$.
 \end{proof}
 
The condition that $J$ is a $G_{\dRB,\overline \Q, \overline \Q}$-invariant isomorphism of algebras means
$$
\nu\circ g\circ f \circ g^{-1}\circ \nu^{-1} = g\circ \nu \circ f \circ \nu^{-1}\circ g^{-1}, \quad \forall g\in G_{\dRB,\overline \Q, \overline \Q},\; f\in \End(\Lambda\otimes \overline \Q).
$$
As a consequence $\lambda(g):=g^{-1}\circ \nu^{-1}\circ g \circ \nu$ commutes with all $f\in \End(\Lambda\otimes \overline \Q)$ and hence $\lambda(g)\in \overline\Q^*$. 
After choosing a $\overline\Q$-basis for $\Lambda\otimes {\overline \Q}$ and $\Lambda'\otimes {\overline \Q}$, we can write $\lambda(g) = YZY^{-1}Z^{-1}$ with~$Y, Z\in \mathrm{M}_{r\times r}(\overline\Q)$. 
By taking determinant, we see that $\lambda(g)\in\mu_r$. Since by Theorem~\ref{T:connectedness} the $\overline \Q$-de\,Rham--Betti group $G_{\dRB,\overline \Q, \overline \Q}$  
is connected, 
we conclude that $\lambda(g)=1$ for all $g\in G_{\dRB,\overline \Q, \overline \Q}$. As a consequence $\nu$ is $G_{\dRB,\overline \Q, \overline \Q}$-invariant.
It follows from Theorem~\ref{T:dRB1A-barQ} that $\nu$ is a $\overline \Q$-linear combination of classes of algebraic cycles. As a consequence, $J$ is $\overline \Q$-motivated. Note that the Kuga--Satake correspondence, by the definition of $J$, fits into the following commutative diagram
\[
\xymatrix{
V\otimes \overline \Q \ar[rr]^{i}\ar[d] &&V'\otimes \overline \Q \ar[d]\\
\End(\Lambda\otimes \overline \Q)\ar[rr]^{J} &&\End(\Lambda'\otimes \overline \Q)
}
\]
We conclude from
the fact that the Kuga--Satake correspondence $V \to \End(\Lambda)$ is motivated 
\cite{AndreK3} 
and from the semi-simplicity
that $i: V\otimes \overline \Q \rightarrow V'\otimes \overline \Q$ is $\overline \Q$-motivated.
\end{proof}

\begin{cor}
	If $X$ and $X'$ are hyper-K\"ahler varieties over $K$ of K3${[n]}$-deformation type, 
	then any de\,Rham--Betti isometry 
	$\HH^2_\dRB(X,\Q) \stackrel{\sim}{\longrightarrow} \HH^2_\dRB(X',\Q)$
 is algebraic.
\end{cor}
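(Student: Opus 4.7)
The starting point is Theorem~\ref{T:dRBisometry}$(iii)$: any de\,Rham--Betti isometry $f\colon\HH^2_\dRB(X,\Q)\stackrel{\sim}{\longrightarrow}\HH^2_\dRB(X',\Q)$ is motivated, so in particular its Betti realization $f_\B\colon\HH^2(X_\C,\Q)\stackrel{\sim}{\longrightarrow}\HH^2(X'_\C,\Q)$ is a rational Hodge isometry with respect to the Beauville--Bogomolov form. Thus the corollary reduces to the following algebraicity statement: for two hyper-K\"ahler varieties of K3$^{[n]}$-deformation type over $\C$, every rational Hodge isometry between their second cohomology groups is the class of an algebraic cycle. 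I would invoke this result from the literature rather than reprove it.

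For $n=1$, this is the theorem of Mukai, completed in full generality by Buskin, asserting that every rational Hodge isometry between the $\HH^2$'s of two complex K3 surfaces is algebraic. For $n\ge 2$, the corresponding statement has been established via Markman's work on the Torelli theorem for K3$^{[n]}$-type hyper-K\"ahler varieties and the structure of their second cohomology as a Mukai-type lattice: after composing with suitable monodromy operators (each of which is itself algebraic, being realized by universal parallel transport or tautologically by divisors and correspondences coming from moduli spaces of sheaves), the given Hodge isometry is induced by a birational map or by a correspondence coming from a moduli problem, hence algebraic.

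Finally, one descends from $\C$ to $K$. Since $X$ and $X'$ are defined over $K\subseteq\overline{\Q}$, and the subspace of $\HH^4(X\times X',\Q(2))$ generated by cycle classes of algebraic cycles over $\overline{\Q}$ is a countable $\Q$-subspace whose image in $\HH^4((X\times X')_\C,\Q(2))$ coincides (by spreading-out and specialization) with the $\Q$-span of cycle classes of algebraic cycles over $\C$, the algebraic cycle furnished above descends to $\overline{\Q}$, and being de\,Rham--Betti it is $\mathrm{Gal}(\overline{\Q}/K)$-invariant and hence descends further to $K$. The main obstacle is the K3$^{[n]}$-type case for $n\ge 2$ of the Mukai--Buskin-type algebraicity statement; everything else is a formal consequence of Theorem~\ref{T:dRBisometry}$(iii)$ and standard descent.
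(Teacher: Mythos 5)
Your proof follows essentially the same path as the paper's: reduce via Theorem~\ref{T:dRBisometry}$(iii)$ to the algebraicity of rational Hodge isometries between $\HH^2$'s of K3$^{[n]}$-type varieties, then invoke Markman (and Buskin for $n=1$). The paper cites Markman's algebraicity-of-Hodge-isometries theorem directly and leaves the descent to $K$ implicit as a standard specialization/trace argument; your extra paragraph spelling out the descent is correct in spirit, and your paraphrase of how Markman's proof proceeds is somewhat inaccurate (it conflates his Torelli program with his later Hodge-isometry paper) but harmless since you are citing rather than reproving it.
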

\begin{proof} By Markman~\cite{Markman-K3n}, any Hodge isometry $\HH^2_\B(X_\C^\an,\Q) \stackrel{\sim}{\longrightarrow} \HH^2_\B((X')_\C^\an,\Q)$ is algebraic. (The case where $X$ and $X'$ are K3 surfaces is due to Buskin~\cite{Buskin}).
The corollary then follows directly from Theorem~\ref{P:GdRBisometry}$(iii)$.
\end{proof}

\subsection{A global de\,Rham--Betti Torelli theorem for K3 surfaces over $\overline \Q$}
We derive from Theorem~\ref{P:GdRBisometry} the following result of independent interest\,:

\begin{thm}[A global de\,Rham--Betti Torelli theorem for K3 surfaces over $\overline \Q$]
	\label{T2:dRB-Torelli}
	Let $S$ and $S'$ be two K3 surfaces over $\overline{\Q}$. If there is an integral de\,Rham--Betti isometry
	$$i: \HH^2_\dRB(S,\Z) \stackrel{\sim}{\longrightarrow} \HH^2_\dRB(S',\Z),$$
	\emph{i.e.}, an isometry $i: \HH^2_\B(S_\C^\an,\Z) \stackrel{\sim}{\longrightarrow} \HH^2_\B(S'^\an_\C,\Z)$ that becomes de\,Rham--Betti after base-change to~$\Q$,
	then $S$ and $S'$ are isomorphic.
\end{thm}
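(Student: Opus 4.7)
The plan is to reduce to the classical global Torelli theorem for complex K3 surfaces via Theorem~\ref{T:dRBisometry}$(iii)$ and then descend from $\C$ to $\overline \Q$.

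First, I would tensor the integral de\,Rham--Betti isometry $i$ with $\Q$ to obtain a $\Q$-de\,Rham--Betti isometry
$$i_\Q : \HH^2_\dRB(S, \Q) \stackrel{\sim}{\longrightarrow} \HH^2_\dRB(S', \Q).$$
By Theorem~\ref{T:dRBisometry}$(iii)$, $i_\Q$ is motivated; in particular its Betti realization
$$i_\B : \HH^2_\B(S_\C^\an, \Q) \stackrel{\sim}{\longrightarrow} \HH^2_\B((S'_\C)^\an, \Q)$$
is an isometry of $\Q$-Hodge structures. Since $i$ is integral on the Betti side by hypothesis, $i_\B$ restricts to an integral Hodge isometry between $\HH^2_\B(S_\C^\an, \Z)$ and $\HH^2_\B((S'_\C)^\an, \Z)$.

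Next, I would invoke the classical global Torelli theorem of Piateski-Shapiro--Shafarevich (in the effective form due to Burns--Rapoport). Since $-\mathrm{id}$ and the reflections in $(-2)$-classes of $\mathrm{NS}(S'_\C)$ are integral Hodge isometries of $\HH^2_\B((S'_\C)^\an,\Z)$, after post-composing $i_\B$ with a suitable such automorphism of the target I may assume the positive cone of $S_\C$ is mapped to that of $S'_\C$ and that some ample class of $S_\C$ is sent to an ample class of $S'_\C$. The resulting effective integral Hodge isometry is then induced by an isomorphism $f : S'_\C \stackrel{\sim}{\longrightarrow} S_\C$ of complex K3 surfaces.

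Finally, I would descend from $\C$ to $\overline \Q$: the scheme $\mathrm{Isom}_{\overline \Q}(S', S)$ is of finite type over $\overline \Q$ and admits a $\C$-valued point by the previous step; since $\overline \Q$ is algebraically closed, Hilbert's Nullstellensatz then produces a $\overline \Q$-valued point, yielding an isomorphism $S' \cong S$ over $\overline \Q$. No step of this plan presents a serious obstacle: the transcendence input is entirely packaged in Theorem~\ref{T:dRBisometry}$(iii)$, while the remaining steps are a standard application of the classical Torelli theorem together with a Nullstellensatz descent argument; the only place requiring minor care is the Weyl group reduction to an effective Hodge isometry, which is routine.
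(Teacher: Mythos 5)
Your proof is correct, and it takes a mildly different organizational route from the paper's: you apply Theorem~\ref{T:dRBisometry}$(iii)$ directly to the full degree-2 isometry, whereas the paper first shows $\eta' := i(\eta)$ is algebraic (Proposition~\ref{P:BC}), then composes $i$ with $-\mathrm{id}$ and reflections in $(-2)$-classes (noting these are algebraic de\,Rham--Betti isometries since $(-2)$-classes are represented by rational curves) so as to make $\eta'$ ample, and only then applies Theorem~\ref{T:dRBisometry}$(i)$ to the restriction of $i$ to primitive cohomology. The two approaches are logically equivalent once one knows $(ii)\Rightarrow(iii)$ in Theorem~\ref{T:dRBisometry}, and carry the same transcendence content; yours is slightly more streamlined in that the Weyl-group manipulation is deferred to the purely Hodge-theoretic step where it is classical, and one never needs to observe separately that $(-2)$-reflections respect the de\,Rham--Betti structure. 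You also spell out the $\C\to\overline\Q$ descent via the $\mathrm{Isom}$ scheme, which the paper leaves implicit.
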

\begin{proof}
	Let $\eta$ be an ample divisor class on $S$. 
	Then $\eta'=i(\eta)$ is a de\,Rham--Betti class and hence algebraic by \cite[Thm.~5.6]{BostCharles}\,; see also Proposition \ref{P:BC}. Thus either $\eta'$ or $-\eta'$ is a positive class on $S'$. 
	Without loss of generality, we assume that $\eta'$ is a positive class. 
	Then $\eta'$ can be moved to an ample class by a series of reflections along ($-2$)-classes\,; see \cite[Ch.~8, Cor.~2.9]{HuybrechtsK3book}. 
	Such reflections are all algebraic isometries of $\HH^2_\dRB(S',\Z)$ since each ($-2$)-class is represented by a rational curve on $S'$. By composing $i$ with these reflections, we may assume that $\eta'$ is an ample class. Thus $i$ restricts to a de\,Rham--Betti isometry
	$$
	i': \mathrm{P}^2_\dRB(S,\Z) \rightarrow \mathrm{P}^2_{\dRB}(S',\Z),
	$$
	where $\mathrm{P}^2_\dRB(S,\Z)$ (resp.\ $\mathrm{P}^2_\dRB(S',\Z)$) is the orthogonal complement of $\eta$ (resp.\ $\eta'$). 
	It follows from Theorem \ref{P:GdRBisometry} that $i' \otimes 1_\Q$ is motivated and hence respects Hodge structures. 
	As a consequence, $i$ is a Hodge isometry and the usual Torelli theorem for K3 surfaces provides an isomorphism $S_\C \simeq S'_\C$, from which we obtain an isomorphism $S\simeq S'$.
\end{proof}

\subsection{Codimension-2 de\,Rham--Betti classes on hyper-K\"ahler varieties}
Recall  that for all Andr\'e motives $M$ over $\overline \Q$,  $G_\And(M)$ is reductive and that
we have $G_{\dRB}(M\otimes {\overline \Q}) \subseteq G_{\dRB}(M)_{\overline \Q} \subseteq G_\And(M)_{\overline \Q}$. 
Recall also from  Proposition~\ref{P:redX} that $G_{\dRB}(\h^2(X)\otimes \overline \Q)$ is reductive if $X$ is a hyper-K\"ahler variety over $\overline \Q$.
The following proposition is analogous to \cite[Lem.~7.4.1]{AndreK3}.

\begin{prop}\label{P:dRB-tensor2}
	Let $X$ be a hyper-K\"ahler variety over ${\overline \Q}$. Then the inclusion 
	$$\End_{G_{\And}(\h^2(X))} (\HH^2_\B(X^\an_\C,{\overline \Q})) \subseteq \End_{G_{\dRB}(\h^2(X)\otimes \overline \Q)} (\HH^2_\B(X^\an_\C,{\overline \Q}))$$ 
	is an equality.
Equivalently, any ${\overline \Q}$-de\,Rham--Betti class on $\h^2(X)(1) \otimes \h^2(X)(1)$ is ${\overline \Q}$-motivated. 
In particular, if $X$ is  a hyper-K\"ahler variety over $K$, then any $L$-de\,Rham--Betti class on $\h^2(X)(1) \otimes \h^2(X)(1)$ is $L$-motivated. 
\end{prop}
\begin{proof} 
	The two statements in the proposition are equivalent  since any choice of polarization on~$X$ induces an isomorphism of Andr\'e motives $\h^2(X)(1)^\vee \simeq \h^2(X)(1)$, while the statement about $L$-de\,Rham--Betti classes follows from Lemma~\ref{L:descent}.

Since $G_\And(\h^2(X))$ and $G_{\dRB}(\h^2(X)\otimes \overline \Q)$ are reductive, it is enough
to show that any simple $G_{\dRB,\overline \Q, \overline \Q}$-submodule $T$ of $\HH^2_\B(X^\an_\C,{\overline \Q})$ is a  $(G_{\And})_{\overline \Q}$-submodule.
Since the intersection form on $\HH^2_\B(X^\an_\C,\Q)$ is motivated,
$T\cap T^\perp$ is a $G_{\dRB,\overline \Q, \overline \Q}$-submodule of $T$ and it is therefore either $\{0\}$ or~$T$.

If $T\cap T^\perp = \{0\}$, then  $\HH^2_\B(X^\an_\C,\overline \Q) = T\oplus^\perp T^\perp$ and $(-\mathrm{id}_T, \mathrm{id}_{T^\perp})$ 
defines a $G_{\dRB,\overline \Q, \overline \Q}$-equivariant isometry of $\HH^2_\B(X^\an_\C,\overline \Q)$. By Theorem~\ref{P:GdRBisometry} 
it is motivated and hence $T$ is a $(G_{\And})_{\overline \Q}$-submodule.

If $T\cap T^\perp = T$, i.e. if $T$ is totally isotropic, then choose a  $G_{\dRB,\overline \Q, \overline \Q}$-submodule $T'$ of $\HH^2_\B(X^\an_\C,\overline \Q)$ such that $\HH^2_\B(X^\an_\C,\overline \Q) = T^\perp \oplus T'$. 
The restriction of the quadratic form to the  $G_{\dRB,\overline \Q, \overline \Q}$-submodule $T\oplus T'$ is then nondegenerate. 
Let $\psi : T\oplus T' \to T^\vee \oplus T'^\vee$ be the induced $G_{\dRB,\overline \Q, \overline \Q}$-equivariant map and let $\psi_T : T' \to T^\vee$ and $\psi_{T'} : T' \to T'^\vee$ be the induced  $G_{\dRB,\overline \Q, \overline \Q}$-equivariant maps. 
Since $T$ is totally isotropic, the map $\psi_T$ is an isomorphism. Define the $G_{\dRB,\overline \Q, \overline \Q}$-submodule $T'' := \mathrm{im}\big(-\frac{1}{2}(\psi_T^{-1})^\vee\circ \psi_{T'} \oplus \mathrm{id}_{T'} : T' \to T \oplus T' \big)$. 
Then $T\oplus T' = T\oplus T''$ is a decomposition of the nondegenerate quadratic space $T\oplus T'$ into $G_{\dRB,\overline \Q, \overline \Q}$-submodules such that both $T$ and $T''$ are totally isotropic.
It follows that $(2\mathrm{id}_T, \frac{1}{2}\mathrm{id}_{T''}, \mathrm{id}_{(T\oplus T'')^\perp})$ defines a $G_{\dRB,\overline \Q, \overline \Q}$-equivariant isometry of $\HH^2_\B(X^\an_\C,\overline \Q)$. By Theorem~\ref{P:GdRBisometry} it is ${\overline \Q}$-motivated and hence $T$ is a $G_{\And,{\overline \Q}}$-submodule.
\end{proof}

\begin{thm}
	\label{T:dRB-cod2}
	Let $X$ be a hyper-K\"ahler variety over $K$ of known deformation type and let $n$ be a positive integer. Then any $L$-de\,Rham--Betti class 
on $\h^4(X^n)(2)$ is $L$-motivated.
\end{thm}
\begin{proof}
By Lemma~\ref{L:descent}, 
we can assume $K={\overline \Q}$. By~\cite[Prop.~2.35(ii) \& Prop.~2.38(i)]{GKLR}, for any hyper-K\"ahler variety $X$,
$\HH^4_\B(X^\an_\C,\Q)$ is a sub-Hodge structure of $\big(\HH_\B^2(X^\an_\C,\Q)\otimes \HH_\B^2(X^\an_\C,\Q) \oplus \HH_\B^2(X^\an_\C,\Q(-1)) \oplus \Q(-2) \big)^{\oplus N}$ for some~$N$. 
Since $\HH_{\B}^1(X_\C^\an,\Q)=0$ and since $\Q(-1)$ is a direct summand of $\HH_\B^2(X^\an_\C,\Q) $, we see that for all $n\geq 1$,
 $\HH^4_\B((X^\an_\C)^n,\Q)$ is a sub-Hodge structure of $\big(\HH_\B^2(X^\an_\C,\Q)\otimes \HH_\B^2(X^\an_\C,\Q)\big)^{\oplus N}$ for some~$N$. 
 From \cite[Cor.~1.17]{FFZ} and \cite[Cor.~1.2]{Soldatenkov}, if $X$ is of known deformation type, any Hodge class on a power of $X_\C$ is motivated\,;
it follows that the Andr\'e motive $\h^4(X^n)$ is a direct summand of $(\h^2(X)\otimes \h^2(X))^{\oplus N}$. 
 Now, having proved in Proposition~\ref{P:dRB-tensor2} that any ${\overline \Q}$-de\,Rham--Betti class in $\h^2(X)(1)\otimes \h^2(X)(1)$ is ${\overline \Q}$-motivated, we deduce that any ${\overline \Q}$-de\,Rham--Betti class in $\h^4(X^n)(2)$ is ${\overline \Q}$-motivated.
\end{proof}

\begin{cor}
Let $X$ be a hyper-K\"ahler fourfold over $K$ of known deformation type and let $k$ be a non-negative integer. Then any $L$-de\,Rham--Betti class on $\h^{2k}(X)(k)$ is $L$-motivated.
\end{cor}
\begin{proof}
From Proposition~\ref{P:BC} and Theorem~\ref{T:dRB-cod2}, it remains to see that any $L$-de\,Rham--Betti class 
on $\h^6(X)(3)$
is $L$-motivated. This follows from Proposition~\ref{P:BC} together with the Lefschetz isomorphism of Andr\'e motives $\h^2(X)(1) \simeq \h^6(X)(3)$.
\end{proof}

\subsection{The de\,Rham--Betti group of a hyper-K\"ahler variety.}
	Let $X$ be a hyper-K\"ahler variety over $\overline\Q$ and denote by $\mathfrak t^2(X)$ its transcendental motive in degree 2. By Zarhin~\cite{Zarhin}, the endomorphism algebra $E:=\End(\mathfrak t^2(X))$ is a field of degree dividing $\dim_{\Q}  \mathrm{T}^2_\B(X,\Q)$. 
	We say that $X$ has complex multiplication if $[E:\Q]= \dim_\Q \mathrm{T}^2_\B(X,\Q)$ (this implies $E$ is a CM field).  
 With what we have established so far, we have the following analogue of Theorem~\ref{T:abelian-dRB}.

\begin{thm}\label{T:hK-dRB}
	Let $X/\overline\Q$ be a hyper-K\"ahler variety of positive dimension. The de\,Rham--Betti group $G_\dRB(\h^2(X)\otimes L)$ has the following properties.
	\begin{enumerate}[(i)]
		\item $G_{\dRB}(\h^2(X)\otimes L)$ is a connected reductive subgroup of $\mathrm{MT}(\h^2(X))_L$.
		\item $\End_{G_{\dRB}(\h^2(X)\otimes L)}(\HH^2_{\B}(X,L)) = \End_{\operatorname{MT}(\h^2(X))}(\HH^2_{\B}(X,\Q))\otimes L$.
		\item $\det\colon G_{\dRB}(\h^2(X)\otimes L) \rightarrow \mathds{G}_m$ is surjective.
		\item $X$ has complex multiplication if and only if $G_\dRB(\h^2(X)\otimes L)$ is a torus. 
	\end{enumerate}
\end{thm}
\begin{proof}
	That $G_{\dRB}(\h^2(X)\otimes L)$ lies in  $\mathrm{MT}(\h^2(X))_L$ follows from the inclusion  $G_{\dRB}(\h^2(X)\otimes L) \subseteq (G_{\dRB}(\h^2(X)))_L$  and from Andr\'e's Theorem \ref{T:simpleKS} stating that 
	the inclusion $\mathrm{MT}(\h^2(X)) \subseteq G_\And(\h^2(X))$ is an equality. 
	That $G_{\dRB}(\h^2(X)\otimes L)$ is connected is Theorem~\ref{T:connectedness} and that it is reductive is Proposition~\ref{P:redX}.
	Statement $(ii)$ is Proposition~\ref{P:dRB-tensor2}. Regarding $(iii)$,  the image of $\det$ is connected. 
	Assume it is trivial. Then $G_\dRB(\h^2(X)\otimes L)$ acts
	trivially on $\det(\HH^2_\B(X, L))$. But, by Andr\'e's Theorem \ref{T:simpleKS}, $\det \h^{2}(X) \simeq \mathds{1}(-\dim \HH^2_\B(X,\Q))$ as Andr\'e motives. 
	This is a contradiction since $2\pi i$ is transcendental.
	For $(iv)$, suppose that the transcendental motive $\mathfrak t^2(X)$ has complex multiplication. 
	Since $G_{\dRB}(\h^2(X)\otimes L)$ is a reductive and connected subgroup of  $\mathrm{MT}(\h^2(X))_L$, which is a torus, it has to be a torus.
	Conversely, assume that $G_{\dRB}(\h^2(X)\otimes L)$ is a torus. 
Similarly to the case of abelian varieties, if
	$G_{\dRB}(\h^2(X)\otimes L)$ is contained in a maximal torus $T \subseteq \operatorname{GL}(\mathrm{T}^2_\B(X,L))$, 
	then $$\End_{T}(\mathrm{T}^2_{\B}(X,L)) \subseteq \End_{G_{\dRB}(\h^2(X)\otimes L)}(\mathrm{T}^2_{\B}(X,L)) =\End_{\operatorname{MT}(\h^2(X))_L}(\mathrm{T}^2_{\B}(X,L)) .$$
	But $\End_{T}(\mathrm{T}^2_{\B}(X,L)) $ is a commutative $L$-algebra of dimension $\dim_L \mathrm{T}^2_{\B}(X,L)$, as can be seen after extending scalars to an algebraically closed field. It follows that $X$ has complex multiplication.
\end{proof}

\begin{cor}\label{C:T2}
	Let $X$ be a hyper-K\"ahler variety over $\overline \Q$ and let $k\in \Z$.
	Every de\,Rham--Betti class on $\mathfrak{t}^2(X)(k)$ is zero. In particular, if $X$ is a hyper-K\"ahler variety over $K$ and $k$ is an integer, then
	every de\,Rham--Betti class on $\mathfrak{h}^2(X)(k)$ is algebraic.
\end{cor}
\begin{proof}
	It follows from Theorem \ref{T:hK-dRB}$(ii)$ that 
	$$ \End_{G_{\dRB}(\h^2(X))}(\mathrm{T}_\B^{2}(X,\Q)) = \End_{\mathrm{MT}(\h^2(X))}(\mathrm{T}_\B^{2}(X,\Q)).$$
	In particular, since the Hodge structure $\mathrm{T}_\B^{2}(X,\Q)$ is irreducible, the de\,Rham--Betti object $\mathrm{T}^{2}_\dRB(X,\Q)$ is simple.
	Since $\dim \mathrm{T}^2_\B(X,\Q) \geq 2$, we get 
	 $\Hom_{\dRB}(\mathds 1(-k), \mathrm{T}^{2}_\dRB(X,\Q)) = 0,$
	 for all $k\in \Z$.
\end{proof}

\begin{rmk}
	We do not know how to prove that any $\overline \Q$-de\,Rham--Betti class on $\mathfrak{t}^2(X)(k)$ is zero for $X$ a hyper-K\"ahler variety over $\overline \Q$.
\end{rmk}

\subsection{The de\,Rham--Betti conjecture and hyper-K\"ahler varieties of large Picard rank} 
Let us introduce the following terminology\,:

\begin{defn}
	Let $X$ be a smooth projective variety over a subfield of $\C$.
	\begin{itemize}
		\item The \emph{Picard rank} of $X$ is $\rho(X) =_{\mathrm{def}} \rk (\CH^1(X_\C) \to \HH^2_\B(X_\C^\an,\Q(1)))$.
		\item The \emph{Picard corank} of $X$ is $\rho^c(X) =_{\mathrm{def}} \h^{1,1}(X_\C^\an) - \rho(X)$, where $\h^{1,1}(X_\C^\an) = \dim_\C \HH^{1}(X_\C^\an,\Omega_{X_\C^\an}^1)$.
	\end{itemize}
\end{defn}

We address the de\,Rham--Betti conjecture (Conjectures~\ref{C:dRB}) for the degree two motive  (possibly with $\overline \Q$-coefficients) of hyper-K\"ahler varieties of Picard corank $\leq 2$ and prove  Theorem~\ref{T:main-GPC-CM}.

	\begin{thm}\label{T:main-GPC-maxPic}
	Let $X$ be a hyper-K\"ahler variety over $K$.
	\begin{enumerate}[(i)]
		\item 	If $\rho^c(X)=0$, then $Z_{\h^2(X)} = \Omega^\And_{\h^2(X)}$.
		\item If $\rho^c(X)=1$, then $\Omega_{\h^2(X)} = \Omega^\And_{\h^2(X)}.$
		\item If $\rho^c(X)=2$, then $\Omega^\dRB_{\h^2(X)} = \Omega^\And_{\h^2(X)}.$
	\end{enumerate}
	Assume further that $X$ is of known deformation type.
	\begin{enumerate}[(i')]
		\item 	If $\rho^c(X)=0$, then $Z_{X} = \Omega^\And_{X}$.
		\item If $\rho^c(X)=1$, then $\Omega_{X} = \Omega^\And_{X}.$
		\item If $\rho^c(X)=2$, then $\Omega^\dRB_{X} = \Omega^\And_{X}.$
	\end{enumerate}
\end{thm}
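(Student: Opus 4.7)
The plan is to treat the six statements by first reducing to $K=\overline{\Q}$ and to the case of $\h^2(X)$, then proving (i), (ii), (iii) in turn via structural constraints on the de\,Rham--Betti group of $\h^2(X)$, and finally deducing the primed versions.

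\smallskip

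First I would reduce to $K=\overline\Q$ using Lemmas~\ref{L:torsor-descent} and~\ref{L:Qbar-torsor-descent}, together with an analogous descent for~$Z_M$ obtained as follows: for $M \in \langle N\rangle$ the quotient morphism $\Omega_N^{\And}\twoheadrightarrow \Omega_M^{\And}$ sends~$c_N$ to $c_M$, so it maps $Z_N$ onto a constructible dense subset of $Z_M$. Next, under the hypothesis that $X$ is of known deformation type, the implications $(i)\Rightarrow(i')$, $(ii)\Rightarrow(ii')$, $(iii)\Rightarrow(iii')$ would follow from Proposition~\ref{P:directfactor} (extended to $Z$ by the same surjectivity argument) provided that $\h(X)\in \langle \h^2(X)\rangle$ in the category of Andr\'e motives. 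The latter inclusion is a consequence of the fact that, by the Looijenga--Lunts--Verbitsky theory (cf.~\cite[Prop.~2.35]{GKLR}), $\HH^{2k}_{\B}(X^{\an}_\C,\Q)$ sits as a sub-Hodge structure in a tensor space of $\HH^{2}_{\B}(X^{\an}_\C,\Q)$, combined with \cite[Cor.~1.17]{FFZ}--\cite[Cor.~1.2]{Soldatenkov} ensuring that Hodge classes on powers of $X$ are motivated, so that the LLV splittings descend to idempotents of Andr\'e motives.

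\smallskip

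For statement (i), where $\rho^c(X)=0$ and hence $\dim_\Q \mathrm{T}^2(X,\Q)=2$ (of pure type $(-1,1)+(1,-1)$), the transcendental motive $\mathfrak t^2(X)(1)$ has CM by an imaginary quadratic field $E$. Kuga--Satake theory together with Theorem~\ref{T:simpleKS} exhibits an elliptic curve with CM by~$E$, call it $E_0/\overline\Q$, such that $\mathfrak t^2(X)(1)$ is a direct summand of $\h(E_0)$ in $\mathsf M^\And_{\overline\Q}$. The Grothendieck period conjecture for CM elliptic curves, via Chudnovsky's theorem (as recalled in \S\ref{SS:GPC2}), yields $Z_{\h(E_0)}=\Omega^\And_{\h(E_0)}$. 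The direct summand argument then gives $Z_{\mathfrak t^2(X)}=\Omega^\And_{\mathfrak t^2(X)}$, and since $\h^2(X)/\mathfrak t^2(X)\cong \mathds 1(-1)^{\oplus \rho(X)}$ over $\overline\Q$, we conclude $Z_{\h^2(X)}=\Omega^\And_{\h^2(X)}$.

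\smallskip

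For statements (ii) and (iii), I would analyze $\mathrm{MT}(\mathfrak t^2(X))$ using the Zarhin classification. When $\rho^c(X)=1$ we have $\dim \mathrm{T}^2=3$, and dimension constraints force $\End(\mathfrak t^2(X))=\Q$ and $\mathrm{MT}(\mathfrak t^2(X))=\mathrm{SO}(\mathrm{T}^2,\mathfrak B)$, a form of $\mathrm{SO}(3)$. By Theorem~\ref{T:hK-dRB}$(i',ii',iii')$ the group $G_{\overline\Q-\dRB}(\h^2(X))$ is a connected reductive $\overline\Q$-subgroup of $\mathrm{MT}(\h^2(X))_{\overline\Q}$ with the same commutant $\overline\Q$ on $\mathrm{T}^2\otimes\overline\Q$ and with surjective determinant. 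A direct inspection of $\mathrm{SO}(3,\overline\Q)\cong \mathrm{PGL}_2(\overline\Q)$ shows that the only connected reductive subgroup whose commutant on the standard $3$-dimensional representation is $\overline\Q$ is the full group, from which $\Omega_{\h^2(X)}=\Omega^\And_{\h^2(X)}$ follows. When $\rho^c(X)=2$ we have $\dim \mathrm{T}^2=4$, and Zarhin's classification splits into four cases for $E=\End(\mathfrak t^2(X))$: $E=\Q$ with $\mathrm{MT}=\mathrm{SO}(4)$; $E$ real quadratic with $\mathrm{MT}=\mathrm{Res}_{E/\Q}\mathrm{SO}(2)$ a $2$-dimensional $\Q$-torus; $E$ imaginary quadratic with $\mathrm{MT}$ a unitary group $\mathrm{U}_E(\mathrm{T}^2,h)$ of dimension~$4$; and $E$ a CM field of degree~$4$ (so $X$ has CM) with $\mathrm{MT}$ a $2$-dimensional torus. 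In each case, Theorem~\ref{T:hK-dRB}$(i,ii,iii)$ forces $G_\dRB(\h^2(X))$ to be a connected reductive $\Q$-subgroup of $\mathrm{MT}(\h^2(X))$ with matching commutant on $\HH^2_\B(X,\Q)$ and surjective determinant. In the generic case, an application of Goursat's lemma to $\mathrm{Spin}(4)=\mathrm{SL}_2\times \mathrm{SL}_2$ (as in the proof of Theorem~\ref{T:dRB-elliptic}) rules out proper subgroups acting absolutely irreducibly. In the two non-CM torus/unitary cases, $\overline\Q$-character analysis combined with the surjectivity of the determinant pins down $G_{\dRB,\overline\Q}=\mathrm{MT}_{\overline\Q}$, which descends to $G_\dRB=\mathrm{MT}$ as $\Q$-subgroups.

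\smallskip

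The main obstacle is the degree-$4$ CM case of (iii): there $\mathrm{MT}$ is a $2$-dimensional $\Q$-torus whose base change admits proper $1$-dimensional subtori with the same $\overline\Q$-commutant on $\mathrm{T}^2\otimes\overline\Q$, so Theorem~\ref{T:hK-dRB}(ii) alone is insufficient. Here I expect the argument to proceed by using the Kuga--Satake embedding $\mathfrak t^2(X)\hookrightarrow \h(A)$ into a CM abelian fourfold~$A$ furnished by Theorem~\ref{T:simpleKS}, combined with the structure theorem for CM abelian varieties: whenever $A$ decomposes, up to isogeny, into a product of CM abelian varieties of dimension $\le 2$ with non-trivial endomorphism ring, Theorems~\ref{T:dRB-elliptic}(i) and~\ref{T:abelian-surfaces}(i) yield $\Omega^\dRB_A=\Omega^\mot_A$, and the direct summand argument concludes. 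The residual case of a simple CM abelian fourfold (a primitive CM type) needs to be excluded by noting that the Clifford CM structure on~$A$ inherited from $\mathfrak t^2(X)$ forces a non-primitive CM type, so $A$ is isogenous to a square of a CM abelian surface and Theorem~\ref{T:abelian-surfaces}(i) applies.
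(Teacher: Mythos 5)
Your proposal captures the general shape of the result but has several genuine gaps.

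\textbf{The claim $\h(X)\in\langle\h^2(X)\rangle$ is false in general.}\ The Tannakian category $\langle\h^2(X)\rangle$ contains only motives of even weight (any subquotient of $\bigoplus \h^2(X)^{\otimes n}\otimes(\h^2(X)^\vee)^{\otimes m}$ has even weight). Generalized Kummer varieties and the O'Grady sixfold have nontrivial $\HH^3$, so $\h^3(X)\notin\langle\h^2(X)\rangle$ and the hypothesis underlying your deduction of $(i')$--$(iii')$ from $(i)$--$(iii)$ via Proposition~\ref{P:directfactor} fails. The paper instead only uses that the surjection $G_\And(X)\twoheadrightarrow G_\And(\h^2(X))$ is an isogeny of \emph{connected} groups (by \cite[Thm.~1.11, Cor.~1.16 \& Prop.~6.4]{FFZ} and Theorem~\ref{T:simpleKS}), and then a dimension count together with the connectedness of $G_\dRB(X)$ (Theorem~\ref{T:connectedness}) forces equality of the subgroup with the ambient group. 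This works without the false inclusion.

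\textbf{The reduction of $(i)$ to $K=\overline\Q$ is not a direct-summand argument.}\ What you write about $M\in\langle N\rangle$ and the map $\Omega^\And_N\twoheadrightarrow\Omega^\And_M$ is a direct-summand reduction, which you do need later, but it is not the descent from $K$ to $\overline\Q$. For $Z_{\h^2(X)}=\Omega^\And_{\h^2(X)}$ to follow from the corresponding statement over $\overline\Q$, one must first show that $\Omega^\And_{\h^2(X)}$ (as a $K$-scheme) is irreducible; the paper does this by noting that $G_\And(\h^2(X_{\overline\Q}))=\operatorname{MT}(\h^2(X_{\overline\Q}))$ is connected (Theorem~\ref{T:simpleKS}) and then using a Galois argument on the set of geometric components, as in the proof of Lemma~\ref{L:Qbar-torsor-descent}. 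Your proposal skips this step, which is essential since $Z_{\h^2(X)}$ is irreducible by construction but $\Omega^\And_{\h^2(X)}$ might a priori not be.

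\textbf{The degree-$4$ CM case of $(iii)$ is not handled.}\ You identify this as the main obstacle, correctly so, but your proposed route through a structure theorem for the Kuga--Satake fourfold --- in particular the assertion that it must be isogenous to the square of a CM abelian surface --- is not justified and is not what the paper does. The paper resolves this case directly by analyzing the Galois action on $X_*(\operatorname{Res}_{F/\Q}\mathrm{U}_1(F))_\Q$, splitting into the subcase where $E$ has no imaginary quadratic subfield (then $\operatorname{MT}$ has no proper $\Q$-subtorus and Proposition~\ref{P:dRB-tensor2} suffices) and the subcase where $E$ is biquadratic (then the two candidate $\Q$-subtori $\mathrm{U}_1(K)$, $K\subset E$ imaginary quadratic, are ruled out by the endomorphism constraint of Theorem~\ref{T:hK-dRB}$(ii)$). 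No knowledge of the Kuga--Satake variety's isogeny decomposition is needed. Also, the real quadratic case you list does not occur for $K3$-type Hodge structures: a commutative Hodge group forces $\h^2(X)$ to have CM, giving $E$ a CM field of degree $4$, not a real quadratic field.

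\textbf{What your proposal does differently and validly.}\ Your approach to $(ii)$ --- arguing directly inside $\mathrm{SO}_3\cong\mathrm{PGL}_2$ that, given reductivity, connectedness, surjective determinant and the commutant constraint from Theorem~\ref{T:hK-dRB}, the subgroup must be everything --- is a genuine alternative to the paper's route, which instead realizes $\mathfrak t^2(X)$ as a direct summand of $\h(A\times A)$ for $A$ the (dimension-$2$) Kuga--Satake abelian surface and invokes Theorems~\ref{T:dRB-elliptic} and~\ref{T:abelian-surfaces}. The paper's route yields the stronger equality $\Omega_{\h^2(X)}=\Omega^\mot_{\h^2(X)}$ (via the homological motives of abelian surfaces) and reuses already-established abelian-variety results, while your argument is self-contained but stops at the motivated torsor.
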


\begin{proof}
Regarding $(i)$\,:
By Andr\'e's Theorem~\ref{T:simpleKS}, $G_\And(\h^2(X_{\overline \Q}))$ agrees with the Mumford--Tate group and hence is connected. By a Galois argument as in the proof of Lemma~\ref{L:torsor-descent},
we deduce that $\Omega^\And_{\h^2(X)}$ is connected. Therefore it suffices to show statement $(i)$ in the case $K=\overline \Q$.
Note that the Kuga--Satake variety associated with $X$ of maximal Picard rank is a CM elliptic curve, so that we have an isomorphism of Andr\'e motives  
$$\h^2(X) \simeq \big( \h^1(E)\otimes \h^1(E)\big) \oplus \mathds{1}(-1)^{\oplus b_2-4}.$$
To conclude it suffices to see that $\dim Z_{\h^2(X)} \geq 2$. This
follows at once from Chudnovsky's theorem~\cite{Chudnovsky} that the transcendence degree of periods of elliptic curves is at least 2. 
\smallskip

Regarding $(ii)$\,: By
Lemma~\ref{L:torsor-descent}, we may and do assume that $K=\overline \Q$.
 Note that if the Picard corank of $X$ is at most 1, then the Kuga--Satake variety $A$ associated with the transcendental motive $\mathfrak{t}^2(X)$ is either a CM elliptic curve or an abelian surface of Picard rank 3. 
Since $\mathfrak t^2(X)$ is  a direct summand of $\h(A\times A)$, we can conclude from Proposition~\ref{P:directfactor} together with Theorems~\ref{T:dRB-elliptic} and~\ref{T:abelian-surfaces}.
\smallskip

Regarding $(iii)$\,: 
By Lemma~\ref{L:torsor-descent}, we may and do assume that $K=\overline \Q$.
Let $E=\End(\mathfrak t^2(X))$. By Zarhin~\cite{Zarhin}, $E$ is a field and there are three possibilities for the Mumford--Tate group of $\h^2(X)(1)$\,: it is either $\mathrm{SO}_4$, or  $\mathrm{U}_2(E)$ for the CM quadratic extension $E/\Q$,  or	$\mathrm{Res}_{F/\Q}\mathrm{U}_1(F)$ for the CM quartic extension $E/F/\Q$, where $F$ is a real quadratic extension.
(Note that the case $\mathrm{Res}_{E/\Q}\mathrm{SO}_2$ for a real quadratic extension $E/\Q$ does not occur since in that case the  Hodge group of $\h^2(X)$ is commutative and hence $\h^2(X)$ would be CM.)

First we assume that $\operatorname{MT}(\h^2(X)(1)) = \operatorname{SO}_{4,\Q}$. In that case, we actually show the stronger statement that the inclusion $G_{\dRB}(\h^2(X)(1)\otimes \overline \Q) \subseteq G_\And(\h^2(X)(1))_{\overline \Q}  =  \operatorname{SO}_{4,\overline \Q}$ is an equality. 
Let $\mathfrak g$ be the Lie algebra of $G_{\dRB}(\h^2(X)(1)\otimes \overline \Q)$. 
We then have an inclusion 
$$\mathfrak g \subseteq \mathfrak{so}_{4,\overline \Q} = \mathfrak{sl}_{2,\overline \Q} \times \mathfrak{sl}_{2,\overline \Q},$$
 where  
the $\mathfrak g$-module structure  on $\mathrm{T}^2_\B(X,\overline \Q(1))$ is induced from the $\mathfrak{sl}_{2,\overline \Q} \times \mathfrak{sl}_{2,\overline \Q}$-module decomposition $\mathrm{T}^2_\B(X,\overline \Q(1)) = V_1 \otimes V_2$, where $V_i$ is the standard representation of the $i$-th factor of $\mathfrak{sl}_{2,\overline \Q} \times \mathfrak{sl}_{2,\overline \Q}$.
Now, by using Theorem~\ref{T:hK-dRB} instead of Theorem~\ref{T:abelian-dRB}, we may argue exactly as in the proof of Theorem~\ref{T:abelian-surfaces}$(ii)$ in the case of simple abelian surfaces with endomorphism algebra given by a real quadratic extension of $\Q$ to show that the inclusion $\mathfrak g \subseteq \mathfrak{so}_{4,\overline \Q} = \mathfrak{sl}_{2,\overline \Q} \times \mathfrak{sl}_{2,\overline \Q}$ is an equality.

Suppose $\mathrm{MT}(\h^2(X)(1)) = \mathrm{U}_2(E)$ for a CM quadratic extension $E$ of $\Q$, and let $\mathfrak{mt}$ denote the Lie algebra of $\mathrm{MT}(\h^2(X)(1))$.
Then $\mathfrak{mt}_{\overline \Q} = \mathfrak{gl}_{2, \overline \Q} = \mathfrak{sl}_{2,\overline \Q} \oplus \mathfrak c$ and the embedding
\begin{equation}\label{e:action} \mathfrak{mt}_{\overline \Q} \subset \mathfrak{sl}_{2, \overline \Q} \oplus \mathfrak{sl}_{2, \overline \Q} \end{equation} 
corresponding to the embedding $\mathrm{MT}(\h^2(X)(1)) \subset \mathrm{SO}_4$ is given by mapping $\mathfrak{sl}_{2, \overline \Q}$ to one of the factors and $\mathfrak{c}$ to the Lie algebra of a maximal torus in $\mathrm{SL}_{2,\overline \Q}$ in the other factor.
Here the action of $\mathfrak{sl}_{2,\overline \Q} \oplus \mathfrak{sl}_{2,\overline \Q}$ on $\mathrm{T}_\B^2(1)\otimes \overline \Q$ is via the tensor product of two copies of the standard representation.
Let $\mathfrak{g} \subseteq \mathfrak{sl}_{2,\overline \Q}\oplus \mathfrak{c}$ denote the Lie algebra of $G_{\overline \Q-\dRB}(\h^2(X)(1))$.
If $\mathfrak{g}$ was abelian, then $G_{\dRB}(\h^2(X)(1)\otimes \overline \Q)$ would be a torus, contradicting by Theorem~\ref{T:hK-dRB} the fact that $\End(\h^2(X)(1))=E$.
Hence $\mathfrak{g}$ has to contain~$\mathfrak{sl}_{2,\overline \Q}$.
We have to exclude the case $\mathfrak{g} = \mathfrak{sl}_{2,\overline \Q}$.
In this case the embedding (\ref{e:action}) shows that $\End_{G_{\dRB}(\mathfrak{h}^2(X)(1)\otimes \overline \Q)}(\mathrm{T}_\B^2(X,\overline \Q(1))$ has dimension~$4$,
 which contradicts Theorem~\ref{T:hK-dRB}$(ii')$.

Suppose $\mathrm{MT}(\h^2(X)(1)) = \mathrm{Res}_{F/\Q}\mathrm{U}_1(F)$ for a CM quartic extension $E/\Q$, where $F\subset E$ denotes the real quadratic subfield.
We distinguish two cases\,: 
if $E$ does not contain an imaginary quadratic subfield, then the proof of Theorem \ref{T:abelian-surfaces}$(i)$ in case $(d)$ shows that $\mathrm{MT}(\h^2(X)(1))$ does not contain any non-trivial subtori defined over $\Q$.
Since $G_{\dRB}(\h^2(X)(1))$ cannot be trivial by Proposition~\ref{P:dRB-tensor2}, we conclude that $G_{\dRB}(\h^2(X)(1)) = \mathrm{MT}(\h^2(X)(1))$.
Suppose now that $E$ contains an imaginary quadratic subfield.
Then $E$ is a biquadratic field, and hence $\Gal(E/\Q) = \Z/2\Z \times \Z/2\Z$.
As we have seen in the proof of Theorem \ref{T:abelian-surfaces}$(i)$ in case $(d)$,
the cocharacter lattice of $\mathrm{Res}_{F/\Q}\mathrm{U}_{1}(F)$ is given by
$X_*(\mathrm{Res}_{F/\Q}\mathrm{U}_{1}(F))_{\Q} = \langle \phi_1^{\vee} - \bar{\phi}_1^{\vee}, \phi_2^{\vee} - \bar{\phi}_2^{\vee} \rangle_{\Q}.$
Here $\Sigma_E := \Hom(E, \C) = \{\phi_1, \bar{\phi}_1, \phi_2, \bar{\phi}_2\}$ is the set of embeddings of the CM field $E$ into the complex numbers.
One computes that $X_*(\mathrm{Res}_{F/\Q}\mathrm{U}_{1}(F))_{\Q}$ has two one-dimensional Galois-stable subspaces, generated by $\phi_1^{\vee} + \phi_2^{\vee} - (\bar{\phi}_1^{\vee} + \bar{\phi}_2^{\vee})$ and $\phi_1^{\vee} - \phi_2^{\vee} - (\bar{\phi}_1^{\vee} - \bar{\phi}_2^{\vee})$, respectively.
The corresponding $\Q$-subtori of rank $1$ are precisely the tori of the form $\mathrm{U}_{1}(K)$ for $K \subset E$ an imaginary quadratic subfield.
We have to exclude the possibility that $G_{\dRB}(\h^2(X)(1)) = \mathrm{U}_{1}(K)$.
If this is the case, then $\End_{K}(\mathrm{T}^2_\B(X,\Q)) \subset \End_{G_{\dRB}(\h^2(X)(1))}(\mathrm{H}^2_\B(X,\Q))$.
Note that $\End_{K}(\mathrm{T}^2_\B(X,\Q))$ is of dimension~$4$ over~$K$, and hence of dimension~$8$ over $\Q$. This contradicts Theorem~\ref{T:hK-dRB}$(ii)$.

Now that we have established in all cases that $G_{\dRB}(\h^2(X)(1)) = \mathrm{MT}(\h^2(X)(1))$, let us prove $G_{\dRB}(\h^2(X))= \mathrm{MT}(\h^2(X))$.
We first show that $\mathrm{MT}(\h^2(X)(1)) \subset G_{\dRB}(\h^2(X))$. Namely, since we know that both groups are reductive, we can argue as follows\,:
Suppose $v \in \HH_\B^2(X)^{\otimes a} \otimes (\HH_\B^2(X)^\vee)^{ \otimes b}$ is fixed by $G_{\dRB}(\h^2(X))$.
Since $$\HH_\B^2(X)(1)^{\otimes a} \otimes (\HH_\B^2(X)(1)^\vee)^{ \otimes b} = \HH_\B^2(X)^{\otimes a} \otimes (\HH_\B^2(X)^\vee)^{ \otimes b} \otimes \Q(a-b),$$
we see that the one-dimensional subspace spanned by $v$ is preserved by $G_{\dRB}(\h^2(X)(1))$.
The equality $G_{\dRB}(\h^2(X)(1)) = \mathrm{MT}(\h^2(X)(1))$ then shows that this subspace is a one-dimensional sub-Hodge structure of $\HH_\B^2(X)(1)^{\otimes a} \otimes (\HH_\B^2(X)(1)^\vee)^{ \otimes b}$.
It therefore has to be of weight zero, and consequently $v$ is fixed by $\mathrm{MT}(\h^2(X)(1))$.
Since $\mathrm{MT}(\h^2(X))$ is generated by the scalars $\mathds{G}_m$ and $\mathrm{MT}(\h^2(X)(1))$,
and we know from Theorem~\ref{T:hK-dRB} that $G_{\dRB}(\mathfrak{h}^2(X))$ surjects onto the determinant, we conclude that $G_{\dRB}(\mathfrak{h}^2(X))= \mathrm{MT}(\mathfrak{h}^2(X))$.
\medskip

Finally, we show how to derive $(i')$, $(ii')$ and $(iii')$ from $(i)$, $(ii)$ and $(iii)$, respectively. As before, we may and do assume that $K=\overline \Q$.
Since $\h^2(X)$ is a direct summand of $\h(X)$ we have a commutative diagram with surjective horizontal arrows
$$\xymatrix{
	G_\And(X)  \ar@{->>}[r] & G_\And(\h^2(X))  \\
	\mathrm{MT}(X) \ar@{^(->}[u] \ar@{->>}[r] & \mathrm{MT}(\h^2(X))  \ar@{^(->}[u]
 }$$
By Theorem~\ref{T:simpleKS}, the right inclusion is an equality, while the assumption that $X$ is of known deformation type provides by \cite[Thm.~1.11 \& Cor.~1.16]{FFZ} that the left inclusion is an equality. 
On the other hand, by \cite[Prop.~6.4]{FFZ} the bottom horizontal arrow has finite kernel.

In particular, $G_\And(X)$ is connected and $\dim G_\And(X)  = \dim G_\And(\h^2(X))$. Since clearly $\dim Z_X \geq \dim Z_{\h^2(X)}$, we get the implication $(i)\Rightarrow (i')$.

On the other hand, we have commutative diagrams with surjective horizontal arrows
$$\xymatrix{G_\dRB(X) \ar@{^(->}[d] \ar@{->>}[r] & G_\dRB(\h^2(X))  \ar@{^(->}[d] 
	&& G_{\dRB}(\h(X)\otimes \overline \Q) \ar@{^(->}[d] \ar@{->>}[r] & G_{\dRB}(\h^2(X)\otimes \overline \Q)  \ar@{^(->}[d] \\
	G_\And(X)  \ar@{->>}[r] & G_\And(\h^2(X)) 
	&&	G_\And(X)_{\overline \Q}  \ar@{->>}[r] & G_\And(\h^2(X))_{\overline \Q} 
}$$
 and, as explained above, the surjection $	G_\And(X) \twoheadrightarrow G_\And(\h^2(X)) $ is an isogeny of connected algebraic groups. 
Therefore, if the inclusion $G_\dRB(\h^2(X)) \hookrightarrow G_\And(\h^2(X)) $ (resp.\ the inclusion  $G_\dRB(\h^2(X)\otimes \overline \Q) \hookrightarrow G_\And(\h^2(X))_{\overline \Q}$) is an equality, then the inclusion 
 $G_\dRB(X) \subseteq G_{\And}(X)$ (resp.\ the inclusion 
 $G_{\dRB}(\h(X)\otimes \overline \Q) \subseteq G_{\And}(X)_{\overline \Q}$) is an equality. 
 This establishes the implication $(iii)\Rightarrow (iii')$ (resp.\ the implication $(ii) \Rightarrow (ii')$).
\end{proof}

\begin{rmk} 
	Let $X$ be a hyper-K\"ahler variety over $K$. Assume that $X$ does not have CM and that  $\rho^c(X)=2$.
	The arguments of the proof of Theorem~\ref{T:main-GPC-maxPic} actually show that  $\Omega_{\h^2(X)} = \Omega^\And_{\h^2(X)}$. Moreover, if $X$ is of known deformation type, then $\Omega_{X} = \Omega^\And_{X}$.
\end{rmk}

Using the Shioda--Inose structure on K3 surface of Picard corank $\leq 1$ and the validity of the Hodge conjecture for powers of abelian surfaces, we obtain\,:

\begin{cor}\label{C:K3-2} 
	Let $S$ be a K3 surface over $K$.
	\begin{enumerate}[(i)]
		\item If $\rho^c(S)=0$, then $Z_S = \Omega^\mot_S$.
		\item If $\rho^c(S)=1$, then $\Omega_S = \Omega^\mot_S$.
	\end{enumerate}
	In particular, in both cases, for any $n\geq 0$ and any $k\in \Z$, any $\overline \Q$-de\,Rham--Betti class on $\h(S^n)(k)$ is a $\overline \Q$-linear combination of algebraic classes.
\end{cor}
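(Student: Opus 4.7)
The plan is to combine Theorem~\ref{T:main-GPC-maxPic}(i') and (ii') (applicable since K3 surfaces are hyper-K\"ahler varieties of known deformation type) with the Shioda--Inose construction, in order to upgrade the equalities $Z_S = \Omega^\And_S$ and $\Omega_S = \Omega^\And_S$ provided by that theorem to the corresponding equalities with $\Omega^\mot_S$. The final $\overline \Q$-de\,Rham--Betti algebraicity statement will then follow from Proposition~\ref{P:Qbar-torsor-fullyfaithful}.

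For the upgrade from $\Omega^\And$ to $\Omega^\mot$, it suffices to show that every motivated cycle on an arbitrary power of $\h(S)$ is algebraic, and this can be checked after base-change to $\overline \Q$. By the Shioda--Inose theorem in the form extended by Morrison, for any K3 surface over $\overline \Q$ with $\rho(S) \geq 19$ there exists an abelian surface $A/\overline \Q$ together with an algebraic correspondence---obtained from $S_{\overline \Q}$ via a rational double cover onto the Kummer surface $\mathrm{Km}(A)$---inducing an isomorphism of Andr\'e motives $\mathfrak{t}^2(S_{\overline \Q}) \simeq \mathfrak{t}^2(A)$. Combined with the K3 decomposition $\h(S_{\overline \Q}) \simeq \mathds 1 \oplus \mathds 1(-1)^{\oplus \rho(S)} \oplus \mathfrak{t}^2(S_{\overline \Q}) \oplus \mathds 1(-2)$, this shows that the Tannakian subcategory $\langle \h(S_{\overline \Q})\rangle$ of $\mathsf{M}^\And_{\overline \Q}$ is contained in the subcategory generated by $\h(A)$ and the Tate motives.

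Moreover, one may always choose $A$ to be isogenous to $E \times E$ for some elliptic curve $E/\overline \Q$: when $\rho(S)=20$, $E$ may be taken to be CM, while when $\rho(S)=19$ one has $\rho(A)=3$, and among all abelian surfaces realising the prescribed rank-$3$ transcendental Hodge structure one may choose $A \sim E \times E$ with $E$ not necessarily CM. By Proposition~\ref{P:hodge-elliptic}, every Hodge class on every power of $A$ is then algebraic, and combined with Andr\'e's Theorem~\ref{T:Andre-abelian} (Hodge = motivated on abelian varieties) this implies that motivated and algebraic classes coincide on every tensor space of $\h(A)$, hence on every tensor space of $\h(S_{\overline \Q})$. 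This yields the desired equality $\Omega^\And_S = \Omega^\mot_S$.

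Combining the above identification with Theorem~\ref{T:main-GPC-maxPic}(i'), (ii'), we obtain $Z_S = \Omega^\mot_S$ in case~(i) and $\Omega_S = \Omega^\mot_S$ in case~(ii). Both conclusions imply in particular $\Omega_S = \Omega^\And_S$, so Proposition~\ref{P:Qbar-torsor-fullyfaithful} applies (its reductivity hypothesis being furnished by Proposition~\ref{P:redX}), and we deduce that every $\overline \Q$-de\,Rham--Betti class on any tensor space of $\h(S)$ is $\overline \Q$-motivated, hence $\overline \Q$-algebraic by the preceding paragraph. Since $\h(S^n)(k)$ is a direct summand of a tensor space of $\h(S)$ for every $n$ and $k$, the final assertion follows. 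The one technically delicate step is the motivic lift of Shioda--Inose, but since Morrison's construction proceeds via explicit algebraic correspondences, this lift is automatic.
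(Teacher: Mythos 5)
Your overall strategy mirrors the paper's: apply Theorem~\ref{T:main-GPC-maxPic} to get $Z_S = \Omega^\And_S$ (resp.\ $\Omega_S = \Omega^\And_S$), then upgrade $\Omega^\And_S$ to $\Omega^\mot_S$ via a Shioda--Inose reduction to an abelian surface, and finish with Proposition~\ref{P:Qbar-torsor-fullyfaithful}. The structural skeleton is right.

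However, there is a genuine gap in the middle step. You claim that when $\rho(S)=19$ ``among all abelian surfaces realising the prescribed rank-$3$ transcendental Hodge structure one may choose $A \sim E \times E$.'' This is false: the Shioda--Inose abelian surface $A$ is \emph{determined up to isogeny} by the transcendental Hodge structure $T_S$, and for a given $S$ of Picard rank $19$ this $A$ can perfectly well be a simple abelian surface with quaternionic multiplication (which also has Picard rank $3$). The Hodge structures $\mathrm{Sym}^2\HH^1(E)$ for $E$ non-CM and $\mathrm{T}^2(A)$ for $A$ quaternionic have non-isomorphic Hodge groups (different $\Q$-forms of $\mathrm{SL}_2$), so one cannot swap one for the other. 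Consequently your appeal to Proposition~\ref{P:hodge-elliptic} does not cover all cases, and the conclusion that motivated classes on powers of $\h(S_{\overline\Q})$ are algebraic is not established. The paper avoids this by invoking the Hodge conjecture for powers of \emph{arbitrary} complex abelian surfaces (Tankeev~\cite{Tankeev}, which in particular treats the simple quaternionic case), rather than restricting to products of elliptic curves. Replacing your Proposition~\ref{P:hodge-elliptic} step by that general input for abelian surfaces repairs the argument; the rest of your reasoning (the motivic nature of the Shioda--Inose correspondence, the descent to $\overline\Q$, and the final appeal to Proposition~\ref{P:Qbar-torsor-fullyfaithful}) is sound.
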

\begin{proof} On the one hand,
Theorem~\ref{T:main-GPC-maxPic} provides $Z_S = \Omega^\And_S$ and $\Omega_{S} = \Omega^\And_{S}$ if $\rho^c(S)=0$ and $\rho^c(S)=1$, respectively. We are thus left to show that  $\Omega^\And_{S} = \Omega^\mot_{S}$ if $\rho^c(S)\leq 1$. Since the standard conjectures hold for surfaces in characteristic zero, we have to show that motivated classes on powers of $S$ are algebraic. For that purpose, we may and do assume that $K=\overline \Q$.
If $S$ has Picard corank $\leq 1$, then it has a Shioda--Inose structure and its transcendental (homological) motive is thus isomorphic to the transcendental motive of an abelian surface over $\overline \Q$. 
Since Hodge classes on powers of complex abelian surfaces are algebraic \cite{Tankeev}, it follows that motivated classes on powers of $S$ are algebraic. 
\end{proof}

\begin{rmk}
	Corollary~\ref{C:K3-2}$(i)$ is also established in
\cite{Kawabe} in the case of Kummer surfaces associated with squares of CM elliptic curves.
\end{rmk}


\bibliographystyle{amsalpha}
\bibliography{bib}

\end{document}